\documentclass[10pt]{amsart}

\usepackage{amssymb}
\usepackage{amsthm}
\usepackage{amsmath}

\usepackage[usenames]{color}

\usepackage[foot]{amsaddr}

\usepackage{hyperref}

\theoremstyle{plain}
\newtheorem{proposition}{Proposition}[section]
\newtheorem{theorem}[proposition]{Theorem}
\newtheorem{lemma}[proposition]{Lemma}
\newtheorem{corollary}[proposition]{Corollary}
\theoremstyle{definition}
\newtheorem{example}[proposition]{Example}
\newtheorem{definition}[proposition]{Definition}
\newtheorem{observation}[proposition]{Observation}
\theoremstyle{remark}
\newtheorem{remark}[proposition]{Remark}
\newtheorem{conjecture}[proposition]{Conjecture}

\DeclareMathOperator{\Aut}{Aut}

\DeclareMathOperator{\Id}{Id}

\DeclareMathOperator{\Hol}{Hol}

\DeclareMathOperator{\Euc}{Euc} 
\DeclareMathOperator{\id}{id}

\DeclareMathOperator{\Cc}{\mathcal{C}}

\DeclareMathOperator{\Hc}{\mathcal{H}}

\DeclareMathOperator{\Oc}{\mathcal{O}}
\DeclareMathOperator{\Pc}{\mathcal{P}}

\DeclareMathOperator{\Bb}{\mathbb{B}}
\DeclareMathOperator{\Cb}{\mathbb{C}}
\DeclareMathOperator{\Db}{\mathbb{D}}

\DeclareMathOperator{\Nb}{\mathbb{N}}

\DeclareMathOperator{\Rb}{\mathbb{R}}

\newcommand{\abs}[1]{\left|#1\right|}

\newcommand{\norm}[1]{\left\|#1\right\|}
\newcommand{\wt}[1]{\widetilde{#1}}
\newcommand{\wh}[1]{\widehat{#1}}
\newcommand{\ip}[1]{\left\langle #1\right\rangle}


\begin{document}

\title{Smoothly bounded domains covering compact manifolds}
\author{Andrew Zimmer}\address{Department of Mathematics, Louisiana State University, Baton Rouge, LA 70803}
\email{amzimmer@lsu.edu}
\date{\today}

\begin{abstract} We show that if a bounded domain in complex Euclidean space with $\Cc^{1,1}$ boundary covers a compact manifold, then the domain is biholomorphic to the unit ball.
\end{abstract}

\maketitle

\section{Introduction}

Given a domain $\Omega \subset \Cb^d$ let $\Aut(\Omega)$ denote the biholomorphism group of $\Omega$. When $\Omega$ is bounded, H. Cartan proved that $\Aut(\Omega)$ is a Lie group (with possibly infinitely many connected components) and acts properly on $\Omega$. 

We say that a domain $\Omega \subset \Cb^d$ \emph{covers a compact manifold} if there exists a discrete group $\Gamma \leq \Aut(\Omega)$ such that $\Gamma$ acts freely, properly discontinuously, and co-compactly on $\Omega$. The simplest example of a domain which covers a compact manifold is the unit ball $\Bb_d \subset \Cb^d$. In this case, $\Aut(\Bb_d)$ is isomorphic to the matrix group ${ \rm PU}(1,d)$ and any co-compact torsion free lattice $\Gamma \leq \Aut(\Bb_d)$ acts freely, properly discontinuously, and co-compactly on $\Bb_d$. 

In this paper we prove that, up to biholomorphism, the unit ball is the only domain covering a compact manifold with $\Cc^{1,1}$ boundary. 
 
\begin{theorem}\label{thm:main} Suppose $\Omega \subset \Cb^d$ is a bounded domain which covers a compact manifold. If $\partial \Omega$ is $\Cc^{1,1}$, then $\Omega$ is biholomorphic to the unit ball. 
\end{theorem}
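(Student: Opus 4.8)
The plan is to combine a Wong--Rosay type rescaling argument at a boundary point with the global rigidity forced by the co-compact action, using the squeezing function to transfer information between the interior and the boundary. First I would exploit co-compactness to show that every boundary point is an orbit accumulation point. Fix a compact set $K \subset \Omega$ with $\Gamma \cdot K = \Omega$ and a point $q$ in its interior. Given any $\xi \in \partial \Omega$, choose $p_n \to \xi$ and write $p_n = \gamma_n(q_n)$ with $q_n \in K$; passing to a subsequence so that $q_n \to s \in K$ and using the $\Gamma$-invariance of the Kobayashi distance, one checks that $\gamma_n(q) \to \xi$ as well. Thus for \emph{any} prescribed $\xi \in \partial\Omega$ there are automorphisms $\gamma_n \in \Gamma$ with $\gamma_n(q) \to \xi$, which is exactly the orbit-accumulation input needed for the scaling method.

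Next I would record the two structural consequences of the hypotheses. Since the squeezing function $s_\Omega$ is a biholomorphic invariant, it is $\Gamma$-invariant and hence descends to the compact quotient $\Omega/\Gamma$; therefore $s_\Omega \geq s_0 > 0$ on all of $\Omega$, i.e. $\Omega$ is uniformly squeezing. In particular $\Omega$ is pseudoconvex and complete Kobayashi hyperbolic, and the Kobayashi, Carath\'eodory and Bergman metrics are uniformly comparable. Separately, the $\Cc^{1,1}$ hypothesis is equivalent to a uniform two-sided ball condition: at every boundary point there are interior and exterior Euclidean balls of a fixed radius, and the inner unit normal is Lipschitz. Because a $\Cc^{1,1}$ defining function is twice differentiable almost everywhere, and because every boundary point is an orbit accumulation point by the previous step, I am free to choose $\xi$ to be a point of twice differentiability, so that the Levi form at $\xi$ is defined.

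The heart of the argument is a rescaling at $\xi$. As $\Omega$ is hyperbolic, the family $\{\gamma_n\}$ is normal, but $\gamma_n(q)\to\xi\in\partial\Omega$ forces the limit to be the constant $\xi$; the automorphisms escape to the boundary. To extract geometry I would normalize $\xi$ to the origin with the inner normal along a fixed axis and compose with a sequence of affine dilations $A_n$ adapted to the ball condition (parabolic/anisotropic scaling). The two-sided ball condition sandwiches the rescaled domains $A_n(\Omega)$ between rescaled interior and exterior balls, so they exhaust a limit model $\Omega_\infty$, and $A_n\circ\gamma_n$ converges — by the uniform metric comparisons and completeness, which power the normal-families estimates — to a biholomorphism $F \colon \Omega \to \Omega_\infty$. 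Identifying $\Omega_\infty$ with $\Bb_d$ then yields $\Omega \cong \Bb_d$.

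The main obstacle is the weak boundary regularity. With only $\Cc^{1,1}$ control the Levi form exists merely almost everywhere, $\xi$ need not be a $\Cc^2$ point, and the scaling must be driven purely by the two-sided ball condition rather than by a second-order Taylor expansion of a defining function. The delicate point is to prove that the limit model $\Omega_\infty$ is strongly pseudoconvex — equivalently, that the Levi form at the chosen $\xi$ is strictly positive and not degenerate — so that $\Omega_\infty$ is biholomorphic to $\Bb_d$. This is precisely where uniform squeezing is indispensable: a degenerate or flat limit model would force the squeezing function of $\Omega$ to drop below $s_0$ along the orbit, contradicting its $\Gamma$-invariance and the bound $s_\Omega \geq s_0$. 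Once $\Omega_\infty \cong \Bb_d$ is established, the biholomorphism $F$ completes the proof.
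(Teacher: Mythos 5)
Your setup is fine as far as it goes: co-compactness does make every boundary point an orbit accumulation point, the two-sided ball condition coming from the $\Cc^{1,1}$ hypothesis does let you run an anisotropic scaling, and the invariance of the squeezing function does give a uniform lower bound $s_\Omega \geq s_0 > 0$. But the argument breaks at exactly the step you flag as ``delicate.'' The two-sided ball condition only sandwiches the rescaled domains between an interior ball of radius $r$ and an exterior ball of radius $1$, so the limit model is pinned only between two \emph{different} paraboloid models, $\Pc_\alpha \subset \Omega_\infty \subset \Pc_\beta$ with $\beta < \alpha$; it is not identified as a quadric, let alone as $\Bb_d$. Your proposed mechanism for closing this gap --- that a degenerate or non-strongly-pseudoconvex limit would force $s_\Omega$ to drop below $s_0$ --- is not valid. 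Uniform squeezing passes to scaling limits but does not force the limit to be a ball: the bidisk, or any bounded homogeneous domain, is uniformly squeezing with a fixed constant, yet is a ``flat'' model. So nothing in your argument rules out a limit domain that is uniformly squeezing but not biholomorphic to $\Bb_d$. (A secondary soft spot: choosing $\xi$ to be an Alexandrov point of twice differentiability gives a pointwise second-order expansion at $\xi$ only, which is not enough to control the boundary near the re-centering points of the Pinchuk scaling; but this is repairable, whereas the identification of the limit is not.)

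This is precisely the obstruction the paper is organized around: with only $\Cc^{1,1}$ regularity one cannot localize at a strongly pseudoconvex point, so the scaling (Proposition~\ref{prop:rescaling}) is used \emph{not} to produce a ball but only to produce the sandwich $\Pc_\alpha \subset D \subset \Pc_\beta$ together with a one-parameter group $u_t \leq \Aut(D)$, hence non-discreteness of $\Aut(\Omega)$. The proof then leaves local analysis entirely: Frankel--Nadel (Theorem~\ref{thm:Frankel}), applied to the compact quotient, forces $\Omega$ to be a bounded symmetric domain (after ruling out the product case using the geometry of $D$), and finally Theorem~\ref{thm:BSD_reg} shows a bounded symmetric domain with $\Cc^{1,1}$ boundary has rank one, by comparing the Bergman kernel of $D$ along the slice $\Hc_D$ with the Faraut--Kor\'anyi formula along the diagonal of a maximal polydisk and applying the maximum principle to the Jacobian. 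You would need an input of comparable strength to replace these global steps; the squeezing function alone will not do it.
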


A bounded domain $\Omega \subset \Cb^d$ is called \emph{symmetric} if $\Aut(\Omega)$  is a semisimple Lie group which acts transitively on $\Omega$. A theorem of Borel~\cite{B1963} says that every bounded symmetric domain covers a compact manifold and so we have the following corollary. 

\begin{corollary}\label{cor:BSD} Suppose $\Omega \subset \Cb^d$ is a bounded symmetric domain and $\partial \Omega$ is $\Cc^{1,1}$, then $\Omega$ is biholomorphic to the unit ball. \end{corollary}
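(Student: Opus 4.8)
The plan is to obtain Corollary~\ref{cor:BSD} as an immediate consequence of Theorem~\ref{thm:main}; the only point requiring comment is that the covering hypothesis of that theorem is automatic for a bounded symmetric domain. By definition $G := \Aut(\Omega)$ is a semisimple Lie group acting transitively on $\Omega$, so fixing a base point identifies $\Omega$ with $G/K$ for a compact subgroup $K$. First I would invoke Borel's theorem~\cite{B1963} to produce a co-compact lattice $\Gamma_0 \leq G$, and then apply Selberg's lemma to pass to a finite-index subgroup $\Gamma \leq \Gamma_0$ that is torsion free.

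Next I would verify that this $\Gamma$ realizes $\Omega$ as a cover of a compact manifold in the precise sense of the introduction. Since $\Gamma$ is discrete and, by H.~Cartan's theorem, $\Aut(\Omega)$ acts properly on $\Omega$, the induced $\Gamma$-action is properly discontinuous; in particular every point stabilizer is finite, so torsion-freeness forces all stabilizers to be trivial and the action to be free. Co-compactness of $\Omega/\Gamma$ is inherited from co-compactness of $\Gamma_0$ in $G$ (the quotient $\Omega/\Gamma_0$ is a continuous image of the compact space $\Gamma_0 \backslash G$, and $\Omega/\Gamma$ is a finite cover of it). Hence $\Gamma$ acts freely, properly discontinuously, and co-compactly, and $\Omega$ covers the compact manifold $\Omega/\Gamma$.

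With the hypothesis in place the conclusion is immediate: because $\partial\Omega$ is $\Cc^{1,1}$, Theorem~\ref{thm:main} applies directly and shows that $\Omega$ is biholomorphic to the unit ball. I expect no genuine obstacle at this stage---all of the analytic and geometric content resides in Theorem~\ref{thm:main}. The only care required is bookkeeping: matching Borel's statement that a symmetric domain ``covers a compact manifold'' to the free, properly discontinuous, co-compact formulation used here, and checking that the passage to a torsion-free finite-index subgroup preserves co-compactness.
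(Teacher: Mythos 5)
Your proposal is correct and follows exactly the route the paper takes: the paper derives Corollary~\ref{cor:BSD} by citing Borel's theorem that every bounded symmetric domain covers a compact manifold and then applying Theorem~\ref{thm:main}. Your additional bookkeeping (Selberg's lemma for torsion-freeness, properness of the $\Aut(\Omega)$-action giving free and properly discontinuous, co-compactness descending from $\Gamma_0 \backslash G$) is accurate and simply makes explicit what the paper leaves to the citation.
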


Theorem~\ref{thm:main} extends a classical result of Rosay and Wong from the 1970's.

\begin{theorem}[Rosay~\cite{R1979}, Wong~\cite{W1977}]\label{thm:WR} Suppose $\Omega \subset \Cb^d$ is a bounded domain which covers a compact manifold. If $\partial \Omega$ is $\Cc^{2}$, then $\Omega$ is biholomorphic to the unit ball. 
\end{theorem}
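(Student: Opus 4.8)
The plan is to use the cocompact action to produce an orbit that accumulates, in a controlled manner, at a strongly pseudoconvex boundary point, and then to run the Wong--Pinchuk scaling method. Fix a base point $p\in\Omega$ and let $d_\Omega$ denote the Kobayashi distance, which is $\Aut(\Omega)$-invariant. Since $\Gamma$ acts cocompactly there is a compact set $K\subset\Omega$ with $\Gamma\cdot K=\Omega$, and invariance of $d_\Omega$ then gives a uniform covering radius $R:=\max_{k\in K}d_\Omega(k,p)<\infty$, so that every point of $\Omega$ lies within $d_\Omega$-distance $R$ of the orbit $\Gamma\cdot p$. A preliminary observation is that $\partial\Omega$ must contain a strongly pseudoconvex point: if $\xi\in\partial\Omega$ maximizes the Euclidean distance to $p$, then $\overline{\Omega}$ is contained in the closed ball tangent to $\partial\Omega$ at $\xi$, which forces the Levi form of $\partial\Omega$ to be positive definite at $\xi$.

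Next I would show that the orbit accumulates precisely at this $\xi$. Choosing interior points $q_n\to\xi$ and writing $q_n=\gamma_n\cdot k_n$ with $k_n\in K$, invariance gives $d_\Omega(q_n,\gamma_n(p))=d_\Omega(k_n,p)\le R$. Near a strongly pseudoconvex point the Kobayashi metric blows up (of order $\delta^{-1}$ in the complex normal direction and $\delta^{-1/2}$ tangentially, where $\delta$ is the distance to $\partial\Omega$), so fixed-radius $d_\Omega$-balls centered at $q_n$ have Euclidean diameter tending to $0$; hence $\gamma_n(p)\to\xi$ as well. In particular $\Aut(\Omega)$ is noncompact and admits an orbit converging to a strongly pseudoconvex boundary point, which is exactly the input required by the scaling method.

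The heart of the argument is the scaling construction. After an affine change of coordinates I may assume $\xi=0$ and that $\partial\Omega$ osculates the paraboloid $\{\Real z_1+\abs{z'}^2=0\}$ to second order. Let $\delta_n:=\operatorname{dist}(\gamma_n(p),\partial\Omega)\to0$ and apply the anisotropic dilations $\Lambda_n(z_1,z')=(z_1/\delta_n,\,z'/\sqrt{\delta_n})$, preceded by the boundary-straightening change of variables. Then the rescaled domains $\Lambda_n(\Omega)$ converge, in the local Hausdorff sense, to the Siegel domain $\Sc=\{(z_1,z'):\Real z_1+\abs{z'}^2<0\}$, which is biholomorphic to $\Bb_d$ via the Cayley transform. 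Setting $F_n:=\Lambda_n\circ\gamma_n$, suitably post-normalized so the marked points stay in a fixed compact region, produces biholomorphisms $\Omega\to\Lambda_n(\Omega)$.

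Finally I would extract and analyze a limit. Because each $F_n$ is a biholomorphism it is a $d_\Omega$-to-$d_{\Lambda_n(\Omega)}$ isometry, and together with the interior convergence $\Lambda_n(\Omega)\to\Sc$ this keeps the images of a fixed compact subset confined to a compact part of $\Sc$; a normal-families argument then yields a subsequential limit $F\colon\Omega\to\Sc$. The hard part, and the main obstacle, is to prove that $F$ is a biholomorphism onto all of $\Sc$: one must rule out degeneration of the Jacobian, prove injectivity of the limit, and establish surjectivity onto the full Siegel domain. I expect to obtain non-degeneracy by controlling the Jacobians of the $F_n$ under the scaling, injectivity by a Hurwitz-type argument applied to $F_n$ (or by passing the same construction through $F_n^{-1}$), and surjectivity from the domain convergence together with properness of the limit near $\partial\Sc$. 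Once $F\colon\Omega\to\Sc$ is known to be biholomorphic, composing with the Cayley transform $\Sc\cong\Bb_d$ shows that $\Omega$ is biholomorphic to the unit ball.
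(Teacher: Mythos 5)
Your proposal is a correct outline of a genuinely different proof from the one the paper sketches. The paper's route to Theorem~\ref{thm:WR} goes through the Bergman metric: one takes the boundary point $\xi_0$ farthest from an interior point (strongly pseudoconvex, as you also observe), invokes Ohsawa's completeness theorem and the Kim--Yu/Klembeck asymptotics to see that the holomorphic sectional curvature of the Bergman metric tends to $-4/(d+1)$ at $\xi_0$, propagates this constant over all of $\Omega$ by cocompactness and $\Aut(\Omega)$-invariance, and concludes via Lu's uniformization theorem for complete K\"ahler metrics of constant holomorphic sectional curvature. You instead run the Pinchuk scaling method: orbit accumulation at the strongly pseudoconvex point (your localization via the externally tangent ball is the right mechanism, and is the same idea as Proposition~\ref{prop:bd_distance}), anisotropic dilations converging to the Siegel model of $\Bb_d$, and a normal-families limit. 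What your route buys is robustness --- it avoids the boundary asymptotics of the Bergman kernel entirely, and it is exactly the strategy the paper itself is forced to adopt in the $\Cc^{1,1}$ case (Proposition~\ref{prop:rescaling}), where $\Cc^2$ osculation fails and the rescaled domain is only pinched between two paraboloids rather than equal to one. What the Bergman route buys is that it sidesteps the step you rightly flag as the main obstacle: proving the limit map is a biholomorphism onto all of $\mathcal{S}$. That step is not a fatal gap --- injectivity follows from the Hurwitz-type Theorem~\ref{thm:hurwitz} once you note the rescaled images contain a fixed ball (coming from the internally tangent ball at $\xi$), and surjectivity follows by applying the same normal-families argument to $F_n^{-1}$ on compact subsets of $\mathcal{S}$ and checking $F\circ G=\operatorname{id}$ --- but it is real work that you have only named, not carried out, whereas the curvature argument replaces it with a citation to Lu.
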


\begin{remark} Wong proved Theorem~\ref{thm:WR} for strongly pseudoconvex domains and Rosay extended the result to any bounded domain with $\Cc^2$ boundary. \end{remark}

Over the last forty years many different proofs of Theorem~\ref{thm:WR} have been found, but they all rely on essentially the same idea: every bounded domain $\Omega \subset \Cb^d$ with $\Cc^2$ boundary has at least one strongly pseudoconvex boundary point and the interior complex geometry of $\Omega$ near a strongly pseudoconvex boundary point is close to the interior complex geometry of the unit ball. Then, since $\Omega$ covers a compact manifold, the interior complex geometry of $\Omega$ is everywhere close to the interior complex geometry of the unit ball.  Then a limiting argument shows that $\Omega$ is biholomorphic to the ball. 

One way to make this precise is to consider the Bergman metric $g$ on $\Omega$. This is a $\Aut(\Omega)$-invariant K\"ahler metric on $\Omega$ and, since the boundary is $\Cc^2$,  also complete by a result of Ohsawa~\cite{O1981}. Kim-Yu~\cite{KY1996} proved that the holomorphic sectional curvature of $g$ limits to $-4/(d+1)$ at $\xi_0$ (see also~\cite{K1978}). Since $\Aut(\Omega)$ acts co-compactly on $\Omega$, for any point $z \in \Omega$ there exists a sequence $\varphi_n \in \Aut(\Omega)$ such that $\varphi_n(z) \rightarrow \xi_0$. Then, by the invariance of $g$, the holomorphic curvature at $z$ equals $-4/(d+1)$. Since $z$ was arbitrary, $(\Omega, g)$ has constant holomorphic curvature and hence, by a theorem of Q.K. Lu~\cite{Lu1966}, $\Omega$ is biholomorphic to the ball. For more details, see Section 5 in~\cite{KY1996}.

In the $\Cc^{1,1}$ case it is no longer possible to simply localize around a strongly pseudoconvex point which makes the argument much more complicated. 

\subsection{A conjecture} Recently we generalized Theorem~\ref{thm:WR} in a different direction by only assuming that the domain covers a finite volume manifold. 

\begin{theorem}[Z.~\cite{Z2019_JDG}]Suppose $\Omega \subset \Cb^d$ is a bounded pseudoconvex domain with $\Cc^{2}$ boundary and $\Gamma \leq \Aut(\Omega)$ is a discrete group acting freely on $\Omega$. If $\Gamma \backslash \Omega$ has finite volume with respect to either the Bergman volume, the K{\"a}hler-Einstein volume, or the Kobayashi-Eisenman volume, then $\Omega$ is biholomorphic to the unit ball. 
\end{theorem}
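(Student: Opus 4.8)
The plan is to detect the ball through the squeezing function $s_\Omega \colon \Omega \to (0,1]$, which is invariant under $\Aut(\Omega)$ and hence under $\Gamma$, so it descends to a continuous function $\bar s$ on the finite-volume manifold $M = \Gamma \backslash \Omega$. Because $\partial\Omega$ is $\Cc^2$ and pseudoconvex it contains a nonempty open set of strongly pseudoconvex points, and near such a point $\xi_0$ the domain is asymptotically ball-like; this is the content of the Kim--Yu curvature estimate \cite{KY1996}, and in the language I will use it says $s_\Omega(z) \to 1$ as $z \to \xi_0$. Since these near-boundary points project into $M$, we get $\sup_M \bar s = 1$. The whole problem is to show this supremum is \emph{attained}: if $s_\Omega(z_0) = 1$ at some interior point $z_0$, then $\Omega$ is biholomorphic to $\Bb_d$ by the theorem of Deng--Guan--Zhang, which is the squeezing-function incarnation of Lu's characterization of the ball \cite{Lu1966}.

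In the cocompact case attainment is immediate, since $M$ is then compact and a maximizing sequence subconverges; this is precisely the Wong--Rosay theorem phrased invariantly. The difficulty with only finite volume is that a maximizing sequence for $\bar s$ may leave every compact subset of $M$, escaping into an end of the quotient, so one cannot simply transport the near-ball geometry at $\xi_0$ back to a fixed interior point. The idea for overcoming this is to exploit finiteness of the volume through recurrence: the geodesic flow of the invariant complete metric $g$ on the unit tangent bundle of $M$ preserves the finite Liouville measure, so by Poincar\'e recurrence almost every geodesic returns infinitely often to a fixed compact set $K \subset M$. The goal is to combine this with the boundary asymptotics to produce points $z_n \in \Omega$ with $s_\Omega(z_n) \to 1$ whose projections to $M$ remain in $K$; a subsequential limit then realizes $\bar s = 1$ at an interior point, and the proof concludes as above.

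I expect this transfer step to be the main obstacle, and it is where the $\Cc^2$ hypothesis must be used quantitatively. A recurrent geodesic downstairs does not lift to a ray converging to a single boundary point, so recurrence cannot be applied naively along a normal ray into $\xi_0$; instead one must rule out that the high-squeezing region $\{\, s_\Omega > 1-\epsilon \,\}$ is entirely absorbed into the cusps of $M$. Concretely, the task is to show that high squeezing forces enough control on the invariant metric and on local volume to obstruct divergence in the finite-volume manifold, which requires sharp estimates on $g$ and on the volume near strongly pseudoconvex boundary points together with an analysis of the ends. Once such control is in place the recurrence argument closes. Finally, the three admissible volumes should be handled uniformly: on $\{\, s_\Omega > 1-\epsilon \,\}$ the Bergman, K\"ahler--Einstein, and Kobayashi--Eisenman volumes are all comparable to the corresponding quantities for the ball, so the finite-volume hypothesis feeds into the estimates in the same way regardless of which volume is prescribed.
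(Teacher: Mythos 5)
This statement is not proved in the present paper: it is quoted from \cite{Z2019_JDG} as motivation for the conjecture, so there is no in-paper argument to compare against. Judged on its own terms, your proposal is a plan rather than a proof, and the plan has a genuine gap at exactly the point where the theorem is hard.

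The reduction you set up is reasonable: $s_\Omega$ is $\Aut(\Omega)$-invariant, it tends to $1$ along sequences approaching a strongly pseudoconvex boundary point (this is due to Diederich--Forn{\ae}ss--Wold and Kim--Zhang, not to the Kim--Yu curvature estimate, though the fact itself is correct for $\Cc^2$ boundaries), and $s_\Omega(z_0)=1$ at an interior point forces $\Omega \cong \Bb_d$ by Deng--Guan--Zhang. Everything therefore hinges on showing that a maximizing sequence for the descended function $\bar s$ can be kept in a compact subset of $M=\Gamma\backslash\Omega$, and this step you do not carry out. You propose Poincar\'e recurrence for the geodesic flow of an invariant metric, then immediately concede that a recurrent geodesic does not track a normal ray into a strongly pseudoconvex point and that one must separately ``rule out that the high-squeezing region is entirely absorbed into the cusps'' --- but no mechanism for either assertion is supplied. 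There is no a priori relation between the recurrent set of the geodesic flow and the superlevel sets $\{s_\Omega>1-\epsilon\}$: the squeezing function is not monotone or even controlled along geodesics, so recurrence of orbits gives no information about where $\bar s$ is large. The sentences ``the task is to show that high squeezing forces enough control \dots'' and ``once such control is in place the recurrence argument closes'' are statements of intent; they are the theorem, not a proof of it.

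Two further problems. First, the Kobayashi--Eisenman volume is only a volume form, not a metric, so for that case there is no geodesic flow and no Liouville measure, and the recurrence framework does not apply; the claim that the three volumes ``feed into the estimates in the same way'' is asserted without argument. Second, even granting a complete invariant metric, Poincar\'e recurrence only controls almost every orbit and says nothing about the ends of $M$ containing the projections of the high-squeezing region; finite volume of the ends is compatible with $\bar s$ being large only there unless one proves a quantitative lower bound on the invariant volume of $\{s_\Omega>1-\epsilon\}$ modulo $\Gamma$, which is precisely the kind of estimate the cited paper establishes and your sketch defers. As written, the proposal identifies the correct difficulty but does not resolve it.
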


Based on this it seems natural to ask if Theorem~\ref{thm:main} can also be extended to the finite volume case. 

\begin{conjecture} Suppose $\Omega \subset \Cb^d$ is a bounded pseudoconvex domain with $\Cc^{1,1}$ boundary and $\Gamma \leq \Aut(\Omega)$ is a discrete group acting freely on $\Omega$. If $\Gamma \backslash \Omega$ has finite volume with respect to either the Bergman volume, the K{\"a}hler-Einstein volume, or the Kobayashi-Eisenman volume, then $\Omega$ is biholomorphic to the unit ball. 

\end{conjecture}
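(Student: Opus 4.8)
\section*{Proof proposal}

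The plan is to fuse the $\Cc^{1,1}$ boundary analysis that drives Theorem~\ref{thm:main} with the finite-volume recurrence techniques behind the $\Cc^2$ result of Z.~\cite{Z2019_JDG}. First I would fix whichever of the three $\Aut(\Omega)$-invariant metrics $g$ is relevant to the finite-volume hypothesis at hand (Bergman, K\"ahler--Einstein, or Kobayashi--Eisenman), using that each is complete on a bounded pseudoconvex domain. Attached to $g$ is an $\Aut(\Omega)$-invariant ``deviation from the ball'' function---for concreteness the pointwise oscillation of the holomorphic sectional curvature, or $1-s_\Omega$ where $s_\Omega$ is the squeezing function---which vanishes identically exactly when $\Omega$ is biholomorphic to $\Bb_d$, by Lu's theorem~\cite{Lu1966}. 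Since this quantity is invariant under $\Aut(\Omega)$, it descends to a function $\delta$ on the finite-volume manifold $M = \Gamma \backslash \Omega$, and the entire problem reduces to showing $\delta \equiv 0$.

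Next I would set up a thick--thin decomposition of $M$ adapted to $g$: a compact thick part $M_{\geq \varepsilon}$ together with finitely many cusp ends of finite $g$-volume. On the thick part the argument should run essentially as in the cocompact setting of Theorem~\ref{thm:main}: each point of $M_{\geq \varepsilon}$ is represented by a $z \in \Omega$ that can be pushed toward $\partial\Omega$ by a sequence $\varphi_n \in \Aut(\Omega)$, and here the $\Cc^{1,1}$ machinery replaces the smooth localization at a strongly pseudoconvex point by the almost-everywhere Alexandrov second-order structure of $\partial\Omega$ together with a rescaling and normal-families argument identifying the ball as the only possible limit model. Invariance then gives $\delta(z) = \delta(\varphi_n z) \to 0$, so $\delta = 0$ throughout $M_{\geq \varepsilon}$. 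Finite volume enters through Poincar\'e recurrence for the geodesic flow of $g$, whose Liouville measure is finite because $M$ has finite $g$-volume: almost every unit-speed $g$-geodesic returns to $M_{\geq \varepsilon}$ infinitely often. This lets me transport the ball-model normalization from the thick part along recurrent orbits and conclude that $\delta$ vanishes on a full-measure subset of $M$, hence on all of $M$ by continuity---\emph{provided} nothing obstructs the analysis in the cusps.

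The main obstacle will be exactly that proviso: controlling the geometry inside the cusp ends, where the $\Gamma$-action is not cocompact and the compactness and normal-families inputs that power the thick-part analysis fail. I would attack this by examining the boundary portion onto which a cusp accumulates. Pseudoconvexity together with the $\Cc^{1,1}$ regularity rigidly constrain the shape of $\partial\Omega$ there, while finiteness of the $g$-volume of the cusp forces its transverse cross-sections to collapse at a definite rate as one goes out the end. The technical heart is to show that such a collapse is incompatible with any positive value of $\delta$---equivalently, that a finite-volume cusp can only limit onto boundary geometry that is already the ball model---so that no obstruction to $\delta \equiv 0$ can be hidden in the thin part. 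Establishing this cusp estimate uniformly, which seems to require a genuinely new idea beyond what is needed for either Theorem~\ref{thm:main} or \cite{Z2019_JDG} separately, is the crux; once it is in place, the thick-part propagation and the cusp control combine to give $\delta \equiv 0$ and therefore $\Omega \cong \Bb_d$.
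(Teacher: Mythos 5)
You have attempted to prove a statement that the paper itself presents as an open conjecture: there is no proof of it in the paper to compare against, and the paper's author explicitly poses it as a question beyond both Theorem~\ref{thm:main} and \cite{Z2019_JDG}. Your text is accordingly a research program rather than a proof, and by your own admission its crux (``a finite-volume cusp can only limit onto boundary geometry that is already the ball model'') ``seems to require a genuinely new idea.'' A proposal whose central step is acknowledged to be missing is a gap by definition; but beyond that, two of the steps you treat as routine are in fact false or unavailable.

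First, the thick-part step asserts that at a $\Cc^{1,1}$ boundary ``a rescaling and normal-families argument identif[ies] the ball as the only possible limit model.'' This is precisely what fails in the $\Cc^{1,1}$ setting and is why the paper's proof of Theorem~\ref{thm:main} is long: the rescaling of Proposition~\ref{prop:rescaling} produces only a domain $D$ pinched between two ball models, $\Pc_\alpha \subset D \subset \Pc_\beta$ with $\beta < \alpha$, and nothing identifies $D$ with a ball at that stage --- upgrading the pinching to rigidity consumes the entire second half of the paper (Frankel--Nadel, the polydisk theorem, and Bergman kernel estimates on bounded symmetric domains in Harish--Chandra form). Relatedly, your invariant $\delta$ (curvature oscillation of the Bergman metric, or $1-s_\Omega$) is not known to tend to $0$ along orbits approaching a $\Cc^{1,1}$ boundary: the curvature asymptotics of Kim--Yu \cite{KY1996} used in the $\Cc^2$ proof require a genuinely strongly pseudoconvex boundary point, and Alexandrov almost-everywhere second-order differentiability supplies no point at which those asymptotics are known to hold, so ``$\delta(\varphi_n z) \to 0$'' is unjustified even on the thick part. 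Second, the global input you would need to imitate the paper, Theorem~\ref{thm:Frankel}, is stated for \emph{compact} quotients with $c_1 < 0$; in the finite-volume setting it simply does not apply, so the paper's Step 2 has no analogue in your scheme. Two smaller points: the Kobayashi--Eisenman datum is a volume form, not a metric, so ``the geodesic flow of $g$'' is undefined in that case; and your Poincar\'e recurrence step does no work --- if you could show $\delta \equiv 0$ on $M_{\geq \varepsilon}$ for every $\varepsilon > 0$, then $\delta \equiv 0$ on $M = \bigcup_{\varepsilon > 0} M_{\geq \varepsilon}$ by exhaustion, with no dynamics needed; conversely, recurrence cannot ``transport'' the vanishing of $\delta$ into the cusps, since $\delta$ is constant along no orbit a priori. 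So the proposal does not close, and the obstruction it leaves open is exactly the content of the conjecture.
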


\subsection{Outline of the proof of Theorem~\ref{thm:main}} Our proof in the $\Cc^{1,1}$ case is very different from the standard proofs in the $\Cc^2$ case and requires  both local and global arguments. 

Fix a bounded domain  $\Omega \subset \Cb^d$ with $\Cc^{1,1}$ boundary and a discrete group $\Gamma \leq \Aut(\Omega)$ such that $\Gamma$ acts freely, properly discontinuously, and co-compactly on $\Omega$. 

\medskip

\emph{Step 1:} For $\alpha > 0$ define
\begin{align*}
\Pc_{\alpha} = \left\{ (z_1,\dots, z_d) \in \Cb^d : { \rm Re}(z_1) > \alpha \sum_{j=2}^d \abs{z_j}^2 \right\}.
\end{align*}
Notice that $\Pc_{\alpha}$ is biholomorphic to the unit ball. 

We use a rescaling argument to show that $\Omega$ is biholomorphic to a domain $D \subset \Cb^d$ where 
\begin{enumerate}
\item there exists $0 < \beta < \alpha$ such that 
\begin{align*}
\Pc_\alpha \subset D \subset \Pc_{\beta}
\end{align*}
\item $\Aut(D)$ contains the one-parameter subgroup 
\begin{align*}
u_t(z_1,\dots,z_d) = (z_1+it,z_2,\dots, z_d).
\end{align*}
\end{enumerate}
 In particular, $\Aut(\Omega) \cong \Aut(D)$ is non-discrete. 

\medskip

\emph{Step 2:} Next we use a theorem of Frankel and Nadel to deduce that $\Omega$ is a bounded symmetric domain. 

\begin{theorem}[{Frankel, Nadel~\cite{F1995, N1990}}]\label{thm:Frankel} Suppose $M$ is a compact complex manifold with $c_1(M) < 0$ and $\wt{M}$ is the universal cover of $M$. If $\Aut\left(\wt{M}\right)$ is non-discrete, then $\wt{M}$ is biholomorphic to either 
\begin{enumerate}
\item  a bounded symmetric domain, or 
\item a non-trivial product $D_1 \times D_2$ where $D_1$ is a bounded symmetric domain and $\Aut(D_2)$ is discrete. 
\end{enumerate} 
\end{theorem}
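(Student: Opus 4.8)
The plan is to replace the holomorphic problem by a Riemannian one using a canonical metric, and then to exhibit a bounded symmetric domain as an orbit of the automorphism group. First, since $c_1(M) < 0$, the Aubin--Yau theorem provides a unique K\"ahler--Einstein metric $g$ on $M$ with $\mathrm{Ric}(g) = -g$. By uniqueness $g$ is $\Aut(M)$-invariant, and as a canonical object it lifts to a complete $\Aut(\wt{M})$-invariant K\"ahler--Einstein metric $\wt{g}$ on $\wt{M}$, completeness being inherited from the compactness of $M$. The crucial consequence is that \emph{every} biholomorphism of $\wt{M}$ preserves $\wt{g}$, so $\Aut(\wt{M}) \hookrightarrow \mathrm{Isom}(\wt{M}, \wt{g})$; hence $G := \Aut(\wt{M})^0$ acts properly and isometrically on $\wt{M}$, and by hypothesis $\dim G \geq 1$.

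The hard part will be the \emph{semisimplicity} of $G$: that $\gL := \mathrm{Lie}(G)$ is semisimple with no compact factors. I would establish this by the Matsushima--Lichnerowicz method, analyzing holomorphic Killing fields through the K\"ahler identities and the Bochner technique. The delicate point is that these Killing fields do \emph{not} descend to $M = \Gamma \backslash \wt{M}$, so the usual vanishing argument — which shows a compact K\"ahler--Einstein manifold of negative Ricci curvature has no Killing fields — does not apply; one must instead exploit the properness of the $G$-action together with the co-compactness of $\Gamma = \pi_1(M)$, which normalizes $G$ because $G$ is normal in $\Aut(\wt{M})$. This is exactly Nadel's main technical theorem, and essentially all the work lies here.

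Granting semisimplicity, I would produce the symmetric space as an orbit. Fix a point $p \in \wt{M}$; by properness the stabilizer $K := G_p$ is compact, and for a suitable choice of $p$ it is maximal compact, so the orbit $G \cdot p \cong G/K$ is a totally geodesic Riemannian symmetric space of noncompact type in $\wt{M}$. Since $G$ acts by holomorphic isometries, this orbit inherits a compatible complex structure and is therefore Hermitian symmetric, i.e.\ a bounded symmetric domain. A de Rham / normal-bundle argument — using that $(\wt{M}, \wt{g})$ is complete, simply connected, and K\"ahler — then splits $\wt{M}$ isometrically and biholomorphically as a product $\wt{M} \cong D_1 \times D_2$, where $D_1 = G/K$ and $G$ acts trivially on the complementary factor $D_2$.

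Finally, since $G$ acts trivially on $D_2$, any connected group of automorphisms of $D_2$ would enlarge $G = \Aut(\wt{M})^0$; hence $\Aut(D_2)^0 = \{e\}$ and $\Aut(D_2)$ is discrete. As $\dim G \geq 1$, the factor $D_1$ is a nontrivial bounded symmetric domain. If $D_2$ reduces to a point, then $\wt{M} \cong D_1$ is a bounded symmetric domain, which is case (1); otherwise $\wt{M} \cong D_1 \times D_2$ is case (2).
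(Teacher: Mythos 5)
The paper does not prove this statement: it is imported verbatim from Nadel (for $d=2$) and Frankel (in general), with a pointer to Farb--Weinberger for the aspherical case, so there is no internal proof to compare yours against. Judged on its own, your outline correctly identifies the standard opening moves --- Aubin--Yau on $M$, uniqueness of the complete K\"ahler--Einstein metric on $\wt{M}$ forcing $\Aut(\wt{M}) \leq \mathrm{Isom}(\wt{M},\wt{g})$ and hence properness of the action --- but essentially everything after that is asserted rather than argued, and the assertions are exactly where the theorem lives. In particular, you explicitly defer the semisimplicity of $G = \Aut(\wt{M})^0$ to ``Nadel's main technical theorem,'' which amounts to citing the result you set out to prove; as written the proposal contains no argument for it.

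Even granting semisimplicity, the geometric steps that follow have genuine holes. (i) You need a point $p$ whose stabilizer is a \emph{maximal} compact subgroup of $G$; properness only gives compact stabilizers, and a priori $G$ could act with small (even trivial) isotropy everywhere, in which case no orbit is of the form $G/K$ with $K$ maximal compact. (ii) An orbit of a group of \emph{holomorphic isometries} need not be a complex submanifold --- the translation flow $z \mapsto z+t$ on $\Cb$ has totally real orbits --- so ``this orbit inherits a compatible complex structure and is therefore Hermitian symmetric'' does not follow; proving that the relevant orbits are complex, equivalently that $G/K$ is of Hermitian type, is a substantial part of Frankel's paper. (iii) The de Rham theorem splits $\wt{M}$ along the holonomy decomposition of the tangent space, not along an arbitrary orbit foliation; to split off $D_1 = G/K$ holomorphically you must show the orbit distribution and its orthogonal complement are parallel, or at least that the foliation is holomorphic with a holomorphic transverse structure, which again is not automatic. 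Each of (i)--(iii) is a missing idea rather than a routine verification, so the proposal is an accurate road map of the literature rather than a proof.
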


\begin{remark} \ \begin{enumerate}
\item Nadel~\cite{N1990} proved Theorem~\ref{thm:Frankel} when $d=2$ and then Frankel~\cite{F1995} extended the result to all dimensions. 
\item Theorem~\ref{thm:Frankel} is one of several rigidity results which consider manifolds whose universal cover has a non-discrete isometry group, see~\cite{LW2018,FW2008,E1982}.
\item In our setting, the quotient $\Gamma \backslash \Omega$ will be aspherical and in this special case an alternative proof of Theorem~\ref{thm:Frankel} can be found in~\cite{FW2008}.
\end{enumerate}
\end{remark}

If $M := \Gamma \backslash \Omega$, then $c_1(M) < 0$ (see the discussion on~\cite[pg. 286]{F1995}). Further, the domain $\Omega$ is simply connected (see Proposition~\ref{prop:simply_connected}) and hence is the universal cover of $M$. So by Step 1 and Theorem~\ref{thm:Frankel}, we see that $\Omega$ is either symmetric or biholomorphic a product $D_1 \times D_2$ where $D_1$ is symmetric and $D_2$ has discrete automorphism group. We will use the geometry of the rescaled domain  from Step 1 to show that it is impossible for $\Omega$ to be biholomorphic to such a product. Thus $\Omega$ is a bounded symmetric domain. 

\medskip

\emph{Step 3:} To finish the proof of Theorem~\ref{thm:main} we exploit the geometry of the rescaled domain $D$ and the theory of bounded symmetric domains. Let $\Omega_{HC} \subset \Cb^d$ be the image of the Harish-Chandra embedding of $\Omega$. Then by Step 1, there exists a biholomorphism $F :D \rightarrow \Omega_{HC}$. 

To show that $\Omega$ is biholomorphic to the ball, we introduce the holomorphic function
\begin{align*}
J&:\Db \rightarrow \Cb \\
J&(\lambda) = \det\left(F^\prime\left( \frac{1+\lambda}{1-\lambda},0,\dots,0 \right)\right).
\end{align*}
where $F^\prime(z)$ is the Jacobian matrix of $F$. This function measures the volume contraction/expansion of $F$ along the linear slice 
\begin{align*}
\Cb \cdot e_1 \cap D =\left\{(z,0,\dots,0) \in \Cb^d: {\rm Re}(z) > 0\right\}.
\end{align*}
of $D$. Since $F$ is a biholomorphism, $J$ is nowhere zero. 

We will estimate $J$ using the ``change of variable formula'' for the Bergman kernels on $D$ and $\Omega_{HC}$:
\begin{align*}
 \kappa_D(z,w)=\kappa_{\Omega_{HC}}(F(z),F(w))\det(F^\prime(z))\overline{\det(F^\prime(w))}.
\end{align*}
Combining this with a formula for the Bergman kernel on $\Omega_{HC}$ from~\cite{FK1990}, we show: if $\Omega$ is not biholomorphic to the ball, then $J$ extends continuously to $\partial \Db$ and $J|_{\partial \Db} \equiv 0$. But then the maximal principle would imply that  $J \equiv 0$, which is impossible. So $\Omega$ is biholomorphic to the unit ball. A key part in this step is showing that 
\begin{align*}
\lambda \in \Db \rightarrow F\left( \frac{1+\lambda}{1-\lambda},0,\dots,0 \right) \in \Omega_{HC}
\end{align*}
parameterizes the diagonal of a maximal polydisk in $\Omega_{HC}$. 

\subsection*{Acknowledgements} I would like to thank Gestur Olafsson and Ralf Spatzier for helpful conversations about bounded symmetric domains. This material is based upon work supported by the National Science Foundation under grant DMS-1904099.

\section{Preliminaries}

\subsection{Notation}

For $z_0 \in \Cb^d$ and $r > 0$ let 
\begin{align*}
\Bb_d(z_0;r)  = \{ z \in\Cb^d : \norm{z-z_0} <r\}.
\end{align*}
Also let $e_1, \dots, e_d$ denote the standard basis of $\Cb^d$. 

\subsection{The Kobayashi metric} Given a domain $\Omega \subset \Cb^d$ the \emph{(infinitesimal) Kobayashi metric} is the pseudo-Finsler metric
\begin{align*}
k_{\Omega}(x;v) = \inf \left\{ \abs{\xi} : f \in \Hol(\Delta, \Omega), \ f(0) = x, \ d(f)_0(\xi) = v \right\}.
\end{align*}
By a result of Royden~\cite[Proposition 3]{R1971} the Kobayashi metric is an upper semicontinuous function on $\Omega \times \Cb^d$. In particular if $\sigma:[a,b] \rightarrow \Omega$ is an absolutely continuous curve (as a map $[a,b] \rightarrow \Cb^d$), then the function 
\begin{align*}
t \in [a,b] \rightarrow k_\Omega(\sigma(t); \sigma^\prime(t))
\end{align*}
is integrable and we can define the \emph{length of $\sigma$} to  be
\begin{align*}
\ell_\Omega(\sigma)= \int_a^b k_\Omega(\sigma(t); \sigma^\prime(t)) dt.
\end{align*}
One can then define the \emph{Kobayashi pseudo-distance} to be
\begin{multline*}
 K_\Omega(x,y) = \inf \left\{\ell_\Omega(\sigma) : \sigma\colon[a,b]
 \rightarrow \Omega \text{ is absolutely continuous}, \right. \\
 \left. \text{ with } \sigma(a)=x, \text{ and } \sigma(b)=y\right\}.
\end{multline*}
This definition is equivalent to the standard definition of $K_\Omega$ via analytic chains, see~\cite[Theorem 3.1]{V1989}.

We will use the following property of the Kobayashi metric (which is immediate from the definition). 

\begin{observation}\label{obs:monotone} Suppose $\Omega_1 \subset \Cb^{d_1}$, $\Omega_2 \subset \Cb^{d_2}$ are domains. If $f :\Omega_1 \rightarrow \Omega_2$ is holomorphic, then 
\begin{align*}
K_{\Omega_2}(f(p),f(q)) \leq K_{\Omega_1}(p,q)
\end{align*}
and 
\begin{align*}
k_{\Omega_2}(f(p);d(f)_p(v)) \leq k_{\Omega_1}(p;v)
\end{align*}
for all $p,q \in \Omega_1$ and $v \in \Cb^d$. 
\end{observation}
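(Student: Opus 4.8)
The plan is to prove the infinitesimal inequality first and then bootstrap it to the distance inequality by integrating along curves, exactly as suggested by the phrase ``immediate from the definition.''

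For the infinitesimal bound, I would fix $p \in \Omega_1$ and $v \in \Cb^{d_1}$ and push forward competitors by $f$. Take any admissible pair in the definition of $k_{\Omega_1}(p;v)$, that is, a map $g \in \Hol(\Delta,\Omega_1)$ together with a scalar $\xi$ satisfying $g(0)=p$ and $d(g)_0(\xi)=v$, and form the composition $f \circ g$. This is holomorphic and maps $\Delta$ into $\Omega_2$, it satisfies $(f \circ g)(0) = f(p)$, and by the chain rule $d(f\circ g)_0(\xi) = d(f)_p\big(d(g)_0(\xi)\big) = d(f)_p(v)$. Hence $f \circ g$ is itself an admissible competitor in the definition of $k_{\Omega_2}\big(f(p);d(f)_p(v)\big)$, so this quantity is at most $\abs{\xi}$. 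Taking the infimum over all admissible pairs $(g,\xi)$ yields $k_{\Omega_2}\big(f(p);d(f)_p(v)\big) \le k_{\Omega_1}(p;v)$.

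For the distance bound, I would transport curves rather than disks. Given $p,q \in \Omega_1$, let $\sigma \colon [a,b] \to \Omega_1$ be any absolutely continuous curve with $\sigma(a)=p$ and $\sigma(b)=q$, and set $\tau = f \circ \sigma$. Since $\sigma$ has compact image in the open set $\Omega_1$ and $f$ is holomorphic, hence smooth and therefore Lipschitz on a neighborhood of that image, the composition $\tau$ is again absolutely continuous and satisfies $\tau'(t) = d(f)_{\sigma(t)}(\sigma'(t))$ for almost every $t$. Applying the infinitesimal inequality pointwise gives
\[
k_{\Omega_2}\big(\tau(t);\tau'(t)\big) = k_{\Omega_2}\big(f(\sigma(t)); d(f)_{\sigma(t)}(\sigma'(t))\big) \le k_{\Omega_1}\big(\sigma(t);\sigma'(t)\big)
\]
for almost every $t$. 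Integrating over $[a,b]$ produces $\ell_{\Omega_2}(\tau) \le \ell_{\Omega_1}(\sigma)$. As $\tau$ joins $f(p)$ to $f(q)$ in $\Omega_2$, it is admissible in the definition of $K_{\Omega_2}(f(p),f(q))$, so $K_{\Omega_2}(f(p),f(q)) \le \ell_{\Omega_1}(\sigma)$; taking the infimum over all such curves $\sigma$ completes the argument.

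The only genuine subtlety, rather than a real obstacle, is the regularity bookkeeping in the second step: one must check that $\tau = f\circ \sigma$ is absolutely continuous, so that $\ell_{\Omega_2}(\tau)$ is defined through the length formula introduced above, and that the chain rule holds almost everywhere. Both follow from the local Lipschitz property of the holomorphic map $f$ on the compact set $\sigma([a,b]) \subset \Omega_1$, together with the upper semicontinuity of $k_{\Omega_2}$ from Royden's result cited earlier, which guarantees that the integrand on the left is measurable and integrable. Everything else is a direct unwinding of the two variational definitions.
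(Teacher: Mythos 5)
Your argument is correct and is precisely the standard unwinding the paper has in mind when it labels this Observation ``immediate from the definition'' (the paper supplies no proof of its own). Both steps --- pushing forward holomorphic disks through $f$ for the infinitesimal inequality, and integrating that inequality along absolutely continuous curves for the distance inequality --- are carried out properly, including the local Lipschitz and measurability bookkeeping needed to make $\ell_{\Omega_2}(f\circ\sigma)\leq\ell_{\Omega_1}(\sigma)$ legitimate.
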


We will also consider the following special class of maps of the disk into a domain. 

\begin{definition}\label{defn:cplx_geod} Suppose $\Omega \subset \Cb^d$ is a domain. A holomorphic map $\varphi : \Db \rightarrow \Omega$ is called a \emph{complex geodesic} if 
\begin{align*}
K_\Omega(\varphi(z),\varphi(w)) = K_{\Db}(z,w)
\end{align*}
for all $z,w \in \Db$. 
\end{definition}

\subsection{The Bergman kernel and basic properties} Let $\mu$ denote the Lebesgue measure on $\Cb^d$. Then, for a domain $\Omega \subset \Cb^d$ let $\Hc^2(\Omega)$ be the Hilbert space  of holomorphic functions $f:\Omega \rightarrow \Cb$ with $\int_\Omega \abs{f}^2 d\mu<+\infty$. If $\{ \phi_j : j \in J\}$ is an orthonormal basis of $\Hc^2(\Omega)$, then the function
\begin{align*}
\kappa_\Omega & : \Omega \times \Omega \rightarrow \Cb \\
\kappa_\Omega &(z,w) = \sum_{j \in J} \phi_j(z)\overline{\phi_j(w)}
\end{align*}
is called the \emph{Bergman kernel} of $\Omega$.

We now recall two important properties of the Bergman kernel, proofs of both can be found in~\cite[Chapter 12]{JP2013}. 

\begin{proposition}[Monotonicity]\label{prop:bergman_monotone} If $\Omega_1 \subset \Omega_2 \subset \Cb^d$ are domains, then 
\begin{align*}
\kappa_{\Omega_2}(z,z) \leq\kappa_{\Omega_1}(z,z)
\end{align*}
for all $z \in \Omega_1$. 
\end{proposition}

\begin{proposition}[Change of variable formula]\label{prop:change_of_variable} If  $\Omega_1, \Omega_2 \subset \Cb^{d}$ are domains and $F:\Omega_1 \rightarrow \Omega_2$ is a biholomorphism, then
\begin{align*}
\kappa_{\Omega_1}(z,w) =\kappa_{\Omega_2}(F(z), F(w)) \det( F^\prime(z)) \overline{\det( F^\prime(w))}
\end{align*}
for all $z,w \in \Omega_1$. 
\end{proposition}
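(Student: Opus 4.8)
The plan is to show that the biholomorphism $F$ induces a unitary isomorphism between the Bergman spaces $\Hc^2(\Omega_2)$ and $\Hc^2(\Omega_1)$ by pulling back holomorphic functions and weighting by the Jacobian determinant, and then to compute the Bergman kernel of $\Omega_1$ using an orthonormal basis transported through this isomorphism. The essential analytic input is the real change-of-variables formula for integrals, together with the standard fact that the real Jacobian determinant of a holomorphic map $F \colon \Cb^d \rightarrow \Cb^d$ equals $\abs{\det F^\prime}^2$.

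First I would define $U \colon \Hc^2(\Omega_2) \rightarrow \Hc^2(\Omega_1)$ by
\[
(Uf)(z) = f(F(z)) \det\big(F^\prime(z)\big).
\]
Since $F$ and $\det(F^\prime)$ are holomorphic, $Uf$ is holomorphic. To see that $U$ lands in $\Hc^2(\Omega_1)$ and is an isometry, I would substitute $w = F(z)$ and use the Jacobian identity above to compute
\[
\int_{\Omega_1} \abs{Uf}^2 \, d\mu = \int_{\Omega_1} \abs{f(F(z))}^2 \abs{\det F^\prime(z)}^2 \, d\mu(z) = \int_{\Omega_2} \abs{f(w)}^2 \, d\mu(w) = \norm{f}^2.
\]

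Next I would verify that $U$ is onto, hence unitary. Applying the same construction to the biholomorphism $F^{-1}$ produces an operator $V \colon \Hc^2(\Omega_1) \rightarrow \Hc^2(\Omega_2)$, and the chain rule identity $\det\big((F^{-1})^\prime(F(z))\big)\det\big(F^\prime(z)\big) = 1$ shows that $V$ is a two-sided inverse of $U$. Consequently $U$ carries any orthonormal basis $\{\phi_j : j \in J\}$ of $\Hc^2(\Omega_2)$ to an orthonormal basis $\{U\phi_j : j \in J\}$ of $\Hc^2(\Omega_1)$. Since the Bergman kernel is the reproducing kernel of $\Hc^2$ and therefore independent of the choice of orthonormal basis, I would then evaluate $\kappa_{\Omega_1}$ using this transported basis:
\[
\kappa_{\Omega_1}(z,w) = \sum_{j \in J} (U\phi_j)(z) \overline{(U\phi_j)(w)} = \det\big(F^\prime(z)\big) \overline{\det\big(F^\prime(w)\big)} \sum_{j \in J} \phi_j(F(z)) \overline{\phi_j(F(w))},
\]
and the remaining sum is exactly $\kappa_{\Omega_2}(F(z), F(w))$, which gives the claimed formula.

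I expect the only genuinely nontrivial ingredient to be the basis-independence of the Bergman kernel --- equivalently, that the defining series converges to the integral kernel of the orthogonal projection $L^2(\Omega) \rightarrow \Hc^2(\Omega)$ --- which I would cite from the foundational treatment in~\cite{JP2013}; everything else is the holomorphic change-of-variables bookkeeping.
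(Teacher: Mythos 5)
Your proof is correct and is exactly the standard argument: the paper does not prove this proposition itself but defers to \cite[Chapter 12]{JP2013}, where the same unitary pullback $f \mapsto (f\circ F)\det(F')$ between Bergman spaces, combined with basis-independence of the kernel, is the proof given. No gaps; the one point you flag as nontrivial (basis-independence, i.e.\ that the series is the reproducing kernel of the projection onto $\Hc^2$) is indeed the only ingredient that needs an external citation.
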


We will also use the following well known calculation.

\begin{observation}\label{obs:bergman_kernel_parabola} Suppose $\alpha > 0$ and 
\begin{align*}
\Pc_{\alpha} := \left\{ (z_1,\dots, z_d) \in \Cb^d : { \rm Re}(z_1) > \alpha \sum_{j=2}^d \abs{z_j}^2 \right\}.
\end{align*}
Then there exists $C_\alpha > 0$ such that 
\begin{align*}
\kappa_{\Pc_\alpha}\Big((\lambda,0,\dots,0),(z,0,\dots,0)\Big) = C_{\alpha} {\rm Re}(\lambda)^{-(d+1)}
\end{align*}
for all $(\lambda,0,\dots,0) \in \Pc_{\alpha}$.
\end{observation}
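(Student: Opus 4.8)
The plan is to realise $\Pc_\alpha$ as an explicit unbounded model of the ball and then push the classical ball kernel through the change of variable formula (Proposition~\ref{prop:change_of_variable}). First I would verify that the Cayley-type map
\begin{align*}
\Phi(w_1,\dots,w_d) = \left( \frac{1+w_1}{1-w_1}, \frac{w_2}{\sqrt{\alpha}\,(1-w_1)}, \dots, \frac{w_d}{\sqrt{\alpha}\,(1-w_1)} \right)
\end{align*}
is a biholomorphism of $\Bb_d$ onto $\Pc_\alpha$. Writing $z = \Phi(w)$, a direct computation gives
\begin{align*}
{\rm Re}(z_1) - \alpha \sum_{j=2}^d \abs{z_j}^2 = \frac{1 - \norm{w}^2}{\abs{1-w_1}^2},
\end{align*}
which is positive precisely when $w \in \Bb_d$; injectivity and surjectivity are equally routine. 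Moreover $\Phi$ carries the slice $\{(w_1,0,\dots,0) : w_1 \in \Db\}$ onto $\Cb \cdot e_1 \cap \Pc_\alpha$ with $\lambda = \tfrac{1+w_1}{1-w_1}$, so in particular ${\rm Re}(\lambda) = (1-\abs{w_1}^2)/\abs{1-w_1}^2$.

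Next I would combine Proposition~\ref{prop:change_of_variable} applied to $\Phi$ with the classical identity $\kappa_{\Bb_d}(w,w) = \tfrac{d!}{\pi^d}(1-\norm{w}^2)^{-(d+1)}$ to obtain
\begin{align*}
\kappa_{\Pc_\alpha}\big(\Phi(w),\Phi(w)\big) = \frac{\kappa_{\Bb_d}(w,w)}{\abs{\det \Phi^\prime(w)}^2},
\end{align*}
so that the whole problem reduces to evaluating $\det \Phi^\prime$ along the slice (here $\Phi(w) = (\lambda,0,\dots,0)$, so the left-hand side is exactly the diagonal kernel value appearing in the statement). At a point $w = (w_1,0,\dots,0)$ the off-diagonal entries of $\Phi^\prime(w)$ all vanish, because the derivatives $\partial_{w_1}\big(w_j/(\sqrt\alpha(1-w_1))\big)$ carry a factor $w_j = 0$. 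Thus $\Phi^\prime(w)$ is diagonal, with entries $\tfrac{2}{(1-w_1)^2}$ and $\tfrac{1}{\sqrt\alpha(1-w_1)}$ repeated $d-1$ times, so $\det \Phi^\prime(w) = \tfrac{2}{\alpha^{(d-1)/2}(1-w_1)^{d+1}}$. Substituting and using ${\rm Re}(\lambda)^{-(d+1)} = \abs{1-w_1}^{2(d+1)}/(1-\abs{w_1}^2)^{d+1}$, the powers of $\abs{1-w_1}$ and $(1-\abs{w_1}^2)$ cancel exactly, leaving
\begin{align*}
\kappa_{\Pc_\alpha}\big(\Phi(w),\Phi(w)\big) = \frac{d!\,\alpha^{d-1}}{4\pi^d}\, {\rm Re}(\lambda)^{-(d+1)},
\end{align*}
so one may take $C_\alpha = d!\,\alpha^{d-1}/(4\pi^d)$.

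I do not expect any serious obstacle: the content is entirely the determinant bookkeeping and the final algebraic cancellation. The block-diagonal form of $\Phi^\prime$ along the slice makes the Jacobian transparent, and the invariance $\kappa_{\Pc_\alpha}(u_t(p),u_t(p)) = \kappa_{\Pc_\alpha}(p,p)$ coming from the translations $u_t$ (whose Jacobian determinant is $1$) gives a quick sanity check that the answer can depend only on ${\rm Re}(\lambda)$. If one preferred, the explicit closed form for the Bergman kernel of a Siegel domain could be quoted instead, but the Cayley-transform argument has the advantage of staying entirely within the machinery already recorded above.
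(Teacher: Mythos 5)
Your proof is correct, but it takes a genuinely different route from the paper's. The paper never writes down a biholomorphism with the ball: it observes that the one-parameter groups $a_t(z)=(e^tz_1,e^{t/2}z_2,\dots,e^{t/2}z_d)$ and $u_t(z)=(z_1+it,z_2,\dots,z_d)$ lie in $\Aut(\Pc_\alpha)$ and act transitively on the slice $\Cb\cdot e_1\cap\Pc_\alpha$, sets $C_\alpha:=\kappa_{\Pc_\alpha}(e_1,e_1)$, and reads off the exponent $-(d+1)$ from the Jacobian $\det a_t'=e^{t(d+1)/2}$ via Proposition~\ref{prop:change_of_variable}; the constant $C_\alpha$ is never identified. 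You instead use the explicit Cayley map $\Phi:\Bb_d\to\Pc_\alpha$, the classical formula $\kappa_{\Bb_d}(w,w)=\tfrac{d!}{\pi^d}(1-\norm{w}^2)^{-(d+1)}$, and the diagonal form of $\Phi'$ along the slice; your determinant and cancellation bookkeeping all check out, and you get the bonus of the explicit value $C_\alpha=d!\,\alpha^{d-1}/(4\pi^d)$, which the homogeneity argument does not provide (nor does the paper need it). The paper's argument is shorter and requires no knowledge of the ball's kernel, only the existence of the dilations and translations; yours is more computational but self-contained and quantitative. One cosmetic remark: the statement as printed has a stray second argument $(z,0,\dots,0)$ while the right-hand side depends only on $\lambda$; like the paper's own proof, you correctly treat only the diagonal value $\kappa_{\Pc_\alpha}\big((\lambda,0,\dots,0),(\lambda,0,\dots,0)\big)$, which is all that is used later.
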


Since the proof is short we include it. 

\begin{proof} Let 
\begin{align*}
C_\alpha :=\kappa_{\Omega_1}\Big((1,0,\dots,0),(1,0,\dots,0)\Big)
\end{align*}
and consider the automorphisms $a_t, u_t \in \Aut(\Pc_\alpha)$ given by 
\begin{align*}
a_t(z_1,\dots,z_d) = (e^tz_1,e^{t/2}z_2,\dots, e^{t/2}z_d)
\end{align*}
and 
\begin{align*}
u_t(z_1,\dots,z_d) =(z_1+it,z_2,\dots,z_d).
\end{align*}
Then 
\begin{align*}
(\lambda,0,\dots,0) = u_{{\rm Im}(\lambda)} a_{\log {\rm Re}(\lambda)}(1,0,\dots,0)
\end{align*}
and so Proposition~\ref{prop:change_of_variable} implies that 
\begin{equation*}
\kappa_{\Pc_\alpha}\Big((\lambda,0,\dots,0),(\lambda,0,\dots,0)\Big) = C_{\alpha} {\rm Re}(\lambda)^{-(d+1)}. \qedhere
\end{equation*}

\end{proof}

\subsection{A higher dimensional variant of Hurwitz's theorem}

We will use the following higher dimensional variant of Hurwitz's theorem.

\begin{theorem}[{Deng-Guan-Zhang~\cite[Theorem 2.2]{DGZ2012}}]\label{thm:hurwitz} Suppose that $D \subset \Cb^d$ is a bounded domain and $x\in D$. Let $f_n: D \rightarrow \Cb^d$ be a sequence of injective holomorphic maps such that $f_n(x) = 0$ for all $n$ and $f_n$ converges locally uniformly to a map $f:D \rightarrow \Cb^d$. If there exists $\epsilon > 0$ such that $\Bb_d(0;\epsilon) \subset f_n(\Omega)$ for all $n$, then $f$ is injective. 
\end{theorem}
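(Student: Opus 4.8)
The plan is to use the image hypothesis $\Bb_d(0;\epsilon) \subset f_n(D)$ to show that the limit $f$ is non-degenerate, and then to transfer any collision $f(a) = f(b)$ back to the injective maps $f_n$ by a local degree argument. First I would build the inverse maps. Since each $f_n$ is injective with $\Bb_d(0;\epsilon) \subset f_n(D)$, the inverse $g_n := f_n^{-1}$ is a well-defined holomorphic map $g_n : \Bb_d(0;\epsilon) \to D$ with $g_n(0) = x$. As $D$ is bounded, $\{g_n\}$ is a normal family, so after passing to a subsequence $g_n \to g$ locally uniformly, where $g : \Bb_d(0;\epsilon) \to \overline{D}$ is holomorphic and $g(0) = x \in D$. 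Since $D$ is open and $g$ is continuous, there is $\delta \in (0,\epsilon)$ with $g(\Bb_d(0;\delta)) \subset D$. For $w \in \Bb_d(0;\delta)$ the points $g_n(w)$ eventually lie in a fixed compact subset of $D$ on which $f_n \to f$ uniformly, so passing to the limit in $f_n(g_n(w)) = w$ gives $f(g(w)) = w$ on $\Bb_d(0;\delta)$. Differentiating $f \circ g = \id$ at $0$ yields $\det f'(x)\det g'(0) = 1$, so $\det f'(x) \neq 0$; in particular $\det f' \not\equiv 0$.

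Next I would upgrade this to nonvanishing of the Jacobian everywhere. Each $f_n$ is an injective holomorphic self-map in dimension $d$, hence a biholomorphism onto its image, so $\det f_n'$ is nowhere zero on $D$. Because $f_n \to f$ locally uniformly, the Cauchy estimates give $\det f_n' \to \det f'$ locally uniformly. By the several-variable Hurwitz theorem for scalar holomorphic functions (proved by restricting to complex lines and applying the classical one-variable statement), a locally uniform limit of nowhere-zero holomorphic functions is either nowhere zero or identically zero; as $\det f' \not\equiv 0$ we conclude that $\det f'$ is nowhere zero on $D$. Hence $f$ is a local biholomorphism at every point of $D$.

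Finally I would rule out collisions by a local degree argument. Suppose toward a contradiction that $f(a) = f(b) =: w_0$ with $a \neq b$. Since $f$ is a local biholomorphism at $a$ and at $b$, choose disjoint balls $B_a \ni a$ and $B_b \ni b$ with closures in $D$ on which $f$ is a biholomorphism onto open neighborhoods of $w_0$; shrinking, we may assume $w_0 \notin f(\partial B_a) \cup f(\partial B_b)$, so that $\deg(f, B_a, w_0) = \deg(f, B_b, w_0) = 1$. The uniform convergence $f_n \to f$ on $\overline{B_a}$ and $\overline{B_b}$ keeps the straight-line homotopy from $f$ to $f_n$ away from $w_0$ on the two spheres for large $n$, so by homotopy invariance of the topological degree $\deg(f_n, B_a, w_0) = \deg(f_n, B_b, w_0) = 1$. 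Thus for large $n$ the map $f_n$ attains the value $w_0$ in $B_a$ and in $B_b$; since $B_a \cap B_b = \emptyset$ these are two distinct preimages, contradicting the injectivity of $f_n$. Therefore $f$ is injective.

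I expect the main obstacle to be the non-degeneracy step. Without the hypothesis $\Bb_d(0;\epsilon) \subset f_n(D)$ the limit can collapse dimensions (for instance $f_n$ could converge to a constant), and vector-valued limits of injective maps need not be injective in general, so it is essential that the image condition is used first to force $\det f'(x) \neq 0$; only then can the Hurwitz and degree arguments be applied.
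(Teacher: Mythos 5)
Your proof is correct. Note first that the paper does not prove this statement at all: it is quoted verbatim from Deng--Guan--Zhang, so there is no in-paper argument to compare against. Your argument is a complete and standard proof of the cited result: (i) the image condition lets you form the holomorphic inverses $g_n=f_n^{-1}$ on $\Bb_d(0;\epsilon)$, extract a normal limit $g$ with $g(0)=x$, and pass to the limit in $f_n\circ g_n=\id$ to get $f\circ g=\id$ near $0$, whence $\det f'(x)\neq 0$; (ii) since each $f_n$ is injective and equidimensional, $\det f_n'$ is nowhere zero (Osgood's theorem -- this is also what makes the $g_n$ holomorphic in step (i), and it is the one genuinely several-complex-variables input), so the scalar Hurwitz theorem, applied on complex lines, upgrades $\det f'\not\equiv 0$ to $\det f'$ nowhere zero; (iii) the topological degree argument with two disjoint balls around a putative collision $f(a)=f(b)$ forces $f_n$ to take the same value twice for large $n$, contradicting injectivity. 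All three steps are correctly executed, and the homotopy-invariance estimate on the boundary spheres is the right way to transfer the degree from $f$ to $f_n$. Your closing observation is also the essential point: without $\Bb_d(0;\epsilon)\subset f_n(D)$ the limit of injective maps can degenerate (e.g.\ collapse onto a subvariety), and the image hypothesis is used exactly once, to seed the Hurwitz step with $\det f'(x)\neq 0$. One cosmetic remark: the statement as printed says $\Bb_d(0;\epsilon)\subset f_n(\Omega)$, where $\Omega$ is a typo for $D$; you read it correctly.
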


\section{Domains with co-compact automorphism groups}

In this subsection we prove two basic facts about domains whose automorphism group acts co-compactly, that is there is a compact subset whose translates by the automorphism group cover the domain. Both are probably well known. 

\begin{proposition}\label{prop:proper_metric} If $\Omega \subset \Cb^d$ is a bounded domain and $\Aut(\Omega)$ acts co-compactly on $\Omega$, then $(\Omega, K_\Omega)$ is a proper metric space. Hence, $\Omega$ is pseudoconvex. 
\end{proposition}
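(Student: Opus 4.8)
The plan is to show that $(\Omega, K_\Omega)$ is a complete, locally compact length space and then invoke the Hopf--Rinow--Cohn-Vossen theorem to conclude that it is proper; pseudoconvexity will follow afterwards from the standard chain relating completeness of the Kobayashi distance to tautness.

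First I would record some elementary consequences of the monotonicity of the Kobayashi distance (Observation~\ref{obs:monotone}). Since $\Omega$ is bounded we may fix $R$ with $\Omega \subset \Bb_d(0;R)$; comparing with the inclusion into the ball gives a global lower bound $K_\Omega(x,y) \geq c\norm{x-y}$ for some $c>0$ (one may take $c = 1/R$, the value of the infinitesimal metric of $\Bb_d(0;R)$ at its center). In particular $K_\Omega$ is a genuine distance and $K_\Omega$-convergence forces Euclidean convergence. Comparing instead with a small Euclidean ball around a point, and again using Observation~\ref{obs:monotone}, gives on any relatively compact $U \subset\subset \Omega$ a matching upper bound $K_\Omega(x,y) \leq C_U \norm{x-y}$ for nearby $x,y$. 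Thus on compact subsets of $\Omega$ the Kobayashi distance is comparable to the Euclidean distance; consequently $K_\Omega$ induces the standard topology, and every sufficiently small closed Kobayashi ball about an interior point is compact. Finally, $(\Omega, K_\Omega)$ is a length space directly from the definition of $K_\Omega$ as an infimum of lengths of absolutely continuous curves.

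The heart of the argument, and the only place co-compactness is used, is to promote this pointwise local compactness to a uniform one. Let $K \subset \Omega$ be a Euclidean-compact set with $\Aut(\Omega)\cdot K = \Omega$. Using the two-sided comparison above together with the compactness of $K$ (cover $K$ by finitely many small Kobayashi balls with compact double-radius closures and take the minimum radius), I would produce a single $\epsilon_0 > 0$ such that $\bar{B}_{K_\Omega}(k,\epsilon_0)$ is compact for every $k \in K$. Now every $\varphi \in \Aut(\Omega)$ is a $K_\Omega$-isometry (apply Observation~\ref{obs:monotone} to $\varphi$ and to $\varphi^{-1}$) and a homeomorphism of $\Omega$, so for an arbitrary point $z = \varphi(k) \in \Omega$ one has $\bar{B}_{K_\Omega}(z,\epsilon_0) = \varphi\big(\bar{B}_{K_\Omega}(k,\epsilon_0)\big)$, which is again compact. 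Hence \emph{every} closed Kobayashi ball of radius $\epsilon_0$ in $\Omega$ is compact.

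This uniform local compactness immediately yields completeness: a Cauchy sequence is eventually contained in a single closed $\epsilon_0$-ball, hence has a convergent subsequence and therefore converges. Since $(\Omega, K_\Omega)$ is then a complete, locally compact length space, the Hopf--Rinow--Cohn-Vossen theorem shows that closed balls of every radius are compact, i.e.\ $(\Omega, K_\Omega)$ is proper. For the last assertion, a proper metric space is complete, so $K_\Omega$ is complete; as $\Omega$ is bounded it is Kobayashi hyperbolic, and a complete hyperbolic domain is taut, while a taut domain is pseudoconvex. I expect the uniform local compactness step to be the main obstacle: for a general bounded domain Kobayashi balls may cluster on $\partial\Omega$, and it is precisely the isometric co-compact action that rules this out by making the compactness radius $\epsilon_0$ independent of the base point.
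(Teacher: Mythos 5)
Your proposal is correct and follows essentially the same route as the paper: the paper's Lemma 3.3 is exactly your uniform local compactness step (stated abstractly for a locally compact metric space with a co-compact isometry group), followed by the same Hopf--Rinow argument and the same Royden/Wu chain (complete $\Rightarrow$ taut $\Rightarrow$ pseudoconvex). Your explicit two-sided comparison of $K_\Omega$ with the Euclidean distance just fills in the detail the paper dismisses as ``easy to show'' when asserting local compactness.
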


\begin{remark} \ \begin{enumerate}
\item Recall, a metric space is called \emph{proper} if bounded sets are relatively compact.  Proper metric spaces are clearly Cauchy complete and so the ``hence'' part of Proposition~\ref{prop:proper_metric} follows from a result of Wu~\cite[Theorem F]{W1967}. 
\item Siegel~\cite{S2008} proved that if a bounded domain covers a compact manifold, then the domain is pseudoconvex (see~\cite[Section 2.1]{SV2018} for an exposition).  
\end{enumerate}
\end{remark}

\begin{proposition}\label{prop:simply_connected} If $\Omega \subset \Cb^d$ is a bounded domain, $\Aut(\Omega)$ acts co-compactly on $\Omega$, and $\partial \Omega$ is $\Cc^1$, then for every $m \geq 1$ the $m^{th}$ homotopy group $\pi_m(\Omega)$ is trivial. In particular, $\Omega$ is simply connected. 
\end{proposition}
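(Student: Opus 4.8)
The plan is to show that $\Omega$ is contractible. Since $\Omega$ is an open subset of $\Rb^{2d}$ it has the homotopy type of a CW complex, so by Whitehead's theorem a weakly contractible $\Omega$ is contractible, and conversely a single contraction kills every $\pi_m(\Omega)$ simultaneously. Thus it suffices to produce a continuous deformation $H \colon \Omega \times [0,1] \to \Omega$ with $H(\cdot,0) = \id$ and $H(\cdot,1)$ constant. I would first record the metric structure: by Proposition~\ref{prop:proper_metric} the space $(\Omega, K_\Omega)$ is proper, and since $K_\Omega$ is by construction a length metric, properness (hence local compactness and completeness) together with the Hopf--Rinow--Cohn--Vossen theorem shows that $(\Omega, K_\Omega)$ is a geodesic metric space.

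Fix a basepoint $p \in \Omega$. The contraction I would build is the geodesic retraction toward $p$: writing $\gamma_q \colon [0,1] \to \Omega$ for a geodesic from $q$ to $p$ parametrized proportionally to arclength, set $H(q,s) = \gamma_q(s)$. For $H$ to be a genuine homotopy one needs $\gamma_q$ to be \emph{unique} and to depend \emph{continuously} on $q$. This is exactly where the hypotheses enter. Co-compactness supplies homogeneity: the squeezing function of $\Omega$ is positive, continuous, and $\Aut(\Omega)$-invariant, so its infimum over a compact set whose $\Aut(\Omega)$-translates cover $\Omega$ is attained and positive; hence $\Omega$ is uniformly squeezable, $K_\Omega$ is comparable to the Euclidean distance at every scale, and the kind of small-scale topology carried by an inhomogeneous hyperbolic domain (e.g.\ an annulus, whose automorphism group is \emph{not} co-compact) is ruled out. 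The $\Cc^1$ regularity of $\partial \Omega$ is then used to control geodesics as they run toward the boundary, yielding a visibility/convexity property of $K_\Omega$ which, combined with the homogeneity above, I would upgrade to a property strong enough to force uniqueness of $\gamma_q$ and continuous dependence on endpoints, so that $H$ is continuous.

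The main obstacle is precisely this regularity of Kobayashi geodesics: for a general bounded domain geodesics are neither unique nor continuously varying, and $\Cc^1$ regularity of the boundary is too weak to invoke Lempert-type complex-geodesic theory directly. I therefore expect the bulk of the work to lie in establishing the needed convexity (or, alternatively, Gromov hyperbolicity together with geodesic stability, which also produces a continuous coarse contraction that an ANR argument promotes to an honest one) of $(\Omega, K_\Omega)$ out of the combination of uniform squeezing and the $\Cc^1$ boundary; only then would I assemble the retraction $H$. Granting the contraction, $\pi_m(\Omega) = 0$ for all $m \geq 1$, and in particular $\Omega$ is simply connected.
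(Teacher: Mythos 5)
Your proposal has a genuine gap: the entire argument rests on producing a geodesic contraction $H(q,s)=\gamma_q(s)$, and the two properties this requires --- uniqueness of Kobayashi geodesics and their continuous dependence on endpoints --- are exactly the steps you defer and never establish. They are not minor technicalities. Kobayashi geodesics in a bounded domain are badly non-unique in general (any domain containing a totally geodesic bidisk already has one-parameter families of geodesics between suitable pairs of points), and the properties you invoke to repair this are either unavailable or circular at this stage of the paper: Gromov hyperbolicity of $(\Omega,K_\Omega)$ for a domain with co-compact automorphism group fails precisely for the higher-rank bounded symmetric domains, so assuming it is tantamount to assuming the main theorem; and a visibility or convexity property of $K_\Omega$ for merely $\Cc^1$ boundaries (without convexity or strong pseudoconvexity of $\Omega$) is not something you can extract from uniform squeezing plus $\Cc^1$ regularity by the arguments you sketch. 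The assertion that a positive lower bound on the squeezing function ``rules out the small-scale topology of an annulus'' is a heuristic, not a proof of any homotopy-theoretic statement. In short, the proposal identifies its own missing lemma and that lemma is the whole problem.

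The paper's proof is far softer and avoids geodesics entirely. After normalizing so that $e_1\in\partial\Omega$ and $\overline{\Omega}\subset\overline{\Bb_d(e_1;1)}$, one uses the peak-type function $f(z)=e^{z_1-1}$ together with Montel's theorem and the maximum principle to show that a sequence $\varphi_n\in\Aut(\Omega)$ with $\varphi_n(k_n)\to e_1$ ($k_n$ in a fixed compact set) must converge locally uniformly to the \emph{constant} map $e_1$. The $\Cc^1$ hypothesis is used only to guarantee that $\Omega\cap\Oc$ is contractible for some small neighborhood $\Oc$ of $e_1$ (locally the domain is the subgraph of a $\Cc^1$ function). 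Then any map $\sigma:\mathbb{S}^m\to\Omega$ is pushed by some $\varphi_n$ into $\Omega\cap\Oc$, hence is null-homotopic, and since $\varphi_n$ is a homeomorphism so is $\sigma$. You may want to note that this argument proves weak contractibility directly, without ever constructing a global contraction of $\Omega$.
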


\begin{remark} The proof of Proposition~\ref{prop:simply_connected} is a simple modification of the proof of the Lemma on pg. 256 in~\cite{W1977} which in~\cite{W1977} is attributed to R. Greene. 
\end{remark}

\subsection{Proof of Proposition~\ref{prop:proper_metric}} Before proceeding, we recall some terminology. If $(X,d)$ is a metric space, $[a,b] \subset \Rb$, and $\sigma : [a,b] \rightarrow X$ is continuous, then we define the \emph{length of $\sigma$} to be 
\begin{align*}
\ell(\sigma) = \sup\left\{ \sum_{j=1}^N d(\sigma(t_j), \sigma(t_{j+1})) : N \geq 1, \ a \leq t_1 < t_2 < \dots < t_N \leq b \right\}.
\end{align*}
Then $(X,d)$  is called a \emph{length space} if 
\begin{align*}
d(x,y) = \inf\left\{ \ell(\sigma) : \sigma : [0,1] \rightarrow X \text{ continuous with } \sigma(0)=x, \sigma(1)=y\right\}
\end{align*}
for every $x,y \in X$.

We will use the following version of the Hopf-Rinow Theorem (for a proof, see for instance  \cite[Chapter I, Theorem 2.2]{B1995}). 

\begin{theorem}[Hopf--Rinow]\label{thm:hopf_rinow} Suppose $(X,d)$ is a locally compact length metric space. Then the following are equivalent:
\begin{enumerate}
\item $(X,d)$ is a proper metric space,
\item $(X,d)$ is Cauchy complete.
\end{enumerate}
\end{theorem}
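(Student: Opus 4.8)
The plan is to prove the two implications separately, with essentially all of the content in $(2)\Rightarrow(1)$. Throughout I write $\overline{B}(x;r) = \{y \in X : d(x,y) \le r\}$ for the closed metric ball, and I record the elementary monotonicity fact that if $\overline{B}(x;r)$ is compact and $r' < r$, then $\overline{B}(x;r')$ is a closed subset of a compact set and hence compact. The implication $(1)\Rightarrow(2)$ needs neither local compactness nor the length-space hypothesis: a Cauchy sequence is bounded, hence relatively compact by properness, so it has a convergent subsequence, and a Cauchy sequence with a convergent subsequence converges; thus $(X,d)$ is Cauchy complete. For $(2)\Rightarrow(1)$ I fix a basepoint $x_0$ and set $\rho = \sup\{r \ge 0 : \overline{B}(x_0;r) \text{ is compact}\}$. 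Local compactness furnishes a compact closed ball about $x_0$, so $\rho > 0$, and by the monotonicity remark $\overline{B}(x_0;r)$ is compact for every $r < \rho$. The goal is to show $\rho = \infty$: then any bounded set is contained in a compact $\overline{B}(x_0;r)$, so its closure is compact and $(X,d)$ is proper. Accordingly, suppose toward a contradiction that $\rho < \infty$.

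The first step is to show that the critical ball $\overline{B}(x_0;\rho)$ is itself compact. As a closed subset of the Cauchy-complete space $X$ it is complete, so it suffices to prove it is totally bounded. Given $\delta > 0$, the ball $\overline{B}(x_0;\rho - \delta/4)$ is compact and hence admits a finite $\delta/2$-net $\{p_1,\dots,p_n\}$. Here the length-space hypothesis enters: for $y$ with $\rho - \delta/4 < d(x_0,y) \le \rho$ I choose a path $\sigma$ from $x_0$ to $y$ of length $L < d(x_0,y) + \delta/4$, and, using that its arc-length function $t \mapsto \ell(\sigma|_{[0,t]})$ is continuous and runs from $0$ to $L$, I pick a point $y'$ on $\sigma$ at arc-length $\rho - \delta/4$ from $x_0$. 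Then $d(x_0,y') \le \rho - \delta/4$ and $d(y',y) \le L - (\rho-\delta/4) < \delta/2$, so $y$ lies within $\delta$ of the net; points $y$ with $d(x_0,y) \le \rho - \delta/4$ are handled directly. Thus $\{p_1,\dots,p_n\}$ is a $\delta$-net for $\overline{B}(x_0;\rho)$, which is therefore totally bounded, and hence compact.

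The second step produces the contradiction by pushing past $\rho$. I first record a lemma: in a locally compact metric space a compact set $K$ has a compact closed neighborhood, i.e. there is $\epsilon > 0$ with $N_\epsilon(K) := \{x : d(x,K) \le \epsilon\}$ compact. Indeed, cover $K$ by finitely many balls $B(p_i;r_{p_i}/3)$ whose closed triple-radius balls $\overline{B}(p_i;r_{p_i})$ are compact, and set $\epsilon = \min_i r_{p_i}/3$; if $d(x,K)\le\epsilon$ then $x$ is within $\epsilon$ of some $p \in K$ lying in $B(p_i;r_{p_i}/3)$, whence $d(x,p_i) < r_{p_i}$, so $N_\epsilon(K)$ is a closed subset of $\bigcup_i \overline{B}(p_i;r_{p_i})$ and is compact. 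Apply this with the compact set $K = \overline{B}(x_0;\rho)$ to obtain such an $\epsilon$. Now $\overline{B}(x_0;\rho) \subseteq N_\epsilon(K)$ trivially, while for $y$ with $\rho < d(x_0,y) \le \rho + \epsilon/2$ the length-space property again supplies a path from $x_0$ to $y$ of length $L < \rho + 3\epsilon/4$ and, at arc-length $\rho$, a point $y' \in \overline{B}(x_0;\rho) = K$ with $d(y,y') \le L - \rho < \epsilon$; hence $d(y,K) < \epsilon$. Therefore $\overline{B}(x_0;\rho+\epsilon/2) \subseteq N_\epsilon(K)$ is compact, contradicting the definition of $\rho$. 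So $\rho = \infty$ and $(X,d)$ is proper.

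I expect the main obstacle to be the first step, the compactness of $\overline{B}(x_0;\rho)$ at the critical radius, since it is precisely there that the two hypotheses must be combined: Cauchy completeness is what upgrades total boundedness to compactness, while the length-space structure is what lets one approximate a point of the critical ball by interior points of strictly smaller (already compact) balls. Once this compactness and the neighborhood lemma are in hand, the remaining steps are soft point-set topology, and the same length-space device reappears in the push-past argument.
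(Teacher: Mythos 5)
The paper does not actually prove this statement: it is quoted as a known form of the Hopf--Rinow theorem with a pointer to \cite[Chapter I, Theorem 2.2]{B1995}, so there is no in-paper argument to compare against. Your proof is correct and is, in essence, the standard Hopf--Rinow--Cohn-Vossen argument that the cited reference gives: the easy direction $(1)\Rightarrow(2)$ needs no hypotheses beyond the metric, and for $(2)\Rightarrow(1)$ one studies the critical radius $\rho=\sup\{r:\overline{B}(x_0;r)\text{ compact}\}$, shows the critical ball is compact, and then pushes past it. The details all check out: in Step 1, total boundedness of $\overline{B}(x_0;\rho)$ follows because any $y$ with $\rho-\delta/4<d(x_0,y)\le\rho$ admits, by the length-space property, a path of length $L<d(x_0,y)+\delta/4$, and the point $y'$ at arc length $\rho-\delta/4$ satisfies $d(x_0,y')\le\rho-\delta/4$ and $d(y,y')\le L-(\rho-\delta/4)<\delta/2$ (your use of continuity of the arc-length function of a finite-length curve is the standard fact needed here, and Cauchy completeness upgrades total boundedness to compactness); in Step 2, your neighborhood lemma is sound, with the one small point worth making explicit being that $d(x,K)\le\epsilon$ yields an actual nearest point $p\in K$ because $K$ is compact, after which $d(x,p_i)<\epsilon+r_{p_i}/3\le r_{p_i}$ as you say. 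The final inclusion $\overline{B}(x_0;\rho+\epsilon/2)\subseteq N_\epsilon(K)$ then contradicts the definition of $\rho$, forcing $\rho=\infty$ and hence properness. So your proposal supplies a complete, self-contained proof of a result the paper only cites; it buys the reader independence from the reference at the cost of about a page, and it isolates exactly where each hypothesis is used (completeness for total bounded $\Rightarrow$ compact, the length structure for the approximation and push-past steps, local compactness to seed $\rho>0$ and for the neighborhood lemma).
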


We will also use the following lemma. 

\begin{lemma}\label{lem:hopf_rinow} Suppose $(X,d)$ is a locally compact metric space and there exists a compact set $K \subset X$ such that $X = \operatorname{Isom}(X,d) \cdot K$. Then $(X,d)$ is Cauchy complete. 
\end{lemma}

\begin{proof}
We first claim that there exists $\delta >0$ such that for any $x \in X$ then set 
\begin{align*}
\overline{B}_{X}(x;\delta):=\{ y \in X : d(x,y) \leq \delta\}
\end{align*}
is compact. Since $(X,d)$ is locally compact, for any $k \in K$ there exists $\delta_k >0$ such that $\overline{B}_X(k;\delta_k)$ is compact. Then since 
\begin{align*}
K \subset \cup_{k \in K} \{ y \in X : d(k,y) < \delta_k/2\}
\end{align*}
there exists $k_1, \dots, k_N \in K$ such that 
\begin{align*}
K \subset \cup_{i=1}^N \{ y \in X : d(k_i,y) < \delta_{k_i}/2\}.
\end{align*}
Then if 
\begin{align*}
\delta:= \min \{ \delta_{k_i}/2 : i=1,\dots, N\}
\end{align*}
 we see that $\overline{B}_X(x;\delta)$ is compact for any $x \in K$. Since $X = \operatorname{Isom}(X,d) \cdot K$, then $\overline{B}_X(x;\delta)$ is compact for any $x \in X$. 

Now suppose that $x_n$ is a Cauchy sequence in $(X,d)$. Then there exists $N >0$ such that $d(x_n, x_N) < \delta$ for $n \geq N$. So $\{x_n : n \geq N\} \subset \overline{B}_X(x_N; \delta)$. But then there exists a subsequence $x_{n_k}$ which converges. Thus $(X,d)$ is Cauchy complete. 
\end{proof}

\begin{proof}[Proof of Proposition~\ref{prop:proper_metric}] By construction, $(\Omega,K_\Omega)$ is a length metric space. Further, since $\Omega$ is bounded, it is easy to show that $(\Omega, K_\Omega)$ is locally compact.

By Lemma~\ref{lem:hopf_rinow} the metric space $(\Omega, K_\Omega)$ is Cauchy complete. So by Theorem~\ref{thm:hopf_rinow}, $K_\Omega$ is a proper metric on $\Omega$. 

Since $(\Omega, K_\Omega)$ is Cauchy complete, a result of Royden~\cite[Corollary pg. 136]{R1971} says that $\Omega$ is taut. Then $\Omega$ is pseudoconvex by a result of Wu~\cite[Theorem F]{W1967}.
\end{proof}

\subsection{Proof of Proposition~\ref{prop:simply_connected}}

Without loss of generality, we can suppose that $0 \in \Omega$. Then by rotating and scaling we can assume that  
\begin{align*}
1 = \max\{ \norm{z} : z \in \partial \Omega\}
\end{align*}
and $e_1 \in \partial \Omega$. Then define the function 
\begin{align*}
f & : \overline{\Omega} \rightarrow \Cb\\
f& (z_1,\dots,z_d) = e^{z_1-1}.
\end{align*}
Then $\abs{f(z)}\leq 1$ for all $z \in \overline{\Omega}$ with equality if and only if $z =e_1$. 

Now fix a sequence $p_n \in \Omega$ with $e_1 = \lim_{n \rightarrow \infty} p_n$. Since $\Aut(\Omega)$ acts co-compactly on $\Omega$, there exist sequences $\varphi_n \in \Aut(\Omega)$ and $k_n \in \Omega$ such that:
\begin{enumerate}
\item $p_n = \varphi_n(k_n)$
\item $\{ k_n : n \geq 0\}$ is relatively compact in $\Omega$. 
\end{enumerate}
By Montel's theorem we can pass to a subsequence so that $\varphi_n$ converges locally uniformly to a holomorphic map $\varphi_\infty : \Omega \rightarrow \overline{\Omega}$. By passing to another subsequence, we can also assume that $k_n \rightarrow k \in \Omega$. Then 
\begin{align*}
\varphi_\infty(k) =\lim_{n \rightarrow \infty} \varphi_n(k_n) = \lim_{n \rightarrow \infty} p_n = e_1.
\end{align*}
Then consider $g = f\circ \varphi_\infty$, then $\abs{g(z)} \leq 1$ for all $z \in \Omega$ and $\abs{g(k)}=1$. Thus, by the maximal principle, $g \equiv 1$. So $\varphi_\infty \equiv e_1$. 

Since $\partial \Omega$ is $\Cc^1$ there exists a neighborhood $\Oc$ of $e_1$ such that $\Omega \cap \Oc$ is contractible. Now fix a continuous map $\sigma: \mathbb{S}^m \rightarrow \Omega$. Since $\varphi_n$ converges locally uniformly to $\varphi_\infty \equiv e_1$ and $\sigma(\mathbb{S}^m)$ is compact, there exists some $n \geq 0$ such that 
\begin{align*}
(\varphi_n\circ\sigma)(\mathbb{S}^m) \subset \Oc \cap \Omega.
\end{align*}
So $(\varphi_n\circ\sigma)$ is homotopically trivial. So $\sigma$ is  homotopically trivial. Since $\sigma$ was an arbitrary map, $\pi_m(\Omega)=1$.

\section{Rescaling}

As before, for $\alpha > 0$ define
\begin{align*}
\Pc_{\alpha} = \left\{ (z_1,\dots, z_d) \in \Cb^d : { \rm Re}(z_1) > \alpha \sum_{j=2}^d \abs{z_j}^2 \right\}.
\end{align*}

\begin{proposition}\label{prop:rescaling} Suppose $\Omega \subset \Cb^d$ is a bounded domain with $\Cc^{1,1}$ boundary. If $\Aut(\Omega)$ acts co-compactly on $\Omega$, then $\Omega$ is biholomorphic to a domain $D \subset \Cb^d$ where 
\begin{enumerate}
\item $\Pc_\alpha \subset D \subset \Pc_{\beta}$ for some $0 < \beta < \alpha$,
\item $\Aut(D)$ contains the one-parameter subgroup 
\begin{align*}
u_t(z_1,\dots,z_d) = (z_1+it,z_2,\dots, z_d).
\end{align*}
\end{enumerate}
\end{proposition}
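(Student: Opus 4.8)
The plan is to run a non-isotropic (Pinchuk-type) scaling argument based at a carefully chosen boundary point, and to use the co-compactness of $\Aut(\Omega)$ together with the properness of $K_\Omega$ to promote the affine rescalings into an honest biholomorphism onto the limit domain.

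\emph{Choosing the base point and the paraboloid squeeze.} Fix $p_0 \in \Omega$ and let $\xi_0 \in \partial\Omega$ maximize $\norm{z-p_0}$ over $z \in \overline{\Omega}$; set $R=\norm{\xi_0-p_0}$. Then $\overline{\Omega} \subset \overline{\Bb_d(p_0;R)}$ with the sphere tangent to $\partial\Omega$ at $\xi_0$, so (using that $\partial\Omega$ is $\Cc^1$) the outward normal at $\xi_0$ points along $\xi_0-p_0$. After translating $\xi_0$ to the origin and applying a unitary map sending the inner normal to $e_1$, the circumscribing ball becomes $\Bb_d(Re_1;R)=\{\Real(z_1)>\norm{z}^2/(2R)\}$, while the uniform interior ball condition supplied by the $\Cc^{1,1}$ boundary gives a radius $r \in (0,R]$ with $\Bb_d(re_1;r)=\{\Real(z_1)>\norm{z}^2/(2r)\}\subset\Omega$. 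These two osculating balls are the entire source of the squeeze: applying the dilation $\Lambda_\delta(z_1,z')=(\delta z_1,\sqrt{\delta}\,z')$ and letting $\delta \to \infty$ sends $\Bb_d(re_1;r)$ to $\Pc_{1/(2r)}$ and $\Bb_d(Re_1;R)$ to $\Pc_{1/(2R)}$.

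\emph{Scaling and recentering.} For a sequence $s_n \downarrow 0$ put $z_n=s_n e_1 \to 0$ and $A_n=\Lambda_{1/s_n}$, so that $A_n(z_n)=e_1$. By the previous paragraph any subsequential local-Hausdorff limit $D$ of $A_n(\Omega)$ satisfies $\Pc_\alpha \subset D \subset \Pc_\beta$ with $\alpha=1/(2r)$ and $\beta=1/(2R)$. To turn the affine identifications $A_n(\Omega)\cong\Omega$ into a biholomorphism $\Omega\cong D$, I would use co-compactness to choose $\varphi_n \in \Aut(\Omega)$ with $\varphi_n(z_n)$ in a fixed compact set and study $F_n=A_n\circ\varphi_n^{-1}$. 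Since $(\Omega,K_\Omega)$ is proper (Proposition~\ref{prop:proper_metric}) and automorphisms are $K_\Omega$-isometries, the image under $F_n$ of any $K_\Omega$-bounded test set stays in a $K_D$-bounded, hence relatively compact, subset of the squeezed limit, so Montel gives a locally uniform limit $F=\lim F_n:\Omega\to\overline{D}$ carrying the limiting base point to $e_1$. Injectivity of $F$ then follows from the higher-dimensional Hurwitz theorem (Theorem~\ref{thm:hurwitz}), whose hypothesis $\Bb_d(0;\epsilon)\subset F_n(\Omega)$ is guaranteed by $\Pc_\alpha\subset A_n(\Omega)$, and a standard kernel/openness argument identifies the image with $D$. \textbf{Promoting the affine scalings to a non-degenerate biholomorphism $\Omega\to D$ is the main obstacle}, and it is precisely here that co-compactness and the properness of the Kobayashi metric are indispensable.

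\emph{Producing the one-parameter group.} Finally, I claim the limit domain is itself invariant under $w\mapsto w+ite_1$, i.e. $u_t\in\Aut(D)$; this is an \emph{emergent} symmetry, not assumed of $\Omega$. Writing $\partial\Omega$ near $\xi_0$ as a graph $\Real(z_1)=\rho(\Imaginary z_1,z')$ with $\rho$ of class $\Cc^{1,1}$, $\rho(0)=0$ and $\nabla\rho(0)=0$, the Lipschitz bound on $\nabla\rho$ yields $\abs{\partial_{\Imaginary z_1}\rho}=O(\abs{\Imaginary z_1}+\norm{z'})$. Translating $\Omega$ by $its_n e_1$ shifts the first argument of $\rho$ by $ts_n$, so on the scale $z=\Lambda_{s_n}(w)$ the rescaled defining function changes by at most $\tfrac{1}{s_n}\abs{\rho(\Imaginary z_1-ts_n,z')-\rho(\Imaginary z_1,z')}=O(\sqrt{s_n})\to 0$. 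Since $A_n(\Omega+its_n e_1)=A_n(\Omega)+ite_1$, passing to the limit gives $D+ite_1=D$, that is $u_t\in\Aut(D)$. Combined with $F$ this yields $\Aut(D)\cong\Aut(\Omega)$ and conclusion (2); for (1), the strict inequality $\beta<\alpha$ holds whenever $r<R$, and if $r=R$ then $\Omega$ is a ball, $D=\Pc_{1/(2r)}$, and one takes $\alpha=1/(2r)$ together with any $\beta<1/(2r)$.
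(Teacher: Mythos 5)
Your first two paragraphs track the paper's own argument closely (osculating balls from the $\Cc^{1,1}$ condition, the anisotropic dilations, co-compactness plus properness of $K_\Omega$ to extract a limit map, and the Deng--Guan--Zhang theorem for injectivity), and that part is sound. The gap is in the third paragraph. The domain $D$ in the statement is $F(\Omega)$, the image of the holomorphic limit of the maps $F_n$, and membership of a point in $F(\Omega)$ requires producing an actual preimage in $\Omega$, i.e.\ showing that preimages $F_n^{-1}(z_n)$ of approximating points stay in a compact subset of $\Omega$. Your defining-function estimate shows only that the \emph{sets} $A_n(\Omega)$ and $A_n(\Omega)+ite_1$ converge to one another near any fixed compact set; passing from this to ``$D+ite_1=D$'' silently identifies $F(\Omega)$ with the interior of the local Hausdorff limit of $\overline{A_n(\Omega)}$. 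That identification is exactly what is not automatic: locally uniform convergence $F_n\to F$ does not give $F(\Omega)=\lim F_n(\Omega)$ (the rescaled domains are not convex, so the convergence theory you invoke elsewhere does not apply to them, and a point can lie in every $A_n(\Omega)$ while all of its $F_n$-preimages escape to $\partial\Omega$).

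What is needed, and what the paper supplies, is a uniform bound $\sup_n K_{A_n(\Omega)}(w_0,w_0+ite_1)<+\infty$ for each $w_0\in D$; combined with the properness of $K_\Omega$ this forces the preimages into a fixed $K_\Omega$-ball, hence a compact set, so the limit point genuinely lies in $F(\Omega)$ (this is the role of Lemma~\ref{lem:included_in_D}). The Kobayashi bound in turn comes from a quantitative version of your Lipschitz observation (Lemma~\ref{lem:vertical_to_horizontal}): a Euclidean segment of length $\sim\epsilon s_n$ in the $\Real(z_1)$-direction inside $\Omega$ near $0$ forces a rectangle of tangential ($\Imag(z_1)$) extent $\sim m\epsilon s_n$ to lie in $\Omega$, which after rescaling yields a fixed two-dimensional region $S\subset A_n(\Omega)$ containing both $w_0$ and $w_0+ite_1$, whence $K_{A_n(\Omega)}(w_0,w_0+ite_1)\leq K_S(w_0,w_0+ite_1)$ uniformly in $n$. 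Your estimate $\abs{\partial_{\Imag z_1}\rho}=O(\abs{\Imag z_1}+\norm{z'})$ is the right geometric input, but as used it only compares defining functions and never produces the distance bound that makes $w_0+ite_1$ an element of $F(\Omega)$ rather than merely of the set-theoretic limit of the rescaled domains.
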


\subsection{Rescaling Euclidean balls} Before proving Proposition~\ref{prop:rescaling} we describe a rescaling procedure.

We begin by recalling the definition of the local Hausdorff topology on the set of all convex domains in $\Cb^d$. First, define the \emph{Hausdorff distance} between two compact sets $A,B \subset \Cb^d$  by
\begin{align*}
d_{H}(A,B) = \max\left\{ \max_{a \in A} \min_{b \in B} \norm{a-b}, \max_{b \in B} \min_{a \in A} \norm{b-a} \right\}.
\end{align*}
To obtain a topology on the set of all convex domains in $\Cb^d$, we consider the \emph{local Hausdorff pseudo-distances} defined by
\begin{align*}
d_{H}^{(R)}(A,B) = d_{H}\left(A \cap \overline{\Bb_d(0;R)},B\cap \overline{\Bb_d(0;R)}\right), \ R >0.
\end{align*}
Then a sequence of convex domains $\Omega_n$ converges to a convex domain $\Omega$ if there exists some $R_0 \geq 0$ such that 
\begin{align*}
\lim_{n \rightarrow \infty} d_{H}^{(R)}\left(\overline{\Omega_n},\overline{\Omega}\right) = 0
\end{align*}
for all $R \geq R_0$. 

The Kobayashi distance is continuous with respect to this topology, see for instance~\cite[Theorem 4.1]{Z2016}. 

\begin{theorem}\label{thm:kob_conv} Suppose $\Omega_n \subset \Cb^d$ is a sequence of convex domains and $\Omega = \lim_{n \rightarrow \infty} \Omega_n$ in the local Hausdorff topology. Assume the Kobayashi metric is non-degenerate on $\Omega$ and each $\Omega_n$. Then 
\begin{align*}
K_\Omega(p,q) = \lim_{n \rightarrow \infty} K_{\Omega_n}(p,q)
\end{align*}
for all $p, q\in \Omega$. Moreover, the convergence is uniform on compact subsets of $\Omega \times \Omega$. 
\end{theorem}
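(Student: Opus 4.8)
The plan is to fix $p,q \in \Omega$ and establish the two bounds
\begin{align*}
\limsup_{n \to \infty} K_{\Omega_n}(p,q) \le K_\Omega(p,q) \le \liminf_{n \to \infty} K_{\Omega_n}(p,q),
\end{align*}
and then to upgrade the resulting pointwise convergence to local uniform convergence. The one structural input I would extract first from convexity is the following containment: for every compact $C \subset \Omega$ there is an $N$ with $C \subset \Omega_n$ for all $n \ge N$. To see this, suppose a point $x \in C$ with $\overline{\Bb_d(x;\delta)} \subset \Omega$ failed to lie in the open convex set $\Omega_n$. Then there is a real affine functional $\phi_n$ of unit gradient $v_n$ with $\phi_n < c_n$ on $\Omega_n$ and $\phi_n(x) \ge c_n$; the point $x + \delta v_n$ lies in $\overline{\Bb_d(x;\delta)} \subset \overline\Omega$, and for $R$ large the convergence $d_H^{(R)}(\overline{\Omega_n},\overline\Omega) \to 0$ produces a point of $\overline{\Omega_n}$ within $d_H^{(R)}$ of it, on which $\phi_n \ge c_n + \delta - d_H^{(R)}$. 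Since $\phi_n \le c_n$ on $\overline{\Omega_n}$, this forces $d_H^{(R)}(\overline{\Omega_n},\overline\Omega) \ge \delta$, which fails for large $n$.

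For the upper bound, fix $\epsilon > 0$ and choose a holomorphic chain nearly realizing $K_\Omega(p,q)$: disks $f_i : \Db \to \Omega$ and points $b_i \in \Db$ with $f_i(0) = w_{i-1}$, $f_i(b_i) = w_i$, $w_0 = p$, $w_k = q$, and $\sum_i K_\Db(0,b_i) \le K_\Omega(p,q) + \epsilon$. For $s \in (\max_i \abs{b_i}, 1)$ the reparametrised disks $z \mapsto f_i(sz)$ have images contained in the compact set $f_i(\overline{s\Db}) \subset \Omega$, so by the containment above these images lie in $\Omega_n$ for $n$ large; they form an admissible chain in $\Omega_n$ from $p$ to $q$, whence $K_{\Omega_n}(p,q) \le \sum_i K_\Db(0, b_i/s)$. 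Letting $n \to \infty$, then $s \to 1$, then $\epsilon \to 0$ gives $\limsup_n K_{\Omega_n}(p,q) \le K_\Omega(p,q)$.

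For the lower bound I would assume $p \ne q$ (the case $p=q$ being trivial) and use normal families. By the upper bound $L := \liminf_n K_{\Omega_n}(p,q) \le K_\Omega(p,q) < \infty$; pass to a subsequence with $K_{\Omega_n}(p,q) \to L$. Since each $\Omega_n$ is convex with non-degenerate Kobayashi metric, choose $f_n : \Db \to \Omega_n$ with $f_n(0) = p$, $f_n(\sigma_n) = q$, and $K_\Db(0,\sigma_n) \le K_{\Omega_n}(p,q)+1/n$ (such disks exist by Lempert's theorem, or by a limiting argument from the definition), so $\sigma_n \to \sigma_\infty \in [0,1)$. Granting that $\{f_n\}$ is a normal family on $\Db$, extract a locally uniform limit $f : \Db \to \overline\Omega$; then $f(0) = p$ and $f(\sigma_\infty) = q$, so $f$ is nonconstant (and $\sigma_\infty > 0$) because $p \ne q$. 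Because $\Omega$ is convex and $f(0) \in \Omega$, a supporting hyperplane together with the maximum principle forces $f(\Db) \subset \Omega$: if $f$ met $\partial\Omega$, the real part of the complex-linear functional cutting out a supporting hyperplane would attain an interior maximum along $f$ and hence be constant, pushing $f(\Db)$ into that hyperplane and contradicting $f(0) \in \Omega$. Thus $f$ is admissible for $K_\Omega$ and $K_\Omega(p,q) \le K_\Db(0,\sigma_\infty) = L$.

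The main obstacle is the normality of $\{f_n\}$, i.e.\ uniform boundedness on compact subsets of $\Db$, and this is exactly where non-degeneracy of $K_\Omega$ is indispensable: for convex domains it is equivalent (Barth) to the absence of complex affine lines in $\Omega$. I would argue by contradiction, supposing $\norm{f_n(z_n) - p} \to \infty$ for some $z_n$ in a fixed disk $\overline{r\Db}$ with $r < 1$. Using that the segments $[p, f_n(z_n)]$ lie in the convex sets $\Omega_n$ and rescaling the targets by $1/\norm{f_n(z_n) - p}$, a second normal families argument would produce a nonconstant holomorphic map of $\Db$, and in the limit an affine complex line, inside $\Omega$ --- contradicting non-degeneracy of $K_\Omega$. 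The delicate point is that the $\Omega_n$ are themselves possibly unbounded and varying, so this escape-to-infinity estimate must be made uniform in $n$; this is the technical heart of the argument. Finally, to pass from pointwise to local uniform convergence I would invoke Arzel\`a--Ascoli: by the containment above a fixed compactly contained convex set absorbs any compact subset of $\Omega$ into all $\Omega_n$ for large $n$, so Observation~\ref{obs:monotone} bounds $K_{\Omega_n}$ from above uniformly and makes $\{K_{\Omega_n}\}$ equicontinuous on compact subsets of $\Omega \times \Omega$; equicontinuity and pointwise convergence then yield uniform convergence on compacta.
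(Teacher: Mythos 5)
The paper does not actually prove this theorem: it is quoted verbatim from \cite[Theorem 4.1]{Z2016}, so there is no internal proof to compare against. Your architecture --- the compact-absorption lemma for convex domains converging locally in the Hausdorff sense, the upper bound via slightly shrunk analytic chains, the lower bound via near-extremal Lempert disks plus normal families, and equicontinuity for the uniform convergence --- is the standard route, and the absorption lemma, the upper bound, the maximum-principle argument that the limit disk stays inside the convex set $\Omega$, and the Arzel\`a--Ascoli step are all correct as written.

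The genuine gap is precisely the step you yourself flag as the ``technical heart'': local boundedness of the disks $f_n : \Db \rightarrow \Omega_n$. The rescaling you sketch does not close it. If $M_n := \max_{\abs{z} \leq r} \norm{f_n(z)-p} \rightarrow \infty$ and you set $h_n := (f_n - p)/M_n$, then $h_n$ is bounded by $1$ only on $\{\abs{z} \leq r\}$, and its locally uniform limit there can perfectly well be the constant $0$ (compare $z \mapsto (z/r)^n$), so you get neither a nonconstant map nor an affine line; moreover the rescaled targets $(\Omega_n - p)/M_n$ are not usable, since the fixed ball $\Bb_d(p;\epsilon) \subset \Omega_n$ collapses to a point under the rescaling. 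What actually closes the argument is a metric, not a mapping, statement: every point $f_n(z)$ with $\abs{z} \leq r$ satisfies $K_{\Omega_n}(p, f_n(z)) \leq K_{\Db}(0,r)$, and for a convex domain the estimate $K_{\Omega_n}(p,w) \geq \frac{1}{2} \abs{ \log \left( d_{\Euc}(w,H)/d_{\Euc}(p,H) \right)}$ (the inequality the paper quotes from \cite[Lemma 4.2]{Z2017b} in the proof of Proposition 5.3) confines the Kobayashi ball of radius $M$ about $p$ to a slab of width $2e^{2M} d_{\Euc}(p,H)$ about \emph{every} complex hyperplane $H$ disjoint from $\Omega_n$. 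Applying this to the complex supporting hyperplanes of $\Omega_n$ at the nearest points of $\overline{\Omega_n}$ to an arbitrary $\zeta \notin \overline{\Omega}$ (which exist for large $n$, are disjoint from $\Omega_n$, and stay at bounded distance from $p$), one shows that any $w_n \in \Omega_n$ with $K_{\Omega_n}(p,w_n)$ bounded and $\norm{w_n} \rightarrow \infty$ has its limit direction $u$ orthogonal to all limiting normals; combined with convexity this forces $p + \Cb \cdot u \subset \Omega$, contradicting non-degeneracy of $K_\Omega$. Some argument of this kind --- a uniform-in-$n$ properness statement for the Kobayashi balls of $\Omega_n$ --- is indispensable, and without it the lower bound, and hence the theorem, is not established.
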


\begin{remark} Under the hypothesis of Theorem~\ref{thm:kob_conv}: if $K \subset \Omega$ is a compact set, then $K \subset \Omega_n$ for $n$ sufficiently large (see ~\cite[Lemma 4.4]{Z2016}). Thus, $ K_{\Omega_n}(p,q)$ is well defined for $n$ sufficiently large (which depends on $p,q$). 
\end{remark}

We end this discussion with the following example.

\begin{example}\label{ex:rescaling_balls}
Fix $r > 0$, a sequence $r_n>0$ converging to $0$, and the sequence of linear maps 
\begin{align*}
\Lambda_n(z_1,\dots,z_d) = \left( \frac{1}{r_n} z_1, \frac{1}{\sqrt{r_n}} z_2, \dots \frac{1}{\sqrt{r_n}} z_d \right). 
\end{align*}
Then 
\begin{align*}
\Pc_{1/(2r)} = \lim_{n \rightarrow \infty} \Lambda_n \Bb_d(re_1;r)
\end{align*}
in the local Hausdorff topology. 
\end{example}

\subsection{The proof of Proposition~\ref{prop:rescaling}}

The rest of the section is devoted to the proof of the Proposition. So suppose that  $\Omega$ is a bounded domain with $\Cc^{1,1}$ boundary and  there exists a compact set $K \subset \Omega$ such that $\Aut(\Omega) \cdot K = \Omega$.

\begin{lemma} After applying an affine transformation, we can assume that
\begin{align*}
\Bb_d(re_1; r)\subset \Omega \subset \Bb_d(e_1;1)
\end{align*}
for some $r \in (0,1)$.
\end{lemma}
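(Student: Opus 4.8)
The plan is to locate a single boundary point $\xi_0 \in \partial\Omega$ at which $\Omega$ simultaneously admits an interior ball and an enclosing ball, and then to move $\xi_0$ to the origin by a complex-affine similarity. First I would fix any point $p_0 \in \Omega$ and, using that $\overline{\Omega}$ is compact, choose $\xi_0 \in \partial\Omega$ maximizing $\norm{\xi - p_0}$ over $\xi \in \overline{\Omega}$. Writing $R = \norm{\xi_0 - p_0}$, the ball $\Bb_d(p_0;R)$ then contains $\Omega$ and meets $\partial\Omega$ only at $\xi_0$. Since $\partial\Omega$ is $\Cc^1$ it has a unique tangent hyperplane at $\xi_0$, and because $\Omega$ lies on the inner side of the sphere $\partial \Bb_d(p_0;R)$ the two tangent hyperplanes must coincide; hence the outward unit normal to $\Omega$ at $\xi_0$ is exactly $n := (\xi_0 - p_0)/R$.

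For the interior ball I would invoke the interior sphere condition for $\Cc^{1,1}$ domains at $\xi_0$. Representing $\partial\Omega$ locally near $\xi_0$ as a graph $x_d = \phi(x')$ with $\phi \in \Cc^{1,1}$, $\phi(0)=0$, and $\nabla\phi(0)=0$, the Lipschitz bound $\norm{\nabla\phi(x')-\nabla\phi(y')} \le L \norm{x'-y'}$ integrates to $\abs{\phi(x')} \le \tfrac{L}{2}\norm{x'}^2$, which forces a ball of radius $\rho \le 1/L$ tangent at $\xi_0$ to lie inside $\Omega$. Again using that $\partial\Omega$ is $\Cc^1$, this interior ball is tangent to the same hyperplane as the enclosing sphere, so its center lies on the normal line through $\xi_0$ on the inner side; that is, the interior ball is $\Bb_d(\xi_0 - \rho n;\rho)$. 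Since a smaller interior ball still fits, I may shrink $\rho$ to ensure $\rho < R$.

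Finally I would normalize. Choosing a $\Cb$-linear unitary $U$ with $U(n) = -e_1$ and setting $\Phi(z) = \tfrac{1}{R} U(z - \xi_0)$ produces a complex-affine biholomorphism of $\Cb^d$ that carries Euclidean balls to Euclidean balls and scales radii by $1/R$. A direct computation using $p_0 - \xi_0 = -Rn$ gives $\Phi(p_0) = -U(n) = e_1$ and $\Phi(\xi_0-\rho n) = (\rho/R)e_1$, so $\Phi$ sends the enclosing ball to $\Bb_d(e_1;1)$ and the interior ball to $\Bb_d(re_1;r)$ with $r := \rho/R \in (0,1)$. Replacing $\Omega$ by $\Phi(\Omega)$ yields the claimed inclusions $\Bb_d(re_1;r) \subset \Omega \subset \Bb_d(e_1;1)$.

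The only substantive input is the interior sphere condition from $\Cc^{1,1}$ regularity; the rest is elementary affine bookkeeping. I expect the main point requiring care to be the tangency alignment: verifying that the enclosing sphere and the interior ball touch $\partial\Omega$ along the \emph{same} normal direction, so that both centers lie on one line through $\xi_0$. This is precisely where the $\Cc^1$ regularity of $\partial\Omega$ enters, furnishing a unique tangent hyperplane at the extremal point $\xi_0$.
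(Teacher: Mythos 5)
Your argument is correct and follows essentially the same route as the paper: take a farthest boundary point $\xi_0$ from an interior point to get the enclosing tangent ball, invoke the interior ball condition furnished by $\Cc^{1,1}$ regularity at $\xi_0$ (noting that tangency to the enclosing sphere pins down the normal direction), and then normalize by a complex-affine similarity. The only differences are cosmetic — the paper normalizes first and cites the uniform interior ball condition over all of $\partial\Omega$, while you normalize last and only use it at $\xi_0$.
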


\begin{proof}
By translating we can assume that $e_1 \in \Omega$. Then pick $\xi_0 \in \partial \Omega$ such that 
\begin{align*}
\norm{\xi_0-e_1} = \max\{ \norm{\xi-e_1} : \xi\in \partial \Omega\}.
\end{align*} 
By rotating and scaling $\Omega$ about $e_1$, we can assume that $\xi_0=0$. Then 
\begin{align*}
 \Omega \subset \Bb_d(e_1;1).
\end{align*}

For $\xi \in \partial \Omega$, let $n_\Omega(\xi)$ be the inward pointing normal unit vector at $\xi$. Since $\partial\Omega$ is $\Cc^{1,1}$ there exists some $r > 0$ such that 
\begin{align*}
\Bb_d(\xi+rn_\Omega(\xi); r) \subset \Omega
\end{align*}
for every $\xi \in \partial \Omega$. Then, since $\Omega \subset \Bb_d(e_1;1)$, we have $n_\Omega(0)=e_1$ and so
\begin{equation*}
\Bb_d(re_1; r) \subset \Omega. \qedhere
\end{equation*}
\end{proof}

Fix a sequence $r_n \in (0,r)$ converging to $0$. Then pick $\varphi_n \in \Aut(\Omega)$ and $k_n \in K$ such that $\varphi_n(k_n)=r_ne_1$. Then consider the dilations 
\begin{align*}
\Lambda_n(z_1,\dots,z_d) = \left( \frac{1}{r_n} z_1, \frac{1}{\sqrt{r_n}} z_2, \dots \frac{1}{\sqrt{r_n}} z_d \right). 
\end{align*}
Let $\Omega_n := \Lambda_n \Omega$ and $F_n: = \Lambda_n \varphi_n: \Omega \rightarrow \Omega_n$. Then 
\begin{align*}
\Lambda_n \Bb_d(re_1;r) \subset \Omega_n \subset  \Lambda_n \Bb_d(e_1;1).
\end{align*}
Further, by Example~\ref{ex:rescaling_balls}, 
\begin{align*}
\Pc_\alpha = \lim_{n \rightarrow \infty} \Lambda_n \Bb_d(re_1;r) \text{ where } \alpha : = \frac{1}{2r}
\end{align*}
and
\begin{align*}
\Pc_\beta = \lim_{n \rightarrow \infty} \Lambda_n \Bb_d(e_1;1) \text{ where } \beta : = \frac{1}{2}
\end{align*}
in the local Hausdorff topology.

\begin{lemma} After passing to a subsequence, $F_n$ converges to a holomorphic embedding $F: \Omega \rightarrow \Cb^d$. Moreover, if $D = F(\Omega)$, then 
\begin{align*}
\Pc_{\alpha} \subset D \subset \Pc_{\beta}.
\end{align*}
\end{lemma}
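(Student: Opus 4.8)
The plan is to realize $F$ as a locally uniform limit of the $F_n$ via a normal families argument, to upgrade it to an embedding using Theorem~\ref{thm:hurwitz}, and then to establish the two inclusions separately (the lower one being the real work). I would first record the structural facts coming out of the rescaling. Since $\Lambda_n$ is affine and $\varphi_n\in\Aut(\Omega)$, each $F_n=\Lambda_n\varphi_n$ is a biholomorphism onto $\Omega_n=F_n(\Omega)$, so by applying Observation~\ref{obs:monotone} to $F_n$ and $F_n^{-1}$ it is a $K$-isometry, $K_{\Omega_n}(F_n(p),F_n(q))=K_\Omega(p,q)$, and $F_n(k_n)=\Lambda_n(r_ne_1)=e_1$. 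A direct computation shows $\Bb_d(e_1;1)\subset\Pc_\beta$ and that the dilations fix the paraboloids, $\Lambda_n\Pc_\alpha=\Pc_\alpha$, $\Lambda_n\Pc_\beta=\Pc_\beta$; combined with the sandwiching $\Bb_d(re_1;r)\subset\Omega\subset\Bb_d(e_1;1)$ this gives $\Lambda_n\Bb_d(re_1;r)\subset\Omega_n\subset\Pc_\beta$ for every $n$. Finally, after passing to a subsequence I would assume $k_n\to k\in K$.

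For the limit, I would first show $\{F_n\}$ is locally bounded. Fix a compact $Q\subset\Omega$ and set $M_Q=\diam_{K_\Omega}(Q\cup K)$, which is finite because $K_\Omega$ is a genuine continuous distance on the bounded domain $\Omega$. Using $\Omega_n\subset\Pc_\beta$ and monotonicity, for $q\in Q$,
\[ K_{\Pc_\beta}(F_n(q),e_1)\le K_{\Omega_n}(F_n(q),e_1)=K_\Omega(q,k_n)\le M_Q. \]
Since $\Pc_\beta$ is biholomorphic to $\Bb_d$, the closed Kobayashi ball $\{w:K_{\Pc_\beta}(w,e_1)\le M_Q\}$ is a compact subset of $\Pc_\beta$, so $\{F_n(q):q\in Q,\ n\}$ is relatively compact. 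By Montel's theorem I would pass to a subsequence with $F_n\to F$ locally uniformly, $F:\Omega\to\Cb^d$ holomorphic. As the closed ball above is closed in $\Cb^d$, the same estimate forces $F(\Omega)\subset\Pc_\beta$, and from $F_n(k_n)=e_1$ with $k_n\to k$ I get $F(k)=e_1$.

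To see $F$ is an embedding I would apply Theorem~\ref{thm:hurwitz} to $f_n:=F_n-F_n(k)$ on the bounded domain $\Omega$ with base point $k$: these are injective, $f_n(k)=0$, and $f_n\to F-e_1$ locally uniformly. The containment $\Bb_d(0;\epsilon)\subset f_n(\Omega)$ is equivalent to $\Bb_d(F_n(k);\epsilon)\subset\Omega_n$; since $e_1\in\Pc_\alpha$ and $\Lambda_n\Bb_d(re_1;r)\to\Pc_\alpha$ eventually contains a fixed compact neighbourhood of $e_1$ (remark after Theorem~\ref{thm:kob_conv}), while $F_n(k)\to e_1$, such an $\epsilon$ exists after discarding finitely many terms. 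Thus $F-e_1$, and hence $F$, is injective; since an injective holomorphic self-map between domains in $\Cb^d$ is a biholomorphism onto its image, $F$ is an embedding and $D:=F(\Omega)\subset\Pc_\beta$ is open.

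It remains to prove $\Pc_\alpha\subset D$, which I expect to be the main obstacle, since it requires controlling \emph{preimages} rather than images and so cannot be read off from the target geometry alone. Fix $w\in\Pc_\alpha$. For large $n$, $w\in\Lambda_n\Bb_d(re_1;r)\subset\Omega_n$, so there is $p_n\in\Omega$ with $F_n(p_n)=w$, and by the isometry property
\[ K_\Omega(p_n,k_n)=K_{\Omega_n}(w,e_1)\le K_{\Lambda_n\Bb_d(re_1;r)}(w,e_1). \]
By Theorem~\ref{thm:kob_conv} the right side converges to $K_{\Pc_\alpha}(w,e_1)$, hence is bounded in $n$, and since $k_n\in K$ this also bounds $K_\Omega(p_n,k)$. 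The crucial input is now Proposition~\ref{prop:proper_metric}: $(\Omega,K_\Omega)$ is proper, so the $p_n$ remain in a compact subset of $\Omega$. Passing to a subsequence $p_n\to p\in\Omega$ yields $F(p)=\lim_n F_n(p_n)=w$, so $w\in D$. As $w\in\Pc_\alpha$ was arbitrary, $\Pc_\alpha\subset D$, completing the proof.
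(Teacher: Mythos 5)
Your proof is correct and follows the paper's strategy in its essentials: a Kobayashi-distance bound pinning $F_n(Q)$ inside a compact subset of $\Pc_\beta$ to extract a locally uniform limit, and the Deng--Guan--Zhang theorem (Theorem~\ref{thm:hurwitz}) applied to the recentered maps $F_n - F_n(k)$ to get injectivity. The one point where you genuinely diverge is the inclusion $\Pc_\alpha \subset D$: the paper reads this off from the sandwich $\Lambda_n \Bb_d(re_1;r) \subset F_n(\Omega) \subset \Lambda_n\Bb_d(e_1;1)$ by passing to closures, which is terse (and, taken literally, claims $\overline{\Pc_\alpha} \subset D$, which is too strong), whereas you prove it by controlling preimages: for $w \in \Pc_\alpha$ you bound $K_\Omega(F_n^{-1}(w), k_n)$ by $K_{\Lambda_n\Bb_d(re_1;r)}(w,e_1)$ and invoke properness of $(\Omega, K_\Omega)$ (Proposition~\ref{prop:proper_metric}) to extract a convergent subsequence of preimages. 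This is exactly the content of the paper's own Lemma~\ref{lem:included_in_D}, which it proves immediately afterwards for the one-parameter-subgroup step; you have simply front-loaded that argument, and in doing so you supply the justification the paper's derivation of its Equation~\eqref{eq:inclusions} leaves implicit. Your version is the more self-contained and careful of the two; the cost is only a little redundancy with the lemma that follows.
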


\begin{proof} By construction $F_n(k_n) = e_1$ and Observation~\ref{obs:monotone} implies that
\begin{align*}
K_{\Omega_n}(z,w) \leq K_{\Lambda_n \Bb_d(e_1;1)}(z,w)
\end{align*}
for all $z,w \in \Omega_n$. Theorem~\ref{thm:kob_conv} implies that  
\begin{align*}
K_{\Pc_{\beta}}=\lim_{n \rightarrow \infty} K_{\Lambda_n \Bb_d(e_1;1)}
\end{align*}
locally uniformly.  So, using the Arzel\'a-Ascoli theorem, we can pass to a subsequence where $F_n$ converges locally uniformly to a holomorphic map $F : \Omega \rightarrow \Cb^d$. 

Let $D = F(\Omega)$. Since 
\begin{align*}
\Lambda_n \Bb_d(re_1;r) \subset F_n(\Omega) \subset \Lambda_n \Bb(e_1;1)
\end{align*}
for every $n$ we see that 
\begin{equation}
\label{eq:inclusions}
\overline{\Pc_{\alpha}} \subset D \subset \overline{\Pc_{\beta}}.
\end{equation}

Next we use Theorem~\ref{thm:hurwitz} to show that $F$ is injective. Since 
\begin{align*}
\Pc_{\alpha} = \lim_{n \rightarrow \infty} \Lambda_n \Bb_d(re_1;r)
\end{align*}
in the local Hausdorff topology and $\Lambda_n \Bb_d(re_1;r) \subset \Omega_n$ for every $n \geq 0$, there exists $\epsilon > 0$ such that 
\begin{align*}
\Bb_d(e_1;\epsilon) \subset F_n(\Omega)
\end{align*}
for every $n \geq 0$. By passing to a subsequence we can suppose that $k_n \rightarrow k \in K$. Then consider the maps 
\begin{align*}
G_n(z) = F_n(z) -F_n(k).
\end{align*}
Since 
\begin{align*}
\lim_{n \rightarrow \infty} F_n(k)= \lim_{n \rightarrow \infty} F_n(k_n) = F(k) = e_1,
\end{align*}
$G_n$ converges locally uniformly to $F-e_1$. Further, by passing to another subsequence we can suppose that $\norm{e_1-F_n(k)} < \epsilon/2$ for every $n \geq 0$. Then for every $n \geq 0$, the map $G_n$ is injective, $G_n(k) = 0$, and
\begin{align*}
\Bb_d(0;\epsilon/2) \subset G_n(\Omega).
\end{align*}
So $F$ is injective by Theorem~\ref{thm:hurwitz}. Thus $F$ is an embedding. 

Now since $F$ is an embedding, $D$ is an open set and so Equation~\eqref{eq:inclusions} becomes
\begin{align*}
\Pc_{\alpha} = {\rm int} \left( \overline{\Pc_{\alpha}}\right) \subset D \subset {\rm int} \left(\overline{\Pc_{\beta}}\right) = \Pc_\beta.
\end{align*}
This completes the proof. 
\end{proof}

Showing that $\Aut(D)$ contains a one-parameter subgroup requires some preliminary lemmas.

\begin{lemma}\label{lem:included_in_D} Suppose $(z_n)_{n \geq 0}$ is a sequence, $z_n \in \Omega_n$ for every $n$, $\lim_{n \rightarrow \infty} z_n = z$, and 
\begin{align*}
\liminf_{n \rightarrow \infty} K_{\Omega_n}(e_1, z_n) < +\infty,
\end{align*}
then $z \in D$. 
\end{lemma}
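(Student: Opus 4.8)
The plan is to transport the hypothesis back to the fixed domain $\Omega$, where we have a genuine proper metric to work with. The crucial observation is that each $F_n \colon \Omega \rightarrow \Omega_n$ is a biholomorphism with $F_n(k_n) = \Lambda_n \varphi_n(k_n) = \Lambda_n(r_n e_1) = e_1$. So, writing $w_n := F_n^{-1}(z_n) \in \Omega$, Observation~\ref{obs:monotone} applied to both $F_n$ and $F_n^{-1}$ gives the biholomorphic invariance
\begin{align*}
K_{\Omega_n}(e_1, z_n) = K_{\Omega_n}\big(F_n(k_n), F_n(w_n)\big) = K_\Omega(k_n, w_n).
\end{align*}
Thus the finiteness hypothesis $\liminf_{n} K_{\Omega_n}(e_1, z_n) < +\infty$ becomes $\liminf_n K_\Omega(k_n, w_n) < +\infty$, a statement entirely about the single domain $\Omega$.

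Next I would extract a limit for the pullbacks $w_n$. After passing to a subsequence realizing the $\liminf$, there is $M > 0$ with $K_\Omega(k_n, w_n) \leq M$ for all $n$. Recall that $k_n \rightarrow k \in K \subset \Omega$, so the triangle inequality yields $K_\Omega(k, w_n) \leq M + 1$ for all large $n$. Here I invoke Proposition~\ref{prop:proper_metric}: since $(\Omega, K_\Omega)$ is a proper metric space, the closed Kobayashi ball of radius $M+1$ about $k$ is compact, so after a further subsequence $w_n \rightarrow w$ for some $w \in \Omega$. Finally, because $F_n \rightarrow F$ locally uniformly on $\Omega$ and $w_n \rightarrow w \in \Omega$, the usual estimate $\norm{F_n(w_n) - F(w)} \leq \sup_U \norm{F_n - F} + \norm{F(w_n) - F(w)}$ on a compact neighborhood $U$ of $w$ gives $F_n(w_n) \rightarrow F(w)$; hence $z = \lim_n z_n = \lim_n F_n(w_n) = F(w) \in F(\Omega) = D$.

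The point I expect to require the most care is the very thing the lemma asserts: that the limit $z$ lands in the \emph{open} set $D$ rather than merely in $\overline{D}$. A priori $z_n \in \Omega_n$ could converge to a boundary point of $D$, and indeed this is exactly what happens for sequences along which the Kobayashi distance blows up. The Kobayashi bound is precisely what rules this out: it keeps the pullbacks $w_n$ confined to a fixed compact subset of $\Omega$ (via properness), so their limit $w$ cannot escape to $\partial\Omega$, and consequently $F(w)$ is an honest interior value. The only hypotheses being used beyond this are the biholomorphic invariance of $K_\Omega$ and the already-established facts that $k_n \rightarrow k \in \Omega$ and $F_n \rightarrow F$ locally uniformly; everything else is the Hopf--Rinow-type properness packaged in Proposition~\ref{prop:proper_metric}.
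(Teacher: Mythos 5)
Your argument is correct and is essentially the paper's own proof: both pull $z_n$ back to $w_n = F_n^{-1}(z_n) \in \Omega$, use the biholomorphic invariance of the Kobayashi distance together with $F_n(k_n)=e_1$ to trap the $w_n$ in a fixed closed Kobayashi ball, invoke the properness of $(\Omega, K_\Omega)$ from Proposition~\ref{prop:proper_metric} to extract a convergent subsequence $w_n \rightarrow w \in \Omega$, and conclude $z = F(w) \in D$ from the locally uniform convergence $F_n \rightarrow F$. The only cosmetic difference is that the paper centers the Kobayashi ball at a fixed $z_0$ using $R = \max_{k \in K} K_\Omega(z_0,k)$ rather than at the limit $k$ of the $k_n$.
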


\begin{proof} Fix $z_0 \in \Omega$ and let
\begin{align*}
R = \max_{k \in K} K_\Omega(z_0, k).
\end{align*}
Then pick $n_j \rightarrow \infty$ such that 
\begin{align*}
M:=\lim_{j \rightarrow \infty} K_{\Omega_{n_j}}(e_1, z_{n_j}) < +\infty.
\end{align*}

Since $F_n(k_n)=e_1$ and $k_n \in K$, for each $j \geq 0$, there exists 
\begin{align*}
w_j \in \overline{B_\Omega(z_0; R+M)}
\end{align*}
such that $F_{n_j}(w_j) = z_{n_j}$. By Proposition~\ref{prop:proper_metric},  $K_\Omega$ is a proper metric on $\Omega$. So we can pass to a subsequence such that $w_j \rightarrow w \in \Omega$. Since $F_n \rightarrow F$ locally uniformly, we then have
\begin{align*}
F(w) = \lim_{j \rightarrow \infty} F_{n_j}(w_j) = \lim_{j \rightarrow \infty} z_j = z.
\end{align*}
So $z \in F(\Omega) =D$. 
\end{proof}

Before proceeding we recall some standard notations. First, let $\ip{\cdot, \cdot}$ denote the standard inner product on $\Cb^d$, that is 
\begin{align*}
\ip{z,w} = \overline{w}^t z
\end{align*}
for all $z,w \in \Cb^d$. Then given a $\Cc^1$ function $f : \Cb^d \rightarrow \Rb$ let $\nabla f$ denote the gradient of $f$, that is 
\begin{align*}
\lim_{h \rightarrow 0} \frac{f(z+hv)-f(z)}{h} = {\rm Re} \ip{\nabla f(z), v}
\end{align*}
for all $z,v \in \Cb^d$. 

The next lemma essentially says that the distance to the boundary in the tangential direction is much larger than the distance to the boundary in the normal direction.

\begin{lemma}\label{lem:vertical_to_horizontal}
For every $m > 0$, there exists $\delta_m > 0$ such that:  if $z_0 \in \Omega \cap \Bb_d(0;\delta_m)$, $T > 0$, and 
\begin{align*}
\{ z_0+xe_1: -T < x < T\} \subset \Omega,
\end{align*}
then 
\begin{align*}
\{ z_0+(x+iy)e_1: -T/2 \leq x \leq T/2, \ -mT \leq y \leq mT\ \} \subset \Omega.
\end{align*}
\end{lemma}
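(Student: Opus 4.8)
The plan is to describe $\partial\Omega$ near $0$ as the graph of a function with Lipschitz gradient and then reduce the statement to the elementary observation that, on a region of diameter $O(\delta_m)$, such a graph has slope $O(\delta_m)$ and therefore varies by only $O(\delta_m\, mT)\ll T$ over a tangential run of length $mT$.

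First I would record what the normalization $\Bb_d(re_1;r)\subset\Omega\subset\Bb_d(e_1;1)$ gives: the origin lies in $\partial\Omega$ with inward unit normal $n_\Omega(0)=e_1$, and since $\Bb_d(e_1;1)\subset\{{\rm Re}(z_1)>0\}$ we have $\Omega\subset\{{\rm Re}(z_1)>0\}$ and $\overline{\Omega}\subset\{{\rm Re}(z_1)\geq 0\}$. Because $\partial\Omega$ is $\Cc^{1,1}$ and the inward normal at $0$ is $e_1$, there are $\delta_0>0$, $L>0$, and a function $\phi$, defined near the origin in the tangent coordinates $w=({\rm Im}(z_1),z')$ with $z'=(z_2,\dots,z_d)$, such that $\phi(0)=0$, $\nabla\phi(0)=0$, $\nabla\phi$ is $L$-Lipschitz (so $\norm{\nabla\phi(w)}\leq L\norm{w}$), and
\[
\Omega\cap\Bb_d(0;\delta_0)=\left\{z:{\rm Re}(z_1)>\phi\big({\rm Im}(z_1),z'\big)\right\}\cap\Bb_d(0;\delta_0).
\]

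Given $m>0$, I would then set $\delta_m:=\min\{\delta_0/(C(m+1)),\,1/(4Lm(m+2))\}$ for a suitable absolute constant $C$ and argue as follows. Write $z_0=(a_0+ib_0,z_0')$. Every point of the segment lies in $\Omega\subset\{{\rm Re}(z_1)>0\}$, so letting $x\to -T$ gives $a_0-T\geq 0$; hence $T<a_0\leq\norm{z_0}<\delta_m$. This is the crucial first step: it forces $T$ to be small, so that the whole rectangle and segment stay inside $\Bb_d(0;\delta_0)$ and the graph description is available throughout. Applying that description to the segment (with ${\rm Im}(z_1)=b_0$ and $z'=z_0'$ fixed) yields $a_0+x>\phi(b_0,z_0')$ for $-T<x<T$, and therefore $a_0-T\geq\phi(b_0,z_0')$.

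Finally, for a target point $p=(a_0+x+i(b_0+y),z_0')$ with $\abs{x}\leq T/2$ and $\abs{y}\leq mT$, I would bound the tangential variation of $\phi$: for $t$ between $0$ and $y$ the point $w=(b_0+t,z_0')$ satisfies $\norm{w}<(m+2)\delta_m$, so $\norm{\nabla\phi(w)}\leq L(m+2)\delta_m$ and hence $\abs{\phi(b_0+y,z_0')-\phi(b_0,z_0')}\leq mT\cdot L(m+2)\delta_m<T/2$ by the choice of $\delta_m$. Combining the two facts above gives
\[
{\rm Re}(p_1)=a_0+x\geq a_0-\tfrac{T}{2}\geq\phi(b_0,z_0')+\tfrac{T}{2}>\phi(b_0+y,z_0'),
\]
so $p\in\Omega$, which is exactly the asserted inclusion. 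The only genuine content is this last estimate; the step I expect to require the most care is establishing $T<\delta_m$ at the outset, since without it $T$ could be large, the rectangle would leave the graphing neighborhood, and the localized picture would no longer control $\Omega$.
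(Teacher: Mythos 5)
Your proposal is correct and follows essentially the same route as the paper: both arguments hinge on first deducing $T\le\delta_m$ from $\Omega\subset\{\Real(z_1)>0\}$, and then using the near-flatness of $\partial\Omega$ at $0$ (where the inward normal is $e_1$) to show that a tangential displacement of length $mT$ changes the normal clearance by at most $T/2$. The only cosmetic difference is that the paper integrates the gradient of a global $\Cc^1$ defining function along the horizontal and then vertical segments, while you integrate the gradient of a local graph function over the tangent plane; the two computations are equivalent.
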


\begin{proof} Fix a $\Cc^1$ defining function $\rho : \Cb^d \rightarrow \Rb$ of $\Omega$, i.e. 
\begin{align*}
\Omega = \{ z \in \Cb^d : \rho(z) < 0\}
\end{align*}
 and $\nabla \rho(z) \neq 0$ in a neighborhood of $\partial\Omega$. Since $0 \in \partial \Omega$ and 
 \begin{align*}
 \Omega \subset \Bb_d(e_1;1)
 \end{align*}
 we must have $\nabla \rho(0) = -e_1$. 
 
 Since 
 \begin{align*}
 \{ 0\} = \left\{ z \in \overline{\Omega} : { \rm Re} \ip{z,e_1}=0\right\},
 \end{align*}
 there exists $\delta_m > 0$ such that: if $z \in \Omega$ and ${ \rm Re} \ip{z,e_1} < 2\delta_m$, then 
 \begin{align*}
 \norm{\nabla \rho(z) -(-e_1)} < \frac{1}{2m+1}.
 \end{align*} 
 
Now fix $z_0 \in \Omega \cap \Bb_d(0;\delta_m)$ and $T > 0$ such that
\begin{align*}
\{ z_0+xe_1: -T < x < T\} \subset \Omega.
\end{align*}
Since $\Omega \subset  \Bb_d(e_1;1)$ we have
\begin{align*}
0 \leq { \rm Re} \ip{z_0-Te_1,e_1} = { \rm Re} \ip{z_0,e_1} - T <\delta_m -T.
\end{align*}
So $T \leq \delta_m$. Thus 
\begin{align*}
{ \rm Re} \ip{z_0+xe_1,e_1} < 2\delta_m
\end{align*}
when $-T < x < T$. 

Now fix $-T/2 \leq x \leq T/2$. Then 
\begin{align*}
\rho(z_0+xe_1) &= \rho(z_0-Te_1) + \int_{-T}^x{ \rm Re} \ip{ \nabla \rho(z_0+te_1), e_1} dt \\
& \leq  \rho(z_0-Te_1) +\int_{-T}^x \left(-1+\frac{1}{2m+1}\right) dt\\
& \leq 0- \frac{2m}{2m+1} (x+T) \leq - \frac{m}{2m+1} T.
\end{align*}
Then let
\begin{align*}
y^+ := \min\{ y \geq 0 : z_0+(x+iy)e_1 \in \partial\Omega\}.
\end{align*}
Notice that, if $y \in [0,y^+)$, then $z_0+(x+iy)e_1 \in \Omega$ and 
\begin{align*}
{ \rm Re} \ip{z_0+(x+iy)e_1,e_1}={ \rm Re} \ip{z_0+xe_1,e_1} < 2\delta_m.
\end{align*}
So
\begin{align*}
0=\rho(z_0+(x+iy^+)e_1) &= \rho(z_0+xe_1) + \int_0^{y^+}{ \rm Re} \ip{ \nabla \rho(z_0+(x+iy)e_1), ie_1} dy\\
&  \leq - \frac{m}{2m+1} T+\int_0^{y^+}\frac{1}{2m+1} dy = \frac{1}{2m+1} \left( y^+ - mT \right).
\end{align*}
Hence $y^+ \geq mT$. Next define 
\begin{align*}
y^- = \max\{ y \leq 0 : z_0+(x+iy)e_1 \in \partial\Omega\}.
\end{align*}
Then a similar argument shows that $y^- \leq -mT$. So 
\begin{align*}
\{ z_0 + (x+iy)e_1 : -mT \leq y \leq mT \} \subset \Omega.
\end{align*}
Since $-T/2 \leq x \leq T/2$ was arbitrary, this completes the proof of the lemma. 

\end{proof}

\begin{lemma} $\Aut(D)$ contains the one-parameter subgroup 
\begin{align*}
u_t(z_1,\dots,z_d) = (z_1+it,z_2,\dots, z_d).
\end{align*}
\end{lemma}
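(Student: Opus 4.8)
The plan is to prove the stronger statement that $u_t(D) = D$ for every $t \in \Rb$. Since each $u_t$ is a biholomorphism of all of $\Cb^d$, this immediately gives $u_t \in \Aut(D)$, and $t \mapsto u_t$ is then the desired one-parameter subgroup. Moreover it suffices to show $u_t(D) \subseteq D$ for every $t \in \Rb$: applying this with $-t$ in place of $t$ yields $u_{-t}(D) \subseteq D$, that is $D \subseteq u_t(D)$, and the two inclusions give equality. To verify $u_t(D) \subseteq D$ I would fix $z \in D$ and invoke Lemma~\ref{lem:included_in_D}: it is enough to produce points $\zeta_n \in \Omega_n$ with $\zeta_n \to u_t(z)$ and $\liminf_n K_{\Omega_n}(e_1, \zeta_n) < +\infty$ (the lemma only extracts a subsequence, so defining $\zeta_n$ for large $n$ suffices).

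First I would set up the candidate sequence. Write $z = F(w)$ with $w \in \Omega$ and put $\zeta_n^0 := F_n(w) \in \Omega_n$, so $\zeta_n^0 \to z$. Because $F_n = \Lambda_n \varphi_n \colon \Omega \to \Omega_n$ is a biholomorphism it is a Kobayashi isometry, so $K_{\Omega_n}(e_1, \zeta_n^0) = K_{\Omega_n}(F_n(k_n), F_n(w)) = K_\Omega(k_n, w)$, which is bounded since $k_n \to k$. Pulling back by $\Lambda_n$, the point $q_n := \Lambda_n^{-1}\zeta_n^0 = \varphi_n(w)$ satisfies $q_n \to 0 \in \partial\Omega$, since $\zeta_n^0$ stays bounded while $\Lambda_n^{-1} \to 0$ on bounded sets. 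As $\Lambda_n^{-1} u_t \Lambda_n = u_{t r_n}$, the candidate $\zeta_n := u_t(\zeta_n^0)$ lies in $\Omega_n$ exactly when $q_n + i t r_n e_1 \in \Omega$, and in that case $K_{\Omega_n}(\zeta_n^0, \zeta_n) = K_\Omega(q_n, q_n + i t r_n e_1)$. Granting that $q_n + itr_n e_1 \in \Omega$ for large $n$ and that this distance is bounded independently of $n$, we get $\zeta_n \to u_t(z)$ and $K_{\Omega_n}(e_1, \zeta_n) \le K_{\Omega_n}(e_1, \zeta_n^0) + K_{\Omega_n}(\zeta_n^0, \zeta_n)$ bounded, so $u_t(z) \in D$ by Lemma~\ref{lem:included_in_D}.

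The main obstacle is to manufacture enough vertical room to feed Lemma~\ref{lem:vertical_to_horizontal}, and this is where I expect the real work. Since $F_n \to F$ locally uniformly and $F$ is a holomorphic embedding, the Cauchy estimates give $F_n' \to F'$ locally uniformly with $F'(w)$ invertible; a uniform inverse function theorem then produces a constant $c > 0$, independent of $n$, with $\Bb_d(\zeta_n^0; c) \subseteq F_n(\Omega) = \Omega_n$ for all large $n$. Transporting the real segment $\{\zeta_n^0 + s e_1 : \abs{s} < c\}$ through $\Lambda_n^{-1}$ shows $\{q_n + x e_1 : \abs{x} < c r_n\} \subset \Omega$, i.e. horizontal room of size $T := c r_n$ at $q_n$. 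Choosing an integer $m$ with $m c > \abs{t}$, and taking $n$ large enough that $q_n \in \Bb_d(0;\delta_m)$, Lemma~\ref{lem:vertical_to_horizontal} applies at $z_0 = q_n$ and yields
\[
\{ q_n + (x+iy)e_1 : \abs{x} \le T/2,\ \abs{y} \le m T \} \subset \Omega.
\]
Since $m T = m c\, r_n > \abs{t}\, r_n$, this rectangle contains $q_n + i t r_n e_1$, so in particular $q_n + i t r_n e_1 \in \Omega$.

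It then remains to bound the Kobayashi distance. The holomorphic map $\zeta \mapsto q_n + \zeta e_1$ sends the rectangle $R_n := \{ x + iy : \abs{x} < T/2,\ \abs{y} < mT\}$ into $\Omega$, so Observation~\ref{obs:monotone} gives $K_\Omega(q_n, q_n + itr_n e_1) \le K_{R_n}(0, itr_n)$. The linear scaling $\zeta \mapsto r_n \zeta$ is a biholomorphism of the fixed rectangle $R := \{x+iy : \abs{x} < c/2,\ \abs{y} < mc\}$ onto $R_n$ carrying $it$ to $itr_n$, so $K_{R_n}(0, itr_n) = K_R(0, it)$, a finite constant independent of $n$ since both points lie in the bounded domain $R$. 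This supplies the required uniform bound, completing the verification that $u_t(z) \in D$ and hence that $u_t(D) = D$. The only genuinely delicate point is the uniform lower bound on the horizontal room, i.e. the ball $\Bb_d(\zeta_n^0;c) \subset \Omega_n$; everything else is bookkeeping around Lemmas~\ref{lem:included_in_D} and~\ref{lem:vertical_to_horizontal}.
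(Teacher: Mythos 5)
Your proposal is correct and follows essentially the same route as the paper: pull the point back to $\Omega$ via $\Lambda_n^{-1}$, use local uniform convergence of $F_n$ to get a uniform ball in the image (hence horizontal room of size $\asymp r_n$ near $0\in\partial\Omega$), convert it to vertical room via Lemma~\ref{lem:vertical_to_horizontal}, bound the Kobayashi distance by a fixed rescaled rectangle, and conclude with Lemma~\ref{lem:included_in_D}. The only cosmetic differences are that you track the moving points $F_n(w)$ instead of the fixed point $w_0$ (which makes the bound on $K_{\Omega_n}(e_1,\cdot)$ needed for Lemma~\ref{lem:included_in_D} more explicit than in the paper) and that you justify the uniform image ball via a quantitative inverse function theorem rather than asserting it directly.
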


\begin{proof}It is enough to fix $w_0 \in D$ and $t \in \Rb$, then show that $w_0+ite_1 \in D$. 

Since the sequence $F_n$ converges locally uniformly to $F$, there exists $\epsilon >0$ and $N \geq 0$ such that 
\begin{align*}
\Bb_d(w_0;\epsilon) \subset F_n(\Omega) = \Lambda_n(\Omega)
\end{align*}
for all $n \geq N$. 

Define $w_n : = \Lambda_n^{-1}w_0$. Then 
\begin{align}\label{eq:w_n_vertical}
\{ w_n + x e_1 : -r_n\epsilon < x <  r_n\epsilon\} \subset \Lambda_n^{-1}\Bb_d(w_0;\epsilon)  \subset \Omega
\end{align}
when $n \geq N$. 

Fix $m \in \Nb$ such that $\abs{t} < m\epsilon$. Let $\delta_m > 0$ be the associated constant from Lemma~\ref{lem:vertical_to_horizontal}. Since $r_n \rightarrow 0$, 
\begin{align*}
\lim_{n \rightarrow \infty} w_n = 0
\end{align*}
so by increasing $N$ we can assume that 
\begin{align*}
w_n \in \Bb_d(0;\delta_m)
\end{align*}
when $n \geq N$. Then by Equation~\eqref{eq:w_n_vertical} and Lemma~\ref{lem:vertical_to_horizontal}
\begin{align*}
\left\{ w_n + (x+iy)e_1 : -r_n\epsilon/2 < x <  r_n\epsilon/2, \ -mr_n\epsilon < y < mr_n\epsilon \right\} \subset\Omega
\end{align*}
when $n \geq N$. So 
\begin{align*}
S:=\left\{ w_0 + (x+iy)e_1 : -\epsilon/2 < x <  \epsilon/2, \ -m\epsilon < y < m\epsilon \right\} \subset\Lambda_n\Omega.
\end{align*}

Then, when $n \geq N$ 
\begin{align*}
K_{\Lambda_n\Omega}(w_0, w_0+ite_1) \leq K_{S}(w_0, w_0+ite_1)
\end{align*}
and so 
\begin{align*}
\sup_{n \geq N} K_{\Lambda_n\Omega}(w_0, w_0+ite_1) < +\infty.
\end{align*}
Hence Lemma~\ref{lem:included_in_D} implies that $w_0 + ite_1 \in D$. 

\end{proof}

\section{The geometry of the rescaled domain}

For the rest of this section suppose that $D \subset \Cb^d$ is a domain where
\begin{align*}
\Pc_\alpha \subset D \subset \Pc_{\beta}
\end{align*}
for some  $0 < \beta < \alpha$.

Define
\begin{align*}
\Hc_D := D \cap \Cb \cdot e_1 =  \left\{ (z,0,\dots, 0) \in \Cb^d : { \rm Re}(z) > 0\right\}
\end{align*}
and 
\begin{align*}
\Hc := \{ z\in \Cb : { \rm Re}(z) > 0\}.
\end{align*}

\begin{observation}\label{obs:dist_equality} If $z,w \in \Hc$, then
\begin{align*}
K_{D}\Big((z,0,\dots,0), (w,0,\dots,0)\Big) = K_{\Hc}(z,w).
\end{align*}
\end{observation}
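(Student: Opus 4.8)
The plan is to prove the two inequalities separately, each as a one-line application of the distance-decreasing property of holomorphic maps recorded in Observation~\ref{obs:monotone}; the identity then follows by combining them.

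First I would establish $K_D((z,0,\dots,0),(w,0,\dots,0)) \le K_\Hc(z,w)$. For this, consider the inclusion $\iota : \Hc \to D$, $\iota(z) = (z,0,\dots,0)$. This map is holomorphic and takes values in $D$ precisely because the slice $\Hc_D = \{(z,0,\dots,0) : {\rm Re}(z) > 0\}$ is contained in $D$ (indeed it equals $D \cap \Cb \cdot e_1$). Applying Observation~\ref{obs:monotone} to $\iota$ immediately gives $K_D(\iota(z),\iota(w)) \le K_\Hc(z,w)$, which is the desired inequality.

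For the reverse inequality $K_\Hc(z,w) \le K_D((z,0,\dots,0),(w,0,\dots,0))$, the key point is that the linear projection $P(z_1,\dots,z_d) = z_1$ maps $D$ into $\Hc$. This is where the hypothesis $D \subset \Pc_\beta$ enters: every point $(z_1,\dots,z_d) \in D$ satisfies ${\rm Re}(z_1) > \beta \sum_{j=2}^d \abs{z_j}^2 \ge 0$, so $P(D) \subset \Hc$. Since $P$ is holomorphic and $P(z,0,\dots,0) = z$, a second application of Observation~\ref{obs:monotone} --- now to $P : D \to \Hc$ --- yields $K_\Hc(z,w) = K_\Hc(P(z,0,\dots,0), P(w,0,\dots,0)) \le K_D((z,0,\dots,0),(w,0,\dots,0))$.

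There is no serious obstacle here: the entire content is the recognition that the slice carries both a holomorphic inclusion into $D$ and a holomorphic retraction from $D$ (realized by the first-coordinate projection $P$), so it is a holomorphic retract and the two Kobayashi distances must coincide. The only point requiring any care is verifying that $P$ genuinely lands in the right half-plane $\Hc$, which is immediate from the inclusion $D \subset \Pc_\beta$.
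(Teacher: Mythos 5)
Your proposal is correct and is essentially identical to the paper's own proof: both directions are obtained by applying the distance-decreasing property (Observation~\ref{obs:monotone}) to the inclusion $\iota(z)=(z,0,\dots,0)$ and to the first-coordinate projection, with the containment $D \subset \Pc_\beta$ guaranteeing that the projection lands in $\Hc$. No gaps.
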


\begin{proof} The inclusion map 
\begin{align*}
\iota&: \Hc \rightarrow D\\
\iota&(z) = (z,0,\dots, 0)
\end{align*}
implies that 
\begin{align*}
K_{D}\Big((z,0,\dots,0), (w,0,\dots,0)\Big) \leq K_{\Hc}(z,w).
\end{align*}
for all $z,w \in \Hc$. Since 
\begin{align*}
D \subset \Pc_\beta \subset \left\{ (z_1,\dots, z_d) \in \Cb^d : { \rm Im}(z_1) > 0\right\},
\end{align*}
the projection map 
\begin{align*}
\pi&: D \rightarrow \Hc\\
\pi&(z_1,\dots,z_d) = z_1
\end{align*}
implies that 
\begin{align*}
K_{\Hc}(z,w) \leq K_{D}\Big((z,0,\dots,0), (w,0,\dots,0)\Big)
\end{align*}
for all $z,w \in \Hc$. 
\end{proof}

Observation~\ref{obs:dist_equality} implies that $\Hc_D$ can be parametrized to be a complex geodesic (see Definition~\ref{defn:cplx_geod}). The next Observation proves that, up to parametrization, this is the only complex geodesic joining two points in $\Hc_D$.  

\begin{proposition}\label{prop:unique_slice} Suppose $p,q \in \Hc_D$ are distinct and $\varphi : \Db \rightarrow D$ is a complex geodesic with $p,q \in \varphi(\Db)$. Then there exists $\phi \in \Aut(\Db)$ such that 
\begin{align*}
(\varphi \circ \phi)(\lambda) = \left(\frac{1+\lambda}{1-\lambda},0,\dots,0\right)
\end{align*}
for all $\lambda \in \Db$. In particular, $\varphi(\Db) = \Hc_D$. 
\end{proposition}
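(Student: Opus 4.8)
The plan is to reduce the problem to a one-dimensional rigidity statement via the projection $\pi$, and then use the squeezing $D \subset \Pc_\beta$ together with the uniqueness of complex geodesics in the ball. Write $\varphi = (\varphi_1,\dots,\varphi_d)$, and let $z_0 \neq w_0$ be such that $\varphi(z_0) = p$ and $\varphi(w_0) = q$. Since $D \subset \Pc_\beta \subset \{{\rm Re}(z_1) > 0\}$, the first coordinate $\varphi_1 = \pi\circ\varphi$ maps $\Db$ into $\Hc$, where $\pi$ is the projection from the proof of Observation~\ref{obs:dist_equality}. My first step is to show that $\varphi_1 : \Db \to \Hc$ is a biholomorphism. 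Writing $p = (p_1,0,\dots,0)$ and $q = (q_1,0,\dots,0)$, I have $\varphi_1(z_0) = p_1$, $\varphi_1(w_0) = q_1$, and by Observation~\ref{obs:dist_equality} together with the complex-geodesic hypothesis,
\[
K_\Hc(\varphi_1(z_0), \varphi_1(w_0)) = K_\Hc(p_1,q_1) = K_D(p,q) = K_\Db(z_0,w_0).
\]
Composing $\varphi_1$ with the inverse Cayley transform yields a holomorphic self-map of $\Db$ preserving the Poincar\'e distance of the distinct points $z_0, w_0$, so the Schwarz--Pick lemma forces it to be an automorphism; hence $\varphi_1 : \Db \to \Hc$ is a biholomorphism.

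Next I would regard $\varphi$ as a holomorphic disk into the convex superdomain $\Pc_\beta \supset D$, which is biholomorphic to the unit ball. Since the inclusion $D \hookrightarrow \Pc_\beta$ is distance nonincreasing (Observation~\ref{obs:monotone}) and equality was just exhibited at the pair $(p,q)$,
\[
K_{\Pc_\beta}(\varphi(z_0), \varphi(w_0)) \le K_D(\varphi(z_0),\varphi(w_0)) = K_\Db(z_0,w_0),
\]
while the reverse inequality always holds; thus $\varphi$ realizes the Kobayashi distance of $\varphi(z_0),\varphi(w_0)$ inside $\Pc_\beta$. By the standard criterion that a holomorphic disk preserving the Kobayashi distance of a single pair of points in a bounded convex domain is a complex geodesic (Vesentini's theorem, via Lempert theory), $\varphi$ is a complex geodesic of $\Pc_\beta$. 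On the other hand, Observation~\ref{obs:dist_equality} applies verbatim with $\Pc_\beta$ in place of $D$ (as $\Pc_\alpha \subset \Pc_\beta \subset \Pc_\beta$), so the slice $\{(z,0,\dots,0) : {\rm Re}(z) > 0\}$ is itself the image of a complex geodesic of $\Pc_\beta$ through $p$ and $q$. Since the complex geodesic through two distinct points of the ball is unique up to reparametrization, the two images coincide:
\[
\varphi(\Db) = \{(z,0,\dots,0) \in \Cb^d : {\rm Re}(z) > 0\} = \Hc_D.
\]

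Finally, knowing $\varphi(\Db) = \Hc_D$ I may write $\varphi = (\varphi_1,0,\dots,0)$ with $\varphi_1 : \Db \to \Hc$ the biholomorphism from the first step. Letting $C(\lambda) = \tfrac{1+\lambda}{1-\lambda}$ be the Cayley biholomorphism $\Db \to \Hc$ and setting $\phi := \varphi_1^{-1}\circ C \in \Aut(\Db)$, I obtain
\[
(\varphi\circ\phi)(\lambda) = \left(\frac{1+\lambda}{1-\lambda},0,\dots,0\right),
\]
which gives the claim and, in particular, $\varphi(\Db) = \Hc_D$.

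I expect the main obstacle to be the passage from $D$ to $\Pc_\beta$: a complex geodesic of $D$ need not a priori be a complex geodesic of the larger $\Pc_\beta$, since the inclusion only decreases distances. What rescues the argument is that distance \emph{equality} is inherited at the single pair $(p,q)$ through Observation~\ref{obs:dist_equality} applied to both $D$ and $\Pc_\beta$; after that, the rigidity of complex geodesics in the ball — both the one-pair criterion and uniqueness through two points — finishes the proof. In a full write-up I would take care to justify these two ball-geometry facts (e.g.\ via the several-variable Schwarz lemma) rather than invoke them as black boxes.
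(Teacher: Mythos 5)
Your proof is correct, and its first and last steps coincide with the paper's: both show that the first coordinate of $\varphi$, transported to $\Hc$ by a Cayley map, preserves the Kobayashi distance of one pair of points (via Observation~\ref{obs:dist_equality} and the complex-geodesic hypothesis) and hence is an automorphism by Schwarz--Pick, and both finish by setting $\phi=\varphi_1^{-1}\circ\bigl(\tfrac{1+\lambda}{1-\lambda}\bigr)$. The middle step, however, is genuinely different. The paper normalizes so that the first coordinate of $\wh{\varphi}:=\varphi\circ f$ is the identity on $\Hc$, then uses $D\subset\Pc_\beta$ to get $\sum_{j\geq 2}\abs{\wh{\varphi}_j(\lambda)}^2\to 0$ as $\lambda\to i\Rb$, and kills the remaining coordinates by Schwarz reflection plus the identity theorem --- a self-contained, elementary argument that the paper reuses in the proof of Proposition~\ref{prop:BSD}. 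You instead view $\varphi$ as a disk in the ball $\Pc_\beta$, verify the one-pair equality $K_{\Pc_\beta}(p,q)=K_{\Db}(z_0,w_0)$, and invoke two pieces of ball rigidity as black boxes: the Lempert--Vesentini criterion that one-pair equality in a (biholomorph of a) bounded convex domain forces a complex geodesic, and the uniqueness of the complex geodesic through two distinct points of the ball. That route works and is arguably more conceptual, at the cost of importing standard facts the paper never proves; in a full write-up you would indeed need to supply them (e.g.\ via the equality case of the Schwarz lemma for the ball). One sentence needs repair: ``the reverse inequality always holds'' is not literally true for an arbitrary holomorphic disk --- the forward inequality $K_{\Pc_\beta}(\varphi(z_0),\varphi(w_0))\leq K_{\Db}(z_0,w_0)$ is the automatic one. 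The needed lower bound holds here only because $p,q$ lie on the slice, so that the projection argument of Observation~\ref{obs:dist_equality}, applied verbatim to $\Pc_\beta$, gives $K_{\Pc_\beta}(p,q)=K_{\Hc}(p_1,q_1)=K_{\Db}(z_0,w_0)$; you do cite this in the following sentence, so the gap is one of wording rather than substance.
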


\begin{proof} By hypothesis
\begin{align*}
p = (p_1,0,\dots,0) \text{ and } q = (q_1,0,\dots,0)
\end{align*}
for some $p_1,q_1 \in \Hc$.

Let $f : \Hc \rightarrow \Db$ be a biholomorphism and consider the map
\begin{align*}
\wh{\varphi} := \varphi \circ f: \Hc \rightarrow D.
\end{align*}
Let $\wh{\varphi}_1, \dots, \wh{\varphi}_d$ denote the coordinate functions of $\wh{\varphi}$. Since 
\begin{align*}
D \subset \Pc_\beta \subset \left\{ (z_1,\dots, z_d) \in \Cb^d : { \rm Re}(z_1) > 0\right\},
\end{align*}
we have $\wh{\varphi}_1(\Hc) \subset \Hc$. Further by Observation~\ref{obs:dist_equality}, if $\wh{\varphi}(\lambda_1) = p$ and $\wh{\varphi}(\lambda_2) = q$, then
\begin{align*}
K_{\Hc}(\lambda_1,\lambda_2) &= K_{\Db}(f(\lambda_1), f(\lambda_2)) = K_D\big(\varphi(f(\lambda_1)), \varphi(f(\lambda_2))\big) =  K_{D}(p,q) \\
& = K_{\Hc}(p_1,q_1) = K_{\Hc}(\wh{\varphi}_1(\lambda_1),\wh{\varphi}_1(\lambda_2)).
\end{align*}
So by the Schwarz lemma, $\wh{\varphi}_1$ is a biholomorphism of $\Hc$. Then by replacing $f$ with $f \circ \wh{\varphi}_1^{-1}$, we can assume that  $\wh{\varphi}_1 = \id$. 

We claim that $\wh{\varphi}_j \equiv 0$ for $2 \leq j \leq d$. Fix $t \in \Rb$, then since $D \subset \Pc_{\beta}$ we have 
\begin{align*}
\limsup_{\lambda \rightarrow it} \sum_{j=2}^d \abs{\wh{\varphi}_j(\lambda)}^2 \leq \limsup_{\lambda \rightarrow it} {\rm Re}\left(\wh{\varphi}_1(\lambda)\right) = \limsup_{\lambda \rightarrow it} {\rm Re}(\lambda)=0.
\end{align*}
So 
\begin{align*}
\lim_{\lambda \rightarrow it} \wh{\varphi}_j(\lambda) = 0
\end{align*}
for $2 \leq j \leq d$. So $\wh{\varphi}_j$ extends continuously to $\Hc \cup i\Rb$ with $\wh{\varphi}_j |_{i\Rb} \equiv 0$. So by the Schwarz reflection principle, $\wh{\varphi}_j$ extends holomorphically to all of $\Cb$. But then, since $\wh{\varphi}_j |_{i\Rb} \equiv 0$, we have $\wh{\varphi}_j \equiv 0$. 

So 
\begin{align*}
\varphi(\lambda) = \left(\varphi_1(\lambda),0,\dots,0\right)
\end{align*}
where $\varphi_1 : \Db \rightarrow \Hc$ is a biholomorphism. Finally, define $\phi \in \Aut(\Db)$ by 
\begin{align*}
\phi(\lambda) = \varphi_1^{-1}\left(\frac{1+\lambda}{1-\lambda}\right).
\end{align*}
Then 
\begin{equation*}
(\varphi \circ \phi)(\lambda) = \left(\frac{1+\lambda}{1-\lambda},0,\dots,0\right)
\end{equation*}
for all $\lambda \in \Db$. 
\end{proof}

\begin{proposition}\label{prop:bd_distance} Suppose $(z_n)_{n\geq 0},(w_n)_{n\geq0}$ are sequences in $D$ with
\begin{align*}
\lim_{n \rightarrow \infty} z_n = \xi \in (i\Rb) \times \{(0,\dots,0)\} = \overline{\Hc}_D \cap \partial D
\end{align*}
and 
\begin{align*}
\limsup_{n \rightarrow \infty} K_D(w_n,z_n) < +\infty,
\end{align*}
then
\begin{align*}
\lim_{n \rightarrow \infty} w_n = \xi.
\end{align*}
\end{proposition}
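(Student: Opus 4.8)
The plan is to reduce the estimate to the unit ball, where the Kobayashi distance is given by an explicit formula.

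First I would replace $D$ by the larger model $\Pc_\beta$. Since $D \subset \Pc_\beta$, the inclusion $D \hookrightarrow \Pc_\beta$ is holomorphic, so Observation~\ref{obs:monotone} gives $K_{\Pc_\beta}(p,q) \le K_D(p,q)$ for all $p,q \in D$. Hence
\[
\limsup_{n \to \infty} K_{\Pc_\beta}(w_n,z_n) \le \limsup_{n \to \infty} K_D(w_n,z_n) =: M < +\infty,
\]
and it suffices to prove the conclusion with $D$ replaced by $\Pc_\beta$. Recall that $\Pc_\beta$ is biholomorphic to $\Bb_d$ via a Cayley transform $\Phi$, which is rational and restricts to a biholomorphism of a neighborhood of the finite boundary point $\xi = (it_0,0,\dots,0)$ onto a neighborhood of an honest boundary point $\eta := \Phi(\xi) \in \partial \Bb_d$ (only the boundary point corresponding to infinity is omitted from the boundary correspondence). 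Thus $\zeta_n := \Phi(z_n) \to \eta$, while $\omega_n := \Phi(w_n)$ satisfies $K_{\Bb_d}(\omega_n,\zeta_n) = K_{\Pc_\beta}(w_n,z_n) \le M$ for large $n$; once I show $\omega_n \to \eta$, continuity of $\Phi^{-1}$ at $\eta$ yields $w_n \to \xi$.

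For the ball I would use the explicit formula
\[
\tanh^2 K_{\Bb_d}(\omega,\zeta) = 1 - \frac{(1-\norm{\omega}^2)(1-\norm{\zeta}^2)}{\abs{1-\ip{\omega,\zeta}}^2}.
\]
Setting $\rho := \tanh M < 1$, the bound $K_{\Bb_d}(\omega_n,\zeta_n) \le M$ rearranges to
\[
(1-\rho^2)\abs{1-\ip{\omega_n,\zeta_n}}^2 \le (1-\norm{\zeta_n}^2)(1-\norm{\omega_n}^2) \le 1-\norm{\zeta_n}^2.
\]
Since $\zeta_n \to \eta \in \partial \Bb_d$ we have $1-\norm{\zeta_n}^2 \to 0$, so $\ip{\omega_n,\zeta_n} \to 1$ and therefore $\ip{\omega_n,\eta} \to 1$ (the difference is bounded by $\norm{\omega_n}\,\norm{\zeta_n-\eta} \to 0$). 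Writing $\omega_n = \ip{\omega_n,\eta}\eta + u_n$ with $u_n \perp \eta$, the constraint $\norm{\omega_n}^2 = \abs{\ip{\omega_n,\eta}}^2 + \norm{u_n}^2 \le 1$ forces $\norm{u_n} \to 0$, whence $\omega_n \to \eta$, as desired.

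I expect the only delicate points to be bookkeeping rather than substance: verifying that the Cayley transform is regular at the finite boundary point $\xi$ (so that $z_n \to \xi$ really does give $\zeta_n \to \eta$ with $\eta$ a genuine boundary point) and recording the explicit ball distance. If one prefers to avoid $\Phi$, the same conclusion follows by contradiction: assuming $w_n \not\to \xi$, pass to a subsequence converging in $\overline{\Pc_\beta} \cup \{\infty\}$ and rule out an interior limit using completeness of $\Pc_\beta$ together with the triangle inequality, and rule out a boundary limit distinct from $\xi$ using that the Kobayashi distance between sequences tending to distinct boundary points of the ball blows up.
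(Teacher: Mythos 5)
Your proof is correct, and it shares the paper's first move --- since $D \subset \Pc_\beta$ and $(i\Rb)\times\{0\} \subset \partial D \cap \partial\Pc_\beta$, monotonicity of the Kobayashi distance reduces everything to the model domain $\Pc_\beta$ --- but the endgame is executed differently. The paper stays in the unbounded model and uses convexity: it takes the complex supporting hyperplane $H$ at $\xi$ (which meets $\overline{\Pc_\beta}$ only at $\xi$ by strict convexity) and invokes the general lower bound $K_{\Pc_\beta}(z,w) \geq \tfrac{1}{2}\log\bigl(d_{\Euc}(w,H)/d_{\Euc}(z,H)\bigr)$ valid on any convex domain, so that $d_{\Euc}(z_n,H)\to 0$ plus the distance bound forces $d_{\Euc}(w_n,H)\to 0$ and hence $w_n \to \xi$. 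You instead transport the problem to $\Bb_d$ by the Cayley transform and use the explicit formula $\tanh^2 K_{\Bb_d} = 1 - (1-\norm{\omega}^2)(1-\norm{\zeta}^2)/\abs{1-\ip{\omega,\zeta}}^2$; your algebra ($\ip{\omega_n,\zeta_n}\to 1$, then the orthogonal decomposition forcing $\omega_n\to\eta$) is sound, and the two points you flag as bookkeeping really are only bookkeeping: the Cayley transform for $\Pc_\beta$ has its pole at a point with $\Real z_1 < 0$, hence is regular at every finite boundary point, and $\eta$ is not the image of $\infty$, so continuity of $\Phi^{\pm1}$ near $\xi$ and $\eta$ is automatic. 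The trade-off is that the paper's argument is shorter and generalizes to an arbitrary convex domain with a strictly convex boundary point (at the cost of citing an external lemma), whereas yours is completely explicit and self-contained but tied to the ball; your sketched contradiction alternative would also work but needs the same "distinct boundary points are infinitely far apart'' input that the explicit formula supplies. One cosmetic point: since you set $M:=\limsup_n K_D(w_n,z_n)$, the bound $K_{\Bb_d}(\omega_n,\zeta_n)\leq M$ only holds up to an additive $\epsilon$ for large $n$; replace $M$ by $M+1$, say, throughout.
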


\begin{proof}
Notice that 
\begin{align*}
K_{\Pc_\beta}(z,w) \leq K_D(z,w)
\end{align*}
for all $z,w \in D$ and 
\begin{align*}
(i\Rb) \times \{(0,\dots,0)\} \subset \partial D \cap \partial \Pc_{\beta}.
\end{align*}
So this proposition follows immediately from the well understood geometry of $(\Pc_\beta, K_{\Pc_{\beta}})$ - it is a standard model of complex hyperbolic $d$-space. 

For the reader's convenience we provide a complete argument. Since $\Pc_{\beta}$ is convex, there exists $H \subset \Cb^d$ a complex affine hyperplane where $H \cap \Pc_{\beta} =\emptyset$ and $\xi \in H$. Since $\Pc_{\beta}$ is strictly convex, $H \cap \partial \Pc_{\beta} = \{\xi\}$. By standard estimates for the Kobayashi distance on a convex domain, see for instance~\cite[Lemma 4.2]{Z2017b}, 
\begin{align*}
K_{\Pc_{\beta}}(z,w) \geq \frac{1}{2} \log \frac{d_{\Euc}(w,H)}{d_{\Euc}(z,H)} 
\end{align*}
for all $z,w \in \Pc_{\beta}$. So we must have 
\begin{align*}
\lim_{n \rightarrow \infty} d_{\Euc}(w_n,H) = 0.
\end{align*}
Then, since $H \cap \partial \Pc_{\beta} = \{\xi\}$, we have $\lim_{n \rightarrow \infty} w_n = \xi$. 
\end{proof}

\section{The domain is symmetric}

In this section we prove the following. 

\begin{proposition}\label{prop:BSD} Suppose $\Omega \subset \Cb^d$ is a bounded domain which covers a compact manifold. If $\partial \Omega$ is $\Cc^{1,1}$, then $\Omega$ is a bounded symmetric domain. 
\end{proposition}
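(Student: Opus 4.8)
The plan is to combine Step~1 with the Frankel--Nadel rigidity theorem and then to exclude the product alternative using exactly the two features of $D$ produced in Step~1: the inclusion $\Pc_\alpha\subset D$ and the presence of the one-parameter group $u_t$ in $\Aut(D)$.

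First I set up the reduction. Let $M:=\Gamma\backslash\Omega$; as recorded in the excerpt, $c_1(M)<0$, and by Proposition~\ref{prop:simply_connected} the domain $\Omega$ is simply connected, hence is the universal cover of $M$. By Proposition~\ref{prop:rescaling} (Step~1), $\Omega$ is biholomorphic to a domain $D$ with $\Pc_\alpha\subset D\subset\Pc_\beta$ and with $u_t\in\Aut(D)$; in particular $\Aut(\Omega)\cong\Aut(D)$ is non-discrete. Theorem~\ref{thm:Frankel} then leaves two cases: either $\Omega$ is a bounded symmetric domain, and we are done, or $\Omega$ --- equivalently $D$ --- is biholomorphic to a non-trivial product $D_1\times D_2$ with $D_1$ a bounded symmetric domain, $\Aut(D_2)$ discrete, and $\dim D_2\geq 1$. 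It remains to contradict the second case, so I fix a biholomorphism $\Phi=(\Phi_1,\Phi_2)\colon D\to D_1\times D_2$, where $\Phi_2$ denotes the $D_2$-component.

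The key structural step is to show that $\Phi_2$ is independent of $z_1$. Since $\Aut(D_2)$ is discrete its identity component $\Aut_0(D_2)$ is trivial, and for bounded domains the identity component of the automorphism group of a product splits, $\Aut_0(D_1\times D_2)=\Aut_0(D_1)\times\Aut_0(D_2)$. (Indeed, the Bergman metric of $\Omega$, transported to $D_1\times D_2$, is the Riemannian product of the Bergman metrics of the factors, because $\log\kappa_{D_1\times D_2}$ is additive; it is complete by the cocompact $\Gamma$-action, and the identity component of its holomorphic isometry group preserves each de Rham factor.) Hence the connected one-parameter group $t\mapsto\Phi\circ u_t\circ\Phi^{-1}$ has trivial $D_2$-component, i.e. $\Phi\circ u_t\circ\Phi^{-1}=(\psi_t,\id)$ for some $\psi_t\in\Aut(D_1)$. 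Writing $z=(z_1,z')$ with $z'=(z_2,\dots,z_d)$, this means
\[
\Phi_2(z_1+it,z')=\Phi_2(z_1,z')\quad\text{for all } t\in\Rb.
\]
As $\Phi_2$ is holomorphic, differentiating in $t$ gives $\partial\Phi_2/\partial z_1\equiv 0$, so $\Phi_2$ does not depend on $z_1$. Now I use $\Pc_\alpha\subset D$ together with Liouville's theorem. Let $\pi'(z_1,z')=z'$. For every $z'\in\Cb^{d-1}$ the point $(\alpha\norm{z'}^2+1,z')$ lies in $\Pc_\alpha\subset D$, so $\pi'(\Pc_\alpha)=\Cb^{d-1}$. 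Since $\Phi_2$ is independent of $z_1$ on $\Pc_\alpha$, it descends to a holomorphic map $g\colon\Cb^{d-1}\to D_2$. As $D_2$ is biholomorphic to a bounded domain, $g$ has bounded components, hence is constant by Liouville. Thus $\Phi_2$ is constant on $\Pc_\alpha$, and so constant on all of $D$ by the identity theorem. But then $\Phi(D)\subset D_1\times\{q\}$ for a single $q\in D_2$, contradicting surjectivity of $\Phi$ onto $D_1\times D_2$ with $\dim D_2\geq 1$. This excludes the product case, so $\Omega$ is a bounded symmetric domain.

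The main obstacle is the structural step: justifying that $\Phi\circ u_t\circ\Phi^{-1}$ acts trivially on the $D_2$-factor, i.e. the splitting $\Aut_0(D_1\times D_2)=\Aut_0(D_1)\times\Aut_0(D_2)$. I would handle this exactly through the product structure of the (complete) invariant Kähler metric and the de Rham decomposition, as sketched above. Two minor points also need care: the holomorphicity of $g$ on all of $\Cb^{d-1}$ (clear, since locally $g(z')=\Phi_2(c,z')$ for a large constant $c$), and the fact that $D_2$, being realized as a closed complex submanifold $\{p\}\times D_2$ of the bounded domain $\Omega$, carries non-constant bounded holomorphic functions so that Liouville applies. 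I note that one can alternatively avoid Liouville and reach the contradiction using the boundary geometry of Proposition~\ref{prop:bd_distance}: taking $z_n=(s_n,0,\dots,0)\in\Hc_D$ with $s_n\to 0^+$ and $w_n:=\Phi^{-1}(\Phi_1(z_n),c'')$ for a fixed $c''\neq g(0)$ gives $K_D(w_n,z_n)=K_{D_2}(c'',g(0))$ bounded while $z_n\to 0\in(i\Rb)\times\{0\}$, so $w_n\to 0$ by Proposition~\ref{prop:bd_distance}; but then $\pi'(w_n)\to 0$ and continuity of $g$ forces $c''=g(0)$, a contradiction.
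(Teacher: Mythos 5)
Your proof is correct and reaches the contradiction by a genuinely different (and somewhat shorter) route than the paper. The reduction via Proposition~\ref{prop:simply_connected}, Proposition~\ref{prop:rescaling} and Theorem~\ref{thm:Frankel} is identical, and both arguments hinge on the same splitting $\Aut_0(D_1\times D_2)=\Aut_0(D_1)\times\Aut_0(D_2)$, which the paper invokes without comment (``Since $\Aut_0(D_1\times D_2)=\Aut_0(D_1)\times\{\id\}$\dots'') and which you sketch via the product Bergman metric and the de~Rham decomposition; your sketch has a small wrinkle (a possible Euclidean de~Rham factor split between the two sides would not be preserved by $\operatorname{Isom}_0$), but the splitting itself is a classical fact for bounded domains, so you are on the same footing as the paper here. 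Where you diverge is the exclusion of the product case. The paper conjugates $\{\Id\}\times\Aut(D_2)$ into a discrete group $G\leq\Aut(D)$ commuting with $\Aut_0(D)$, uses the complex-analyticity of $\Aut_0(D)$-orbits together with the flow $u_t$ to get $\Hc_D\subset\Aut_0(D)\cdot e_1$, bounds $K_D(z,g(z))$ on $\Hc_D$, and then uses Proposition~\ref{prop:bd_distance}, Schwarz reflection and the identity theorem to show $g(e_1)=e_1$, so that properness forces $G$ to be finite --- contradicting that $\Aut(D_2)$ is \emph{infinite} discrete (a point slightly beyond the literal statement of Theorem~\ref{thm:Frankel}). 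You instead observe that the one-parameter group $\Phi\circ u_t\circ\Phi^{-1}$ lies in $\Aut_0(D_1)\times\{\id\}$, so the $D_2$-component $\Phi_2$ satisfies $\partial\Phi_2/\partial z_1\equiv 0$, descends over the connected half-plane fibers of $\Pc_\alpha$ to an entire map $\Cb^{d-1}\to D_2$, and is therefore constant by Liouville (since $D_2$ embeds boundedly), contradicting surjectivity. This buys you something real: you need only that $\Aut(D_2)$ is discrete and $\dim D_2\geq 1$, and you avoid Proposition~\ref{prop:bd_distance}, the reflection argument, and the properness/finiteness step entirely; the paper's route, in exchange, produces the stronger intermediate fact that every automorphism commuting with $\Aut_0(D)$ fixes $\Hc_D$ pointwise. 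One caution: your ``alternative ending'' that bypasses Liouville tacitly assumes $\Phi_2=g\circ\pi'$ at the points $w_n$, which requires the fibers of $D$ (not just of $\Pc_\alpha$) over $\pi'(w_n)$ to be connected; stick with the Liouville version, which only ever evaluates $g$ on $\Pc_\alpha$ and then spreads constancy to $D$ by the identity theorem.
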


Before starting the proof, we recall the following notation. 

\begin{definition} 
Given a domain $\Omega \subset \Cb^d$, let $\Aut_0(\Omega)$ denote the connected component of the identity in $\Aut(\Omega)$. 
\end{definition}

\begin{proof}[Proof of Proposition~\ref{prop:BSD}] Proposition~\ref{prop:rescaling} implies that $\Aut(\Omega)$ is non-discrete and Proposition~\ref{prop:simply_connected} implies that $\Omega$ is simply connected. Hence by Theorem~\ref{thm:Frankel} either 
\begin{enumerate}
\item $\Omega$ is a bounded symmetric domain
\item  $\Omega$ is biholomorphic to $D_1\times D_2$ where $D_1$ is a bounded symmetric domain and $\Aut(D_2)$ is an infinite discrete group. 
\end{enumerate}
We assume the second possibility and derive a contradiction. 

By Proposition~\ref{prop:rescaling}, there exists a biholomorphism $F:D_1 \times D_2 \rightarrow D$ where $D \subset \Cb^d$ is a domain such that
\begin{align*}
\Pc_\alpha \subset D \subset \Pc_{\beta}
\end{align*}
for some $\alpha > \beta > 0$ and $\Aut(D)$ contains the one-parameter subgroup 
\begin{align*}
u_t(z_1,\dots,z_d) = (z_1+it,z_2,\dots, z_d).
\end{align*}

Define 
\begin{align*}
G : = F \circ \Big(\{\Id\} \times \Aut(D_2) \Big)\circ F^{-1} \leq \Aut(D). 
\end{align*}
Then, by assumption, $G$ is an infinite discrete subgroup of $\Aut(D)$ and $G$ commutes with $\Aut_0(D)$. We will obtain a contradiction by establishing the following. 

\medskip

\noindent \textbf{Claim:} $G$ is a finite group. 

\medskip

Since $\Aut_0(D_1 \times D_2) = \Aut_0(D_1) \times \{\id\}$, for any $z=(z_1,z_2) \in D_1 \times D_2$ we have
\begin{align*}
\Aut_0(D_1 \times D_2) \cdot z = D_1 \times \{z_2\}.
\end{align*}
In particular, the orbit $\Aut_0(D_1 \times D_2) \cdot z$ is a complex analytic variety in $D_1 \times D_2$. Thus for any $w \in D$, the orbit $\Aut_0(D)\cdot w$ is a complex analytic variety in $D$. Further, since $\Aut(D)$ contains the one-parameter group 
\begin{align*}
u_t(z_1,\dots,z_d) = (z_1+it, z_2, \dots, z_d),
\end{align*}
for any $w_0 \in D$ and $w \in \Aut_0(D)\cdot w_0$, the tangent space $T_w( \Aut_0(D)\cdot w_0)$ of $\Aut_0(D)\cdot w_0$ at $w$ contains $ie_1$. Thus, since  $\Aut_0(D)\cdot w_0$ is a complex analytic variety, 
\begin{align}
\label{eq:tangent_planes} 
\Cb \cdot e_1 \subset T_{w} \Big(\Aut_0(D)\cdot w_0\Big).
\end{align}

As before, let $\Hc_D := \{ (z,0,\dots,0) : { \rm Re}(z) > 0\}$. Then by Equation~\eqref{eq:tangent_planes} 
\begin{align*}
\Hc_D \subset \Aut_0(D)\cdot e_1.
\end{align*}
So for each $z \in \Hc_D$, there exists $\phi_z \in \Aut_0(D)$ such that $\phi_z (e_1) = z$. 

Now suppose $g \in G$. Then for $z \in \Hc_D$ we have 
\begin{align*}
K_D(z, g(z)) = K_D(\phi_z(e_1), g\phi_z(e_1)) = K_D(\phi_z(e_1), \phi_zg(e_1))=K_D(e_1, g(e_1))
\end{align*}
since $G$ commutes with $\Aut_0(D)$. Hence 
\begin{align}
\label{eq:bd_distance}
\sup_{z \in \Hc_D} K_D(z,g(z)) =K_D(e_1, g(e_1))< +\infty.
\end{align}

Let $\Hc :=\{\lambda \in \Cb : { \rm Re}(\lambda) > 0\}$ and define the map
\begin{align*}
\psi&: \Hc \rightarrow D \\
\psi&(\lambda) = g(\lambda,0,\dots,0).
\end{align*}
Then let $\psi_1,\dots, \psi_d$ denote the coordinate functions of $\psi$. By Proposition~\ref{prop:bd_distance} and Equation~\eqref{eq:bd_distance}, if $t \in \Rb$, then 
\begin{align*}
\lim_{\lambda \rightarrow it} \psi(\lambda) = (it,0,\dots,0).
\end{align*}
Thus by applying the Schwarz reflection principle to each $\psi_j$, we can extend $\psi$ to a map $\Cb \rightarrow \Cb^d$ such that
\begin{align*}
\psi(it) = (it,0,\dots,0)
\end{align*}
for $t \in\Rb$. But then by the identity theorem for holomorphic functions we have
\begin{align*}
\psi(\lambda) = (\lambda,0,\dots,0)
\end{align*}
for all $\lambda \in \Cb$. In particular, $g(e_1) = e_1$. 

Since $g \in G$ was arbitrary we see that 
\begin{align*}
G \cdot e_1 = e_1.
\end{align*}
Since $D$ is biholomorphic to a bounded domain, $\Aut(D)$ acts properly on $D$ and hence $G$ must be compact. Since $G$ is also discrete, we see that $G$ is finite. Thus we have a contradiction. 
\end{proof} 

\section{Polydisks in bounded symmetric domains} 

In this section we recall some facts about polydisks in bounded symmetric domains. 

\begin{definition} Suppose $\Omega$ is a bounded symmetric domain. The \emph{real rank of $\Omega$} is the largest integer $r$ such that there exists a holomorphic isometric embedding $f:(\Db^r, K_{\Db^r}) \rightarrow (\Omega, K_\Omega)$.
\end{definition}

From the characterization of bounded symmetric domains, every bounded symmetric domain has real rank at least one. Moreover,  the real rank is one if and only if the symmetric domain is biholomorphic to the unit ball. 

The next result says that there are many isometric embeddings of polydisks, see~\cite[pg. 280]{W1972}.

\begin{theorem}[Polydisk Theorem]\label{thm:polydisk} Suppose $\Omega$ is a bounded symmetric domain with real rank $r$. If  $z_1, z_2 \in \Omega$, then there exist a holomorphic isometric embedding $f:(\Db^r, K_{\Db^r}) \rightarrow (\Omega, K_\Omega)$ whose image contains $z_1, z_2$. \end{theorem}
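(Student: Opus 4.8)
The plan is to reduce the statement to the standard maximal polydisk through a fixed base point and then to move that polydisk by an automorphism. Write $G = \Aut_0(\Omega)$, which is a semisimple Lie group acting transitively on $\Omega$; fix a base point $o \in \Omega$, and let $K \leq G$ be the stabilizer of $o$. The Cartan decomposition $\gL = \kL \oplus \pL$ identifies $\pL$ with the tangent space $T_o\Omega$, and a maximal abelian subspace $\mathfrak{a} \subset \pL$ has dimension equal to the real rank $r$.

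First I would invoke the base-point form of the theorem from the structure theory of Hermitian symmetric spaces: there is a holomorphic embedding $f_0 : \Db^r \rightarrow \Omega$ with $f_0(0) = o$ whose image $P := f_0(\Db^r)$ is the standard polydisk built from a maximal system of strongly orthogonal noncompact roots, with $\mathfrak{a} \subset T_oP$. The point requiring attention is that $f_0$ is a Kobayashi isometric embedding. The inequality $K_\Omega(f_0(z), f_0(w)) \leq K_{\Db^r}(z,w)$ is immediate from Observation~\ref{obs:monotone}. For the reverse inequality I would use the holomorphic retraction $\rho : \Omega \rightarrow \Db^r$ onto the polydisk supplied by the structure theory (in the Harish-Chandra realization, $\rho$ is induced by the projection onto the coordinate subspace spanned by the strongly orthogonal root vectors), which satisfies $\rho \circ f_0 = \id_{\Db^r}$. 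Applying Observation~\ref{obs:monotone} to $\rho$ gives $K_{\Db^r}(z,w) = K_{\Db^r}(\rho f_0(z), \rho f_0(w)) \leq K_\Omega(f_0(z), f_0(w))$, whence equality.

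Next I would reduce an arbitrary pair of points to this model using the polar (KAK) decomposition $G = K(\exp \mathfrak{a})K$. Given $z_1, z_2 \in \Omega$, transitivity of $G$ lets me choose $h \in G$ with $h \cdot o = z_1$. Writing $h^{-1}z_2 = k_1 \exp(X) k_2 \cdot o$ with $k_1, k_2 \in K$ and $X \in \mathfrak{a}$, and using $k_2 \cdot o = o$, I obtain $k_1^{-1}(h^{-1}z_2) = \exp(X)\cdot o \in P$ while $k_1^{-1}\cdot o = o \in P$. Hence the automorphism $g := hk_1 \in \Aut(\Omega)$ carries $P$ to a polydisk containing $g \cdot o = z_1$ and $g \cdot (\exp(X)\cdot o) = z_2$. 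Since $g$ is a biholomorphism, Observation~\ref{obs:monotone} applied to both $g$ and $g^{-1}$ shows that $g$ is a Kobayashi isometry of $\Omega$, so $f := g \circ f_0 : \Db^r \rightarrow \Omega$ is again a holomorphic isometric embedding, now with $z_1, z_2 \in f(\Db^r)$, as required.

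I expect the main obstacle to be the isometry claim for the base-point polydisk, namely producing the holomorphic retraction $\rho : \Omega \rightarrow \Db^r$ with $\rho \circ f_0 = \id_{\Db^r}$ and identifying $P$ as a genuine totally geodesic $\Db^r$; this is precisely the structural content of Wolf's Polydisk Theorem and the strongly-orthogonal-root construction, which I would cite from~\cite[pg. 280]{W1972} rather than reprove. The remaining ingredients — transitivity of $G$, the polar decomposition, and the $\Aut(\Omega)$-invariance of $K_\Omega$ — are routine once the base-point isometric polydisk is in hand.
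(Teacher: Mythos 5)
The paper does not prove this statement at all: it is quoted as a known result with the citation~\cite[pg.~280]{W1972}, which is exactly where the structural content of your argument (the base-point maximal polydisk built from strongly orthogonal roots, together with the holomorphic retraction that makes it a Kobayashi-isometric copy of $\Db^r$) lives. Your derivation of the two-point version from the base-point version via transitivity and the $G=K\exp(\mathfrak{a})K$ decomposition is the standard one and is correct, so your treatment matches the paper's (cite the hard part, the rest is routine).
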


For any bounded symmetric domain $\Omega \subset \Cb^d$, Harish-Chandra constructed an embedding $F:\Omega \hookrightarrow \Cb^d$ whose image is convex and bounded, see~\cite[Chapter II, Section 4]{S1980}. Further, there exists a norm $\norm{\cdot}_{HC}$ on $\Cb^d$ such that 
\begin{align*}
F(\Omega) = \left\{ z \in \Cb^d : \norm{z}_{HC} < 1\right\}.
\end{align*}
We will use the following terminology. 

\begin{definition} A bounded symmetric domain $\Omega \subset \Cb^d$ is in \emph{standard form} if it coincides with the image of its Harish-Chandra embedding. 
\end{definition}

We now recall the following well known description of the Bergman kernel on a bounded symmetric domain, see for instance~\cite{FK1990} or~\cite[Chapter II, Section 5]{S1980}. 

\begin{theorem}\label{thm:bergman_BSD} Suppose $\Omega \subset \Cb^d$ is a bounded symmetric domain in standard form with real rank $r$. Assume $\Phi:(\Db^r, K_{\Db^r}) \rightarrow (\Omega, K_\Omega)$ is a holomorphic isometric embedding with $\Phi(0)=0$. Then there exist constants $p,C > 0$ such that 
\begin{align*}
\kappa_\Omega( \Phi(z), \Phi(z) ) = C \left( \prod_{j=1}^r \left(1-\abs{z_j}^2 \right) \right)^{-p}
\end{align*}
for all $z \in \Db^r$. Moreover, $p \geq (d+r)/r$. 
\end{theorem}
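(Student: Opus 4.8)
The plan is to exploit the Jordan-algebraic description of $\Omega$ together with the known closed form of its Bergman kernel. In standard form, $\Omega$ is the unit ball, for the spectral norm, of a Hermitian Jordan triple system structure on $\Cb^d$; attached to this structure are the rank $r$, two multiplicities $a,b \geq 0$ satisfying $d = r + \tfrac12 a\, r(r-1) + br$, the genus $g = (r-1)a + b + 2$, and the generic norm (Jordan determinant) $h:\Cb^d \times \Cb^d \to \Cb$, a polynomial normalized by $h(0,0)=1$. By the formula for the Bergman kernel of a bounded symmetric domain in \cite{FK1990} (see also \cite[Chapter II, Section 5]{S1980}),
\[
\kappa_\Omega(z,w) = \frac{1}{\Vol(\Omega)}\, h(z,w)^{-g}
\]
for all $z,w \in \Omega$. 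This reduces the theorem to understanding $h$ along the image of $\Phi$.

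First I would treat a model embedding. Fix a Jordan frame $c_1,\dots,c_r$ of orthogonal minimal tripotents and set $\Phi_0(z) = \sum_{j=1}^r z_j c_j$. Then $\Phi_0$ is a holomorphic isometric embedding of $(\Db^r, K_{\Db^r})$ with $\Phi_0(0)=0$, and the defining property of a frame is precisely that the generic norm factors, $h(\Phi_0(z),\Phi_0(w)) = \prod_{j=1}^r (1 - z_j\overline{w_j})$. Substituting this into the kernel formula gives
\[
\kappa_\Omega(\Phi_0(z),\Phi_0(z)) = \frac{1}{\Vol(\Omega)} \prod_{j=1}^r \left(1-\abs{z_j}^2\right)^{-g},
\]
which is the desired identity with $p = g$ and $C = 1/\Vol(\Omega)$.

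The main step is to pass from the model $\Phi_0$ to an arbitrary holomorphic isometric embedding $\Phi$ with $\Phi(0)=0$. Here I would invoke the transitivity of the isotropy group $K := \{\psi \in \Aut(\Omega) : \psi(0)=0\}$ on maximal polydisks through the origin: since $\Phi$ is isometric of maximal rank $r$, its image $\Phi(\Db^r)$ is such a maximal polydisk, so there is $k \in K$ with $k(\Phi(\Db^r)) = \Phi_0(\Db^r)$ (this transitivity on frames is part of Wolf's structure theory, \cite{W1972}). Then $A := \Phi_0^{-1} \circ k \circ \Phi$ is a self-isometry of $(\Db^r, K_{\Db^r})$ fixing $0$ and biholomorphic, hence an automorphism of $\Db^r$ of the form $(Az)_j = \zeta_j z_{\sigma(j)}$ with $\abs{\zeta_j}=1$ and $\sigma$ a permutation; in particular $\prod_j(1-\abs{(Az)_j}^2) = \prod_j(1-\abs{z_j}^2)$. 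Writing $\Phi = k^{-1}\circ \Phi_0 \circ A$ and using that in standard form $K$ acts by linear maps (so $\det k'$ is a nonzero constant) together with the change of variable formula (Proposition~\ref{prop:change_of_variable}) and the $\Aut(\Omega)$-invariance of the kernel on the diagonal, I obtain
\[
\kappa_\Omega(\Phi(z),\Phi(z)) = \abs{\det k'}^{-2}\,\kappa_\Omega\big(\Phi_0(Az),\Phi_0(Az)\big) = C\prod_{j=1}^r\left(1-\abs{z_j}^2\right)^{-g},
\]
the claimed formula with $p=g$. I expect this rigidity step to be the main obstacle, since it requires both that every maximal isometric polydisk through $0$ is a frame polydisk and that $K$ permutes these transitively; the cleanest route is to cite the classification of maximal flats, equivalently frames, in \cite{W1972}.

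Finally, the inequality $p \geq (d+r)/r$ is a direct computation with the multiplicities. From $d = r + \tfrac12 a\, r(r-1) + br$ one gets $(d+r)/r = 2 + \tfrac12 a(r-1) + b$, whence
\[
p - \frac{d+r}{r} = g - \frac{d+r}{r} = (r-1)a + b + 2 - \left(2 + \tfrac12 a(r-1) + b\right) = \tfrac12\, a(r-1) \geq 0,
\]
with equality exactly when $r=1$ or $a=0$. This completes the plan.
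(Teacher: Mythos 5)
Your proposal is computationally sound and is in fact far more self-contained than the paper's treatment: the paper does not prove Theorem~\ref{thm:bergman_BSD} at all, but simply cites pages 76--77 and Equations (1.9) and (3.3) of [FK1990], whereas you reconstruct the statement from the Jordan-triple data. The kernel formula $\kappa_\Omega = \Vol(\Omega)^{-1}h^{-g}$, the factorization $h(\Phi_0(z),\Phi_0(w)) = \prod_j (1-z_j\overline{w_j})$ along a frame, and the arithmetic $d = r + \tfrac12 a r(r-1) + br$, $g = a(r-1)+b+2$, hence $g - (d+r)/r = \tfrac12 a(r-1) \geq 0$, are all correct and are precisely the content of the equations the paper points to.

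The genuine gap is the rigidity step, which you rightly flag as the main obstacle but then dispose of too quickly. Wolf's structure theory gives transitivity of $K$ on Jordan frames, hence on \emph{frame} polydisks through the origin; it does not give the prior claim that the image of an arbitrary holomorphic embedding $\Phi:(\Db^r,K_{\Db^r})\to(\Omega,K_\Omega)$ that is isometric for the \emph{Kobayashi} distance with $\Phi(0)=0$ is a frame polydisk. The "maximal flats" classified in [W1972] are totally geodesic for the invariant Riemannian metric, which is a different hypothesis, and Kobayashi-isometric images are in general very flexible --- the paper itself exploits the non-uniqueness of complex geodesics through $0$ in $\Db^r$ in Lemma~\ref{lem:uniq_polydisk_1}. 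So you need a separate argument that such a $\Phi$ is standard; one route is to note that $d\Phi_0$ must be a linear isometry from $(\Cb^r,\norm{\cdot}_\infty)$ into $(\Cb^d,\norm{\cdot}_{HC})$ and then classify these, another is to invoke Mok-type rigidity for holomorphic isometries. The step genuinely matters: if one only knew $\Phi = \psi\circ\Phi_0\circ A$ with $\psi\in\Aut(\Omega)$ not fixing $0$, the factor $\abs{\det\psi'}^2$ would be non-constant along the polydisk and the conclusion would fail in the stated form, so the reduction to $\psi\in K$ (linear) and $A$ a rotation-permutation is exactly where the content lies. In fairness, the paper's citation-only proof carries the same implicit reliance, and in the paper's actual application (Section 8) the embedding is produced by the Polydisk Theorem, so it is a frame polydisk by construction and the issue does not arise there.
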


Here are precise references for the proof of Theorem~\ref{thm:bergman_BSD}: by the discussion on pages 76 and 77 in~\cite{FK1990} there exist constants $p,C > 0$ such that 
\begin{align*}
\kappa_\Omega( \Phi(z), \Phi(z) ) = C \left( \prod_{j=1}^r \left(1-\abs{z_j}^2 \right) \right)^{-p}
\end{align*}
for all $z \in \Db^r$. The lower bound on $p$ follows from Equations (1.9) and (3.3) in~\cite{FK1990}.

\subsection{Complex geodesics in polydisks}

We will use the following observations about complex geodesics in polydisks. 

\begin{lemma}\label{lem:uniq_polydisk_1} Suppose $z=(z_1,\dots,z_r) \in \Db^r$ and 
\begin{align*}
 \abs{z_a} \neq \abs{z_b}
\end{align*}
for some $1 \leq a,b \leq r$. Then there exists two complex geodesics $\varphi_1, \varphi_2 : \Db \rightarrow \Db^r$ whose images contain $z$ and $0$, but $\varphi_1(\Db) \neq \varphi_2(\Db)$. 
\end{lemma}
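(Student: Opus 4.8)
The plan is to use the explicit description of the Kobayashi distance on the polydisk, namely $K_{\Db^r}(z,w) = \max_{1\le j \le r} K_{\Db}(z_j,w_j)$, together with the observation that a holomorphic map $\varphi = (\varphi_1,\dots,\varphi_r) : \Db \to \Db^r$ is a complex geodesic (in the sense of Definition~\ref{defn:cplx_geod}) as soon as at least one coordinate $\varphi_j$ is an automorphism of $\Db$. Indeed, each coordinate is distance-decreasing by the Schwarz--Pick lemma, so $K_{\Db^r}(\varphi(u),\varphi(v)) \le K_\Db(u,v)$, while the automorphic coordinate forces equality. I will build two complex geodesics through $0$ and $z$ that agree in an automorphic coordinate but differ in a coordinate where there is some ``slack''.

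Set $\rho = \max_j \abs{z_j}$; since $\abs{z_a}\neq\abs{z_b}$ for some pair we have $\rho > 0$ and, crucially, there is an index $a$ with $\abs{z_a}=\rho$ and an index $b \neq a$ with $\abs{z_b} < \rho$ (if every coordinate had modulus $\rho$ all moduli would agree). Note $\rho < 1$, so $\rho \in \Db$. The first geodesic is the diagonal map $\varphi_1(u) = \tfrac{u}{\rho}\,z$, whose $j$-th coordinate is $u \mapsto \tfrac{z_j}{\rho}u$; it satisfies $\varphi_1(0)=0$, $\varphi_1(\rho)=z$, and since coordinate $a$ is the rotation $u \mapsto \tfrac{z_a}{\rho}u \in \Aut(\Db)$, it is a complex geodesic. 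For the second geodesic I keep every coordinate $j \neq b$ as above, but in coordinate $b$ I replace the linear map by a nonlinear $\psi : \Db \to \Db$ with $\psi(0)=0$ and $\psi(\rho)=z_b$. Such a $\psi$ exists because $\abs{z_b} < \rho$: taking $\psi(u) = (\tfrac{z_b}{\rho} - c\rho)u + c u^2$ and choosing $c \neq 0$ small enough, the interpolation conditions hold and, since the linear map $u \mapsto \tfrac{z_b}{\rho}u$ sends $\overline{\Db}$ into the compactly contained disk $\tfrac{\abs{z_b}}{\rho}\overline{\Db} \subset \Db$, a sufficiently small perturbation still maps $\Db$ into $\Db$. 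The resulting map $\varphi_2$ again has coordinate $a$ equal to a rotation, so it is a complex geodesic with $\varphi_2(0)=0$ and $\varphi_2(\rho)=z$.

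It remains to check $\varphi_1(\Db) \neq \varphi_2(\Db)$, and here I will argue with the images directly rather than with the parametrizations. Projecting to the $(a,b)$-coordinates, the condition $\abs{z_a}=\rho$ makes $u \mapsto \tfrac{z_a}{\rho}u$ a bijection of $\Db$, so both projected images are graphs over the same disk: $\varphi_1(\Db)$ projects to the graph of the linear function $w_a \mapsto \tfrac{z_b}{z_a}w_a$, while $\varphi_2(\Db)$ projects to the graph of $w_a \mapsto \psi(\tfrac{\rho}{z_a}w_a)$. These graphs coincide precisely when $\psi$ is the linear map $u \mapsto \tfrac{z_b}{\rho}u$, which we have arranged to fail. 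Hence the projections differ, and therefore $\varphi_1(\Db) \neq \varphi_2(\Db)$.

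The only genuinely delicate point is the last step: distinct parametrizations could a priori share the same image, so one must separate the images and not merely the maps. The $(a,b)$-projection converts both images into graphs over a common disk and reduces the question to $\psi$ being nonlinear, which I expect to be the cleanest route. Verifying that the perturbed $\psi$ still maps $\Db$ into $\Db$ is routine once one notes that the unperturbed contraction has image relatively compact in $\Db$.
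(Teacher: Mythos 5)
Your proof is correct and follows essentially the same strategy as the paper: anchor one coordinate as an automorphism of $\Db$ (the paper uses the identity in the last slot after ordering the moduli, you use a rotation in a coordinate of maximal modulus) to force the geodesic property, and exploit the slack in a coordinate of smaller modulus to produce two distinct interpolating functions. Your treatment of the final step --- distinguishing the \emph{images} rather than the parametrizations via the graph structure over the automorphic coordinate --- is actually more careful than the paper's, which asserts $\varphi_1(\Db)\neq\varphi_2(\Db)$ directly from $f_1\neq f_2$ and leaves that reduction implicit.
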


\begin{proof} By permuting the coordinates we can assume that 
\begin{align*}
0 \leq \abs{z_1} \leq \abs{z_2} \leq \dots \leq \abs{z_r}.
\end{align*}
Since $\abs{z_1} < \abs{z_r}$ there exists two holomorphic functions $f_1, f_2 : \Db \rightarrow \Db$ such that $f_1(0)=f_2(0)=0$, $f_1(z_r) = f_2(z_r) = z_1$, and $f_1 \neq f_2$. For $2 \leq j \leq k-1$, select $\omega_j \in \overline{\Db}$ such that $\omega_j z_r = z_j$. Then for $j=1,2$, define the map
\begin{align*}
\varphi_j& : \Db \rightarrow \Db^r \\
\varphi_j&(\lambda) = (f_j(\lambda), \omega_2 \lambda, \dots, \omega_{r-1} \lambda, \lambda).
\end{align*}
Since each $\varphi_j$ is holomorphic, we have 
\begin{align*}
K_{\Db^r}(\varphi_j(\lambda_1), \varphi_j(\lambda_2)) \leq K_{\Db}(\lambda_1,\lambda_2)
\end{align*}
for all $\lambda_1,\lambda_2 \in \Db$. Further, by projecting onto the last component we have 
\begin{align*}
K_{\Db^r}(\varphi_j(\lambda_1), \varphi_j(\lambda_2)) \geq K_{\Db}(\lambda_1,\lambda_2)
\end{align*}
for all $\lambda_1, \lambda_2 \in \Db$. So $\varphi_1, \varphi_2 : \Db \rightarrow \Db^r$ are both complex geodesics. Finally, since $f_1 \neq f_2$, we have $\varphi_1(\Db) \neq \varphi_2(\Db)$. 
\end{proof}

\begin{lemma}\label{lem:uniq_polydisk_2} Suppose $z=(z_1,\dots,z_r) \in \Db^r$ and 
\begin{align*}
 0 < \abs{z_1} = \abs{z_2} = \dots = \abs{z_r}.
\end{align*}
If $\varphi : \Db \rightarrow \Db^r$ is a complex geodesic with $\varphi(0)=0$ and $\varphi(\lambda_0)=z$, then $\abs{\lambda_0} = \abs{z_1}$ and
\begin{align*}
\varphi(\lambda) = \left( \frac{z_1}{\lambda_0} \lambda, \dots, \frac{z_r}{\lambda_0} \lambda \right)
\end{align*}
for all $\lambda \in \Db$. 
\end{lemma}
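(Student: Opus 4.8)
The plan is to reduce everything to the one–variable Schwarz lemma by exploiting the product structure of the Kobayashi distance on the polydisk. Recall that
\[
K_{\Db^r}(u,v) = \max_{1 \le j \le r} K_{\Db}(u_j, v_j)
\]
for all $u,v \in \Db^r$, and that $K_{\Db}(0,\cdot)$ is a strictly increasing function of the modulus. Writing $\varphi = (\varphi_1, \dots, \varphi_r)$ with each $\varphi_j : \Db \to \Db$ holomorphic and $\varphi_j(0)=0$, I would first pin down $\abs{\lambda_0}$: since $\varphi$ is a complex geodesic,
\[
K_{\Db}(0,\lambda_0) = K_{\Db^r}(\varphi(0),\varphi(\lambda_0)) = K_{\Db^r}(0,z) = \max_{1\le j \le r} K_{\Db}(0,z_j) = K_{\Db}(0,z_1),
\]
where the last equality uses the hypothesis $\abs{z_1} = \dots = \abs{z_r}$. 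Hence $\abs{\lambda_0} = \abs{z_1}$.

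Next I would show that each coordinate function $\varphi_j$ is forced to be a rotation. Because the coordinate projection $\Db^r \to \Db$ is distance decreasing, $\varphi_j(0)=0$, and $\varphi_j(\lambda_0)=z_j$, we have
\[
K_{\Db}\big(0, z_j\big) = K_{\Db}\big(\varphi_j(0), \varphi_j(\lambda_0)\big) \le K_{\Db}(0,\lambda_0) = K_{\Db}(0,z_1) = K_{\Db}(0,z_j),
\]
so equality holds for \emph{every} $j$. Equivalently $\abs{\varphi_j(\lambda_0)} = \abs{z_j} = \abs{\lambda_0}$ with $\lambda_0 \neq 0$, so the equality case of the Schwarz lemma applies to $\varphi_j$ and yields a unimodular constant $\omega_j$ with $\varphi_j(\lambda) = \omega_j \lambda$ for all $\lambda \in \Db$.

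Finally, evaluating at $\lambda_0$ gives $z_j = \varphi_j(\lambda_0) = \omega_j \lambda_0$, so $\omega_j = z_j/\lambda_0$ and
\[
\varphi(\lambda) = \left( \frac{z_1}{\lambda_0}\lambda, \dots, \frac{z_r}{\lambda_0}\lambda \right)
\]
for all $\lambda \in \Db$, as claimed. I do not expect a serious obstacle here: the only point that needs care is the passage from the single maximum identity defining a complex geodesic to the individual equalities $K_{\Db}(\varphi_j(0),\varphi_j(\lambda_0)) = K_{\Db}(0,\lambda_0)$ for each $j$. The equal–modulus hypothesis is precisely what makes this step automatic, forcing all coordinates (not merely the maximizing one) to saturate the Schwarz–Pick inequality; by contrast, in the unequal–modulus setting of Lemma~\ref{lem:uniq_polydisk_1} this rigidity fails and non-unique geodesics appear.
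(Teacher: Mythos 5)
Your proposal is correct and follows essentially the same route as the paper: identify $\abs{\lambda_0}=\abs{z_1}$ from the geodesic identity and the max-formula for $K_{\Db^r}$, then apply the equality case of the Schwarz lemma to each coordinate function $\varphi_j$. You simply spell out in more detail the step the paper compresses into ``applying the Schwarz lemma to each component function,'' namely that the equal-modulus hypothesis forces every coordinate, not just the maximizing one, to saturate the Schwarz--Pick inequality.
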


\begin{proof} Since 
\begin{align*}
K_{\Db}(0,\lambda_0) = K_{\Db^r}(0,z) = \max_{1 \leq j \leq r} K_{\Db}(0,z_j) = K_{\Db}(0,z_1)
\end{align*}
we must have $\abs{\lambda_0} = \abs{z_1}$. Then applying the Schwarz lemma to each component function of $\varphi$ shows that 
\begin{align*}
\varphi(\lambda) = \left( \frac{z_1}{\lambda_0} \lambda, \dots, \frac{z_r}{\lambda_0} \lambda \right)
\end{align*}
for all $\lambda \in \Db$. 
\end{proof}

\section{Bounded symmetric domains with smooth boundaries}

\begin{theorem}\label{thm:BSD_reg} Suppose $\Omega$ is a bounded symmetric domain with $\Cc^{1,1}$ boundary. Then $\Omega$ is biholomorphic to the unit ball. \end{theorem}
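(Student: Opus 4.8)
The plan is to combine the geometric results about the rescaled domain with the structure theory of bounded symmetric domains, reducing everything to the case analysis on the real rank.

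\medskip

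First I would reduce to standard form: by the Harish-Chandra embedding we may assume $\Omega$ coincides with the image of its Harish-Chandra embedding, so $\Omega$ is convex and bounded with $\Omega = \{z : \norm{z}_{HC} < 1\}$. Since $\Omega$ covers a compact manifold, Proposition~\ref{prop:rescaling} applies: $\Omega$ is biholomorphic to a domain $D$ with $\Pc_\alpha \subset D \subset \Pc_\beta$ and with $\Aut(D)$ containing the vertical translation one-parameter group $u_t$. Fix the biholomorphism $F : D \rightarrow \Omega$ (reversing the roles so that $\Omega$ is in standard form). The key object is the slice $\Hc_D = D \cap \Cb\cdot e_1$, which by Observation~\ref{obs:dist_equality} parametrizes a complex geodesic in $D$, and hence its image $F(\Hc_D)$ is a complex geodesic in $\Omega$. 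Let $r$ denote the real rank of $\Omega$; I want to show $r = 1$, which by the discussion following the real-rank definition forces $\Omega$ to be biholomorphic to the ball.

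\medskip

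The heart of the argument is to show that the complex geodesic $F(\Hc_D)$ is the diagonal of a maximal polydisk. I would proceed by contradiction, assuming $r \geq 2$. Normalize so the geodesic passes through $0$; by the Polydisk Theorem~\ref{thm:polydisk} there is a holomorphic isometric embedding $\Phi : (\Db^r, K_{\Db^r}) \rightarrow (\Omega, K_\Omega)$ with $\Phi(0) = 0$ whose image contains two chosen points of $F(\Hc_D)$. Using Proposition~\ref{prop:unique_slice} — which asserts that the complex geodesic through two points of $\Hc_D$ is unique up to reparametrization — together with Lemmas~\ref{lem:uniq_polydisk_1} and~\ref{lem:uniq_polydisk_2}, I would argue that this forces the geodesic to correspond to a point $z = (z_1, \dots, z_r) \in \Db^r$ with all coordinates of equal modulus (otherwise Lemma~\ref{lem:uniq_polydisk_1} produces two distinct complex geodesics through the same pair, contradicting uniqueness from Proposition~\ref{prop:unique_slice}). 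By Lemma~\ref{lem:uniq_polydisk_2} the geodesic is then the image under $\Phi$ of the "balanced diagonal" $\lambda \mapsto (\omega_1\lambda, \dots, \omega_r\lambda)$. This identifies $F(\Hc_D)$ as the diagonal of a maximal polydisk. Now I would feed this into the Bergman kernel machinery of Step~3: applying Theorem~\ref{thm:bergman_BSD} along this balanced diagonal gives $\kappa_\Omega$ behaving like $(1-\abs{\lambda}^2)^{-rp}$ with $p \geq (d+r)/r$, while the change-of-variable formula (Proposition~\ref{prop:change_of_variable}) relates $\kappa_\Omega$ along $F(\Hc_D)$ to $\kappa_D$ along $\Hc_D$, and Observation~\ref{obs:bergman_kernel_parabola} together with the sandwiching $\Pc_\alpha \subset D \subset \Pc_\beta$ and Proposition~\ref{prop:bergman_monotone} pins down the growth of $\kappa_D$ as $\Real(\lambda)^{-(d+1)}$. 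The function $J(\lambda) = \det F'\!\left(\frac{1+\lambda}{1-\lambda},0,\dots,0\right)$ then satisfies an estimate forcing $J$ to extend continuously to $\partial\Db$ with $J|_{\partial\Db} \equiv 0$ unless $r=1$; the maximum principle yields $J \equiv 0$, contradicting that $F$ is a biholomorphism.

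\medskip

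The main obstacle I expect is the last step: rigorously showing that the boundary behavior of the two Bergman kernels is incompatible unless $r = 1$. Comparing the product form $\prod_j(1-\abs{z_j}^2)^{-p}$ on the polydisk side against the single-factor rate $\Real(\lambda)^{-(d+1)}$ on the $\Pc_\alpha/\Pc_\beta$ side requires carefully tracking how the Cayley-type parametrization $\lambda \mapsto \frac{1+\lambda}{1-\lambda}$ converts radial approach $\abs{\lambda} \to 1$ in the disk into approach to $\partial\Db$ for $J$, and then extracting the exponent mismatch — the exponent $rp$ (with $p \geq (d+r)/r$, so $rp \geq d+r > d+1$ when $r \geq 2$) is strictly larger than $d+1$, and it is precisely this strict inequality that drives $J|_{\partial\Db}\equiv 0$. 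Making the continuous extension of $J$ to $\partial\Db$ and the vanishing rigorous — rather than merely asymptotic — is the delicate analytic point, and I would expect to need the uniformity of the kernel estimates along the full slice together with the sandwich inclusions to control $J$ up to the boundary.
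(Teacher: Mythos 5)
Your proposal is correct and follows essentially the same route as the paper: reduce to the Harish--Chandra realization, use Proposition~\ref{prop:rescaling} to sandwich a biholomorphic copy $D$ between $\Pc_\alpha$ and $\Pc_\beta$, identify $\Hc_D$ with the diagonal of a maximal polydisk via Proposition~\ref{prop:unique_slice} and Lemmas~\ref{lem:uniq_polydisk_1}--\ref{lem:uniq_polydisk_2}, and then compare Bergman kernel growth rates ($rp \geq d+r > d+1$ for $r \geq 2$) to force $J|_{\partial \Db} \equiv 0$ and conclude by the maximum principle. The exponent bookkeeping you flag as the delicate point is exactly what the paper carries out, arriving at $\abs{J(\lambda)}^2 \leq C \abs{1-\lambda}^{2(d+1)}(1+\abs{\lambda})^{d+r}(1-\abs{\lambda})^{r-1}$, which vanishes uniformly as $\abs{\lambda} \to 1$ precisely when $r > 1$.
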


The rest of the section is devoted to the proof of the Theorem. So suppose $\Omega$ is a bounded symmetric domain with $\Cc^{1,1}$ boundary.

Let $\Omega_{HC}$ denote the image of the Harish-Chandra embedding of $\Omega$. By Proposition~\ref{prop:rescaling}, there exists a biholomorphism $F:D \rightarrow \Omega_{HC} $ where $D \subset \Cb^d$ is a domain such that
\begin{align*}
\Pc_\alpha \subset D \subset \Pc_{\beta}
\end{align*}
for some $\alpha > \beta > 0$. By post-composing $F$ with an element of $\Aut(\Omega_{HC})$ we may assume that $F(e_1) = 0$.

\begin{lemma} There exists a holomorphic isometric embedding $\Phi:(\Db^r, K_{\Db^r}) \rightarrow (D, K_D)$ such that 
\begin{align*}
\Phi(\lambda,\dots,\lambda) = \left( \frac{1+\lambda}{1-\lambda},0,\dots,0 \right)
\end{align*}
for all $\lambda \in \Db$. 
\end{lemma}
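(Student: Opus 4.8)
The goal is to produce a holomorphic isometric embedding of the polydisk $\Db^r$ into $D$ whose diagonal parametrizes the slice $\Hc_D$. The natural strategy is to transport the Polydisk Theorem from $\Omega_{HC}$ back to $D$ via the biholomorphism $F$, while arranging the parametrization so that the diagonal maps onto $\Hc_D$.

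First I would invoke the Polydisk Theorem~\ref{thm:polydisk} on $\Omega_{HC}$, which is a bounded symmetric domain of some real rank $r$: applied to the two points $0 = F(e_1)$ and $F$ of some point deep in $\Hc_D$ (say the image under $F$ of a point on the positive real axis of $\Hc_D$), it yields a holomorphic isometric embedding $\Psi:(\Db^r,K_{\Db^r}) \to (\Omega_{HC},K_{\Omega_{HC}})$ whose image contains both $0$ and that second point. After composing with an element of $\Aut(\Db^r)$ I can assume $\Psi(0) = 0$. Then $\Phi_0 := F^{-1}\circ \Psi$ is a holomorphic isometric embedding of $\Db^r$ into $D$ with $\Phi_0(0) = e_1$. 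The plan is then to reparametrize $\Phi_0$ by an automorphism of $\Db^r$ so that the diagonal $\lambda \mapsto (\lambda,\dots,\lambda)$ is sent to $\Hc_D$ and conjugated by the Cayley map $\lambda \mapsto \tfrac{1+\lambda}{1-\lambda}$.

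The crux is to show that the diagonal of this polydisk embedding maps precisely onto the slice $\Hc_D$, and here is where I would exploit the uniqueness results proved earlier. The point $q := \Phi_0^{-1}(\text{second base point}) \in \Db^r$ and $0$ are both joined, inside $\Db^r$, by the diagonal complex geodesic and possibly by others; but inside $D$ the image $\Phi_0(\text{diagonal})$ is a complex geodesic joining $e_1$ and a point of $\Hc_D$. By Proposition~\ref{prop:unique_slice}, the unique complex geodesic in $D$ through two points of $\Hc_D$ is (up to reparametrization) $\Hc_D$ itself. So the key is to verify that the second base point lies in $\Hc_D$ and that the relevant coordinate of $q$ has all moduli equal, so that Lemma~\ref{lem:uniq_polydisk_2} pins down the diagonal as the unique complex geodesic through $0$ and $q$ in $\Db^r$. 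Composing the two uniqueness statements forces $\Phi_0(\text{diagonal}) = \Hc_D$. Concretely, I would choose $q = (s,\dots,s)$ on the diagonal (with $s \in (0,1)$ real), so that Lemma~\ref{lem:uniq_polydisk_2} applies and the isometric image of the diagonal geodesic is a complex geodesic in $D$ joining $e_1 \in \Hc_D$ to $\Phi_0(q) \in \Hc_D$; Proposition~\ref{prop:unique_slice} then identifies its image with $\Hc_D$.

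I expect the main obstacle to be the bookkeeping that guarantees the second base point genuinely lands in $\Hc_D$ and that the diagonal (rather than some twisted geodesic) is the one whose image equals $\Hc_D$. Once the image of the diagonal is known to be $\Hc_D$, the restriction $\lambda \mapsto \Phi_0(\lambda,\dots,\lambda)$ is a biholomorphism from $\Db$ onto $\Hc_D$ sending $0 \mapsto e_1$, and I would post-compose $\Phi_0$ with a diagonal automorphism $\phi^{\times r}$, $\phi \in \Aut(\Db)$, to arrange that the diagonal restriction becomes exactly $\lambda \mapsto \bigl(\tfrac{1+\lambda}{1-\lambda},0,\dots,0\bigr)$; this is legitimate because precomposing an isometric polydisk embedding with a product of disk automorphisms yields another isometric polydisk embedding. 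Setting $\Phi := \Phi_0 \circ (\phi^{\times r})$ gives the desired embedding, completing the lemma.
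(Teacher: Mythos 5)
Your overall strategy matches the paper's: apply the Polydisk Theorem to $e_1$ and a second point $w_0 \in \Hc_D$, use the rigidity of complex geodesics through two points of $\Hc_D$ (Proposition~\ref{prop:unique_slice}) to identify the image of the diagonal with $\Hc_D$, and finish by reparametrizing with a diagonal automorphism of $\Db^r$. But there is a genuine gap at exactly the step you flag as ``the main obstacle'' and then dispose of by fiat: you write that you ``would choose $q=(s,\dots,s)$ on the diagonal.'' You cannot choose this. The Polydisk Theorem hands you an isometric embedding $\Phi_0$ whose image merely contains $e_1$ and $w_0$; after normalizing $\Phi_0(0)=e_1$, the preimage $q=\Phi_0^{-1}(w_0)$ is whatever it is, and the only remaining freedom is the stabilizer of $0$ in $\Aut(\Db^r)$, i.e.\ coordinate rotations composed with permutations. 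These preserve the multiset of moduli $\left\{\abs{q_1},\dots,\abs{q_r}\right\}$, so $q$ can be moved onto the diagonal only if all the $\abs{q_j}$ are already equal --- which is precisely what has to be proved. (The alternative reading, that you first pick $q$ on the diagonal and declare the second base point to be $\Phi_0(q)$, fails for the opposite reason: there is then no guarantee that $\Phi_0(q)\in\Hc_D$, and Proposition~\ref{prop:unique_slice} needs two points of $\Hc_D$ on the geodesic before it says anything.)

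The paper closes exactly this gap with Lemma~\ref{lem:uniq_polydisk_1}, which you never invoke: if the normalized preimage $(t_1,\dots,t_r)$ of $w_0$ had coordinates of unequal modulus, that lemma would produce two complex geodesics in $\Db^r$ through $0$ and that point with distinct images; pushing forward by the injective isometric embedding $\Phi_0$ would give two complex geodesics in $D$ through $e_1$ and $w_0$ with distinct images, contradicting the ``in particular'' part of Proposition~\ref{prop:unique_slice}, which forces every such geodesic to have image $\Hc_D$. Hence $t_1=\dots=t_r$, and only then does Lemma~\ref{lem:uniq_polydisk_2} apply to identify the relevant geodesic with the diagonal. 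The rest of your argument (transporting through $F$, the final reparametrization by $\phi^{\times r}$) is fine, but without the Lemma~\ref{lem:uniq_polydisk_1} step the proof does not go through.
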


\begin{proof}As before, define
\begin{align*}
\Hc_D := D \cap \Cb \cdot e_1 =  \left\{ (z,0,\dots, 0) \in \Cb^d : { \rm Re}(z) > 0\right\}.
\end{align*}

Fix $w_0 \in \Hc_D \setminus\{e_1\}$. By Theorem~\ref{thm:polydisk} there exists a holomorphic isometric embedding $\Phi_0:(\Db^r, K_{\Db^r}) \rightarrow (D, K_D)$ with $e_1, w_0 \in \Phi_0(\Db^r)$. By pre-composing $\Phi_0$ with an element of $\Aut(\Db^r) \geq \Aut(\Db) \times \cdots \times \Aut(\Db)$ we may assume that 
\begin{align*}
\Phi_0(0) = 0 \text{ and } \Phi_0(t_1,\dots,t_r) = w_0
\end{align*}
for some real numbers $t_1,\dots, t_r \in [0,1)$. 

By the ``in particular'' part of Proposition~\ref{prop:unique_slice}, every complex geodesic in $D$ containing $e_1,w_0$ has image $\Hc_D$. So by Lemma~\ref{lem:uniq_polydisk_1}  we must have
\begin{align*}
t_1 = \dots = t_r.
\end{align*}
Then by Lemma~\ref{lem:uniq_polydisk_2} 
\begin{align*}
\Hc_D = \{ \Phi_0(\lambda, \dots,\lambda) : \lambda \in \Db\}.
\end{align*}
Then by the first part of Proposition~\ref{prop:unique_slice}, there exists $\phi \in \Aut(\Db)$ such that 
\begin{align*}
\Phi_0(\phi(\lambda),\dots,\phi(\lambda)) = \left( \frac{1+\lambda}{1-\lambda},0,\dots,0 \right)
\end{align*}
for all $\lambda \in \Db$. Finally, the map $\Phi:=\Phi_0 \circ (\phi,\dots,\phi)$ has all of the desired properties. 
\end{proof} 

Next consider the function
\begin{align*}
J&:\Db \rightarrow \Cb \\
J&(\lambda) = \det\left(F^\prime\left( \frac{1+\lambda}{1-\lambda},0,\dots,0 \right)\right)=\det(F^\prime(\Phi(\lambda,\dots,\lambda)))
\end{align*}
where $F^\prime(z)$ is the complex Jacobian matrix of $F$. Since $F$ is a biholomorphism, $J$ is nowhere zero. We will show that $\Omega_{HC}$ is biholomorphic to the ball by estimating the boundary values of $J$. To that end, define $\Phi_{HC} := F \circ \Phi$. Notice that $\Phi_{HC}(0)= F(e_1)=0$. 

Let $\kappa_D$ and $\kappa_{\Omega_{HC}}$ be the Bergman kernels of $D$ and $\Omega_{HC}$ respectively. We will use the notation that 
\begin{align*}
\kappa_D(z) : = \kappa_D(z,z) \text{ and } \kappa_{\Omega_{HC}}(w) := \kappa_{\Omega_{HC}}(w,w)
\end{align*}
for $z \in D$ and $w \in \Omega_{HC}$.  Then, by Proposition~\ref{prop:change_of_variable},
\begin{align}
\label{eq:J_change_of_variable}
\abs{J(\lambda)}^2 =\frac{\kappa_D \left(  \Phi(\lambda,\dots,\lambda) \right)}{\kappa_{\Omega_{HC}} \left( \Phi_{HC}(\lambda,\dots,\lambda) \right)}= \frac{\kappa_D \left(  \frac{1+\lambda}{1-\lambda},0,\dots,0 \right)}{\kappa_{\Omega_{HC}} \left( \Phi_{HC}(\lambda,\dots,\lambda) \right)}
\end{align} 
for all $\lambda \in \Db$.

\begin{lemma}\label{lem:D_bergman_estimate} There exist constants $0 < a < b$ such that: 
\begin{align*}
a\left(\frac{1-\abs{\lambda}}{\abs{1-\lambda}^2} \right)^{-(d+1)} \leq \kappa_D \left(  \frac{1+\lambda}{1-\lambda},0,\dots,0 \right) \leq b \left(\frac{1-\abs{\lambda}}{\abs{1-\lambda}^2} \right)^{-(d+1)}
\end{align*}
for all $\lambda \in \Db$. 
\end{lemma}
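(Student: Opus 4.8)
The plan is to bound $\kappa_D$ from both sides by the two model parabola kernels $\kappa_{\Pc_\alpha}$ and $\kappa_{\Pc_\beta}$, for which an exact formula is available, and then to convert the resulting power of ${\rm Re}\left(\frac{1+\lambda}{1-\lambda}\right)$ into the advertised quantity $\frac{1-\abs{\lambda}}{\abs{1-\lambda}^2}$.

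First I would observe that the point $p_\lambda := \left(\frac{1+\lambda}{1-\lambda},0,\dots,0\right)$ has ${\rm Re}\left(\frac{1+\lambda}{1-\lambda}\right) > 0$ and vanishing coordinates $2,\dots,d$, so it lies in $\Hc_D \subset \Pc_\alpha \subset D \subset \Pc_\beta$. Applying the monotonicity of the Bergman kernel (Proposition~\ref{prop:bergman_monotone}) to the inclusions $\Pc_\alpha \subset D$ and $D \subset \Pc_\beta$ gives
\begin{align*}
\kappa_{\Pc_\beta}(p_\lambda,p_\lambda) \leq \kappa_D(p_\lambda,p_\lambda) \leq \kappa_{\Pc_\alpha}(p_\lambda,p_\lambda).
\end{align*}
By Observation~\ref{obs:bergman_kernel_parabola} there are constants $C_\alpha, C_\beta > 0$ with $\kappa_{\Pc_\gamma}(p_\lambda,p_\lambda) = C_\gamma\, {\rm Re}\left(\frac{1+\lambda}{1-\lambda}\right)^{-(d+1)}$ for $\gamma \in \{\alpha,\beta\}$, so the entire estimate reduces to understanding the single scalar ${\rm Re}\left(\frac{1+\lambda}{1-\lambda}\right)$.

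The one genuine computation is the Cayley-transform identity
\begin{align*}
{\rm Re}\left(\frac{1+\lambda}{1-\lambda}\right) = \frac{1-\abs{\lambda}^2}{\abs{1-\lambda}^2},
\end{align*}
obtained by multiplying numerator and denominator by $\overline{1-\lambda}$. Factoring $1-\abs{\lambda}^2 = (1-\abs{\lambda})(1+\abs{\lambda})$ and using $1 \leq 1+\abs{\lambda} \leq 2$ for $\lambda \in \Db$ shows that ${\rm Re}\left(\frac{1+\lambda}{1-\lambda}\right)$ is comparable, within a factor of $2$, to $\frac{1-\abs{\lambda}}{\abs{1-\lambda}^2}$. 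Raising to the negative power $-(d+1)$ and substituting into the sandwich above yields the claim with, for instance, $a = 2^{-(d+1)}C_\beta$ and $b = C_\alpha$ (enlarging $b$ if necessary to guarantee $a < b$).

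I do not expect any serious obstacle here: the lemma is essentially an assembly of monotonicity, the closed form on the model parabolas, and one elementary half-plane computation. The only places demanding care are tracking the reversal of the inequalities caused by the negative exponent $-(d+1)$, and confirming at the outset that the base point $p_\lambda$ indeed sits in $\Pc_\alpha$ so that both monotonicity inequalities apply simultaneously.
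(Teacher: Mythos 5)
Your proposal is correct and follows essentially the same route as the paper: sandwich $\kappa_D$ between $\kappa_{\Pc_\beta}$ and $\kappa_{\Pc_\alpha}$ via Bergman-kernel monotonicity, apply the explicit formula from Observation~\ref{obs:bergman_kernel_parabola}, and convert ${\rm Re}\left(\frac{1+\lambda}{1-\lambda}\right)$ to $\frac{1-\abs{\lambda}}{\abs{1-\lambda}^2}$ up to a factor of $2$. The explicit constants $a = 2^{-(d+1)}C_\beta$ and $b = C_\alpha$ check out, and the handling of the inequality reversal under the exponent $-(d+1)$ is correct.
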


\begin{proof} Since $\Omega \subset \Pc_\beta$, Proposition~\ref{prop:bergman_monotone} and Observation~\ref{obs:bergman_kernel_parabola}
 imply that there exists a constant  $C_\beta > 0$ such that 
\begin{align*}
C_\beta\left({\rm Re}(z)\right)^{-(d+1)} = K_{\Pc_\beta}(z,0,\dots,0) \leq K_D(z,0,\dots,0) 
\end{align*}
for all $z \in \Hc$. Further 
\begin{align*}
\frac{1-\abs{\lambda}}{\abs{1-\lambda}^2} \leq {\rm Re}\left( \frac{1+\lambda}{1-\lambda}\right) \leq 2\frac{1-\abs{\lambda}}{\abs{1-\lambda}^2}
\end{align*}
for all $\lambda \in \Db$. Combining these two estimates provides the lower bound:
\begin{align*}
C_\beta\left(\frac{1-\abs{\lambda}}{\abs{1-\lambda}^2} \right)^{-(d+1)} \leq \kappa_D \left(  \frac{1+\lambda}{1-\lambda},0,\dots,0 \right)
\end{align*}
The same argument with $\Pc_\alpha \subset \Omega$ yields the upper bound. 
\end{proof}

\begin{lemma} $\Omega$ is biholomorphic to the unit ball. \end{lemma}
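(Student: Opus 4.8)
The plan is to combine the two-sided estimate on $\kappa_D$ from Lemma~\ref{lem:D_bergman_estimate} with the exact formula for the Bergman kernel on $\Omega_{HC}$ from Theorem~\ref{thm:bergman_BSD}, applied to the isometric embedding $\Phi_{HC} = F \circ \Phi$, in order to control $\abs{J(\lambda)}^2$ via Equation~\eqref{eq:J_change_of_variable}. The strategy is to assume that $\Omega$ is \emph{not} biholomorphic to the ball, so that the real rank $r$ satisfies $r \geq 2$, and then show that $J$ must extend continuously to $\partial \Db$ with boundary values identically zero, contradicting the fact that $J$ is nowhere vanishing (hence the maximum principle would force $J \equiv 0$).

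First I would apply Theorem~\ref{thm:bergman_BSD} to the embedding $\Phi_{HC}$, which satisfies $\Phi_{HC}(0)=0$, to obtain constants $p, C > 0$ with
\begin{align*}
\kappa_{\Omega_{HC}}\left( \Phi_{HC}(\lambda,\dots,\lambda) \right) = C \left(1 - \abs{\lambda}^2\right)^{-rp},
\end{align*}
where crucially $p \geq (d+r)/r$, so $rp \geq d+r$. Substituting this and the bounds from Lemma~\ref{lem:D_bergman_estimate} into Equation~\eqref{eq:J_change_of_variable} gives, up to positive multiplicative constants,
\begin{align*}
\abs{J(\lambda)}^2 \asymp \left(\frac{1-\abs{\lambda}}{\abs{1-\lambda}^2}\right)^{d+1} \left(1-\abs{\lambda}^2\right)^{rp}.
\end{align*}
Since $1-\abs{\lambda}^2 = (1-\abs{\lambda})(1+\abs{\lambda}) \asymp 1 - \abs{\lambda}$ near the boundary, the dominant factor is $(1-\abs{\lambda})^{\,d+1+rp}$ divided by a power of $\abs{1-\lambda}^2$. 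The key arithmetic point is that when $r \geq 2$ we have $rp \geq d+r > d+1$, so the exponent $d+1+rp$ of the vanishing factor $1-\abs{\lambda}$ strictly exceeds the order of any blow-up coming from $\abs{1-\lambda}^{-2(d+1)}$; I would check that as $\lambda \to \zeta$ for any $\zeta \in \partial\Db$ (including the delicate corner point $\zeta = 1$, where $\abs{1-\lambda}$ also tends to $0$) the product tends to $0$.

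The main obstacle is precisely the behavior at $\zeta = 1$, where both $1-\abs{\lambda} \to 0$ and $\abs{1-\lambda} \to 0$ simultaneously, so one cannot naively conclude. Here I would use the elementary inequality $1 - \abs{\lambda} \leq \abs{1 - \lambda}$, valid for all $\lambda \in \Db$, to write $\frac{1-\abs{\lambda}}{\abs{1-\lambda}^2} \leq \frac{1}{\abs{1-\lambda}}$ and balance the exponents; since the total power of $(1-\abs{\lambda})$ exceeds the total power of $\abs{1-\lambda}^{-1}$ by the strict inequality $rp > d+1$, a short computation shows the expression still vanishes as $\lambda \to 1$. Thus $J$ extends continuously to $\overline{\Db}$ with $J\vert_{\partial\Db} \equiv 0$, so by the maximum principle $J \equiv 0$, contradicting that $J$ is nowhere zero. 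This forces $r = 1$, and hence $\Omega$ is biholomorphic to the unit ball, completing the proof.
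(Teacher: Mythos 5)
Your overall strategy is exactly the paper's: assume $r\geq 2$, combine Equation~\eqref{eq:J_change_of_variable} with Theorem~\ref{thm:bergman_BSD} and Lemma~\ref{lem:D_bergman_estimate}, show $J$ vanishes on $\partial\Db$, and contradict the maximum principle. The conclusion and the final exponent $rp-(d+1)\geq r-1>0$ are right. However, your central displayed asymptotic has a sign error: since $\kappa_D$ sits in the \emph{numerator} of Equation~\eqref{eq:J_change_of_variable} and Lemma~\ref{lem:D_bergman_estimate} gives $\kappa_D \asymp \left(\frac{1-\abs{\lambda}}{\abs{1-\lambda}^2}\right)^{-(d+1)}$, the correct statement is
\begin{align*}
\abs{J(\lambda)}^2 \asymp \left(\frac{1-\abs{\lambda}}{\abs{1-\lambda}^2}\right)^{-(d+1)}\left(1-\abs{\lambda}^2\right)^{rp} = \abs{1-\lambda}^{2(d+1)}\,(1+\abs{\lambda})^{rp}\,(1-\abs{\lambda})^{rp-(d+1)},
\end{align*}
with exponent $-(d+1)$, not $d+1$. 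With the correct sign the factor $\abs{1-\lambda}^{2(d+1)}$ appears with a \emph{positive} power and is simply bounded on $\Db$, so the ``delicate corner point $\zeta=1$'' you devote a paragraph to is a non-issue: the bound $\abs{J(\lambda)}^2 \leq C(1-\abs{\lambda})^{rp-(d+1)} \leq C(1-\abs{\lambda})^{r-1}$ holds uniformly and tends to $0$ as $\abs{\lambda}\to 1$, with no case analysis on where $\lambda$ approaches the circle. Your workaround via $1-\abs{\lambda}\leq\abs{1-\lambda}$ happens to land on the same final exponent, so the argument is salvageable, but as written the displayed formula is false and the subsequent discussion analyzes the wrong quantity; fix the sign and the proof collapses to the paper's one-line estimate.
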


\begin{proof}
By Equation~\eqref{eq:J_change_of_variable}, Theorem~\ref{thm:bergman_BSD}, and Lemma~\ref{lem:D_bergman_estimate} there exist constants $C>0$ and $p\geq (d+r)/r$ such that
\begin{align*}
\abs{J(\lambda)}^2 & \leq C\left(\frac{1-\abs{\lambda}}{\abs{1-\lambda}^2} \right)^{-(d+1)}\left( 1-\abs{\lambda}^2 \right)^{rp}  \leq C\left(\frac{1-\abs{\lambda}}{\abs{1-\lambda}^2} \right)^{-(d+1)}\left( 1-\abs{\lambda}^2 \right)^{d+r} \\
& \leq C \abs{1-\lambda}^{2(d+1)} (1+\abs{\lambda})^{d+r} (1-\abs{\lambda})^{r-1}
\end{align*}
for all $\lambda \in \Db$. 

So if $r > 1$, then $J$ extends continuously to $\partial \Db$ and $J|_{\partial \Db} \equiv 0$. Then by the maximal principle, $J \equiv 0$. But this contradicts the fact that $J$ is nowhere vanishing. So we must have $r=1$ and hence $\Omega$ is biholomorphic to the unit ball. 

\end{proof}
 
\section{Proof of Theorem~\ref{thm:main}}

Theorem~\ref{thm:main} is an immediate consequence of Proposition~\ref{prop:BSD} and Theorem~\ref{thm:BSD_reg}.

\bibliographystyle{alpha}
\bibliography{complex_kob}

\end{document}